\documentclass{article}
\usepackage{amsmath, amssymb, amsthm, graphicx, hyperref}
\usepackage{enumitem}
\usepackage{listings}
\usepackage{xcolor}
\usepackage{epstopdf}   

\newtheorem{theorem}{Theorem}[section]
\newtheorem{proposition}[theorem]{Proposition}
\newtheorem{corollary}[theorem]{Corollary}
\newtheorem{lemma}[theorem]{Lemma}
\newtheorem{example}[theorem]{Example}
\theoremstyle{remark}
\newtheorem{remark}[theorem]{Remark}
\newtheorem{definition}[theorem]{Definition}

\title{Operator Michelson Contrast: Logarithmic Parametrisation, Subadditivity, and a Sharp Noncommutativity Threshold}
\author{Irina Nikolaeva\thanks{Lean Technologies Laboratory, Kazan, Tatarstan, 420000, Krasnokokshayskaia 189a-202} \and Andrei Novikov\thanks{Novikov Laboratories, Kazan, Tatarstan, 420124, Absaliamova 14-124}}
\date{}

\providecommand{\QLC}{\operatorname{OLC}}
\providecommand{\QMC}{\operatorname{OMC}}
\providecommand{\MC}{\operatorname{MC}}
\providecommand{\Emax}{E_{\max}}
\providecommand{\Emin}{E_{\min}}

\begin{document}
\maketitle

\begin{abstract}
Building on the operator Michelson contrast introduced in our previous work for positive operators, we investigate its natural logarithmic parametrisation. For a positive invertible operator $A$, the operator Michelson contrast is $\Delta(A) = (\kappa(A)-1)/(\kappa(A)+1)$, where $\kappa(A) = \|A\|\|A^{-1}\|$ is the condition number. Its logarithmic lift is $\operatorname{arctanh}(\Delta(A)) = \tfrac12\ln\kappa(A)$. We call this quantity the Operator Logarithmic Contrast (OLC), and extend it to all invertible operators via the polar decomposition. We prove that QLC is subadditive under multiplication: $\QLC(AB)\le\QLC(A)+\QLC(B)$. For positive invertible operators, we establish an exact geometric identification: $\QLC(A)$ equals the projective Thompson distance from the identity ray to the ray generated by $A$. We prove a sharp dimensional threshold for equality in the subadditivity law: for $2\times2$ and $3\times3$ positive matrices, equality forces commutativity, while $n=4$ is the smallest dimension in which noncommuting equality examples exist; we further show this threshold is intrinsically about tensor-irreducibility: equality still forces commutativity in any total ambient dimension, provided the operators factor as pure Kronecker products $X=X_1\otimes\cdots\otimes X_k$ and $Y=Y_1\otimes\cdots\otimes Y_k$ with every factor $X_i,Y_i$ of dimension at most $3$. We derive contraction properties, sum inequalities, trace estimates for density operators, a Lipschitz-continuity property (including at singular limits), and limiting behaviour for Ces\`aro means and infinite products. As a worked example class, we specialize to density operators: we obtain an exact closed-form bijection between OMC and von Neumann entropy for qubits, prove that no such bijection exists for $d\ge3$, and establish explicit upper and lower envelopes for the entropy at fixed condition number, showing that the admissible entropy range $[S_{\min}(\kappa), S_{\max}(\kappa)]$ satisfies $\ln 2 \le S_{\max}(\kappa) < \ln 3$ for every $\kappa>1$. We also show that the qubit case recovers, as an operator lift, the classical equivalence between Michelson contrast and Jensen--Shannon divergence established by Bruni, Rossi, and Vitulano \cite{bruni2012}, and we give an explicit counterexample showing this equivalence provably fails to extend past $d=2$, as a direct consequence of the non-bijectivity above.
\end{abstract}

\noindent\textbf{2020 Mathematics Subject Classification.} Primary 47A10, 47B65, 47A30. Secondary 47B15, 53B20, 53C22, 81P45, 81Q10.

\noindent\textbf{Keywords.} Michelson contrast; logarithmic contrast; operator condition number; positive operators; Thompson metric; projective metric geometry; operator geometry; matrix inequalities; submultiplicativity; hyperbolic geometry; rapidity parametrisation; tensor products; von Neumann entropy.

\section{Introduction}

The classical Michelson contrast for two positive levels $0<m\le M$ is defined as
\[
\MC(M,m) = \frac{M-m}{M+m}.
\]
It satisfies the elementary identity
\[
\operatorname{arctanh}(\MC(M,m)) = \tfrac12\ln\frac{M}{m}.
\]
Thus the Michelson contrast may be viewed as a bounded parametrisation of the logarithmic ratio $M/m$. In logarithmic image processing, logarithmic representations of contrast have been developed; in particular, the logarithmic additive contrast (LAC) is closely related to the logarithmic ratio and the multiplicative structure of intensity levels. The works of Jourlin, Pinoli, and collaborators \cite{jourlin1988,jourlin1989} provide a classical foundation for this approach.

In a recent paper \cite{abed2024}, we introduced an operator analogue of the Michelson contrast for positive invertible operators $A$ on finite-dimensional spaces. The present work generalizes this construction to operators on arbitrary complex Hilbert spaces. For any invertible positive operator $A\in B(H)_{++}$ (where $B(H)$ denotes bounded operators on a Hilbert space $H$ of any dimension), define
\[
\Delta(A) = \frac{\|A\|-\|A^{-1}\|^{-1}}{\|A\|+\|A^{-1}\|^{-1}} = \frac{\kappa(A)-1}{\kappa(A)+1}, \qquad \kappa(A):=\|A\|\|A^{-1}\|.
\]
This operator Michelson contrast extends the classical formula by replacing the two scalar levels with the spectral supremum and infimum of the operator. It depends only on the condition number $\kappa(A)=\sup\sigma(A)/\inf\sigma(A)$, which is intrinsically dimension-independent.

The present paper investigates the logarithmic coordinate naturally associated with this operator Michelson contrast, in both finite and infinite dimensions:
\[
\QLC(A) := \operatorname{arctanh}(\Delta(A)) = \tfrac12\ln\kappa(A).
\]
We call this quantity the \emph{Operator Logarithmic Contrast} (OLC); the name is proposed in this work and is not an established term in the literature. It is the natural logarithmic coordinate for the operator Michelson contrast of \cite{abed2024}, and requires no physical interpretation to be well defined: it is a functional on invertible operators, studied here purely as such. Where the paper later specializes to density operators (Section \ref{sec:applications}), that specialization is presented as a worked example class -- a convenient, well-studied family of positive operators on which the general theory can be illustrated concretely -- rather than as the paper's central claim.

This logarithmic parametrisation reveals several new properties, which we develop first in complete generality for any Hilbert space, then specialize to finite dimensions where finer structure becomes visible. First, unlike the classical logarithmic ratio which is additive under multiplication of scalar ratios, the operator version satisfies only a subadditivity inequality, valid universally:
\[
\QLC(AB) \le \QLC(A) + \QLC(B),
\]
which follows from the submultiplicativity of the condition number and holds for all invertible operators in any Hilbert space. This inequality extends to limit-theoretic contexts: for convergent infinite products and Ces\`aro means of operator sequences, analogous inequalities hold by continuity and weak-operator topology arguments (Remark 4.2, Section \ref{sec:inf-dim}). Second, we prove a geometric identification: for positive invertible operators, $\QLC(A) = \delta_T([I],[A])$, where $\delta_T$ is the projective Thompson metric on the positive cone; this immediately yields a Lipschitz-continuity property for OLC that we make explicit below. Third, the equality case in the subadditivity inequality leads to an alignment condition for the extremal eigenspaces: $n=4$ is the smallest dimension in which noncommuting equality cases can occur, and we show this threshold is really a statement about tensor-irreducibility rather than raw ambient dimension. Applied to quantum information, these results give an exact closed-form relation between OMC and von Neumann entropy for qubits, and a proof, with explicit and now fully rigorous bounds, that no such relation exists in higher dimension.

The paper is organised as follows. Section 2 fixes notation and definitions, extending OLC to all invertible operators via the polar decomposition. Section 3 recalls the Thompson metric, proves the geometric identification theorem, and derives the continuity properties of OLC. Section 4 contains the linear-algebraic core: subadditivity, the equality case and dimensional threshold, tensor-product behaviour, power laws, contraction, sum inequalities, trace estimates, and double-sided products. Section 5 works out density operators as an example class. Section 6 discusses structural interpretations, open questions, and concludes.

\section{Definitions and Notation}

Let $H$ be a complex Hilbert space and $B(H)$ the algebra of all bounded operators. Let $B(H)_{++}$ and $B(H)_+$ denote the positive invertible and positive (including singular) operators, respectively.

For invertible $A\in B(H)$, the condition number is $\kappa(A):=\|A\|\|A^{-1}\|$. We recall the standard facts (see \cite{dixmier1969}):
\[
\kappa(A)\ge1,\qquad \kappa(AB)\le\kappa(A)\kappa(B),
\]
with equality in the first iff $A$ is a scalar multiple of a unitary operator. For $A\in B(H)_{++}$, $\kappa(A) = \sup\sigma(A)/\inf\sigma(A)$.

\subsection{The Operator Michelson Contrast}
In \cite{abed2024}, the operator Michelson contrast for positive operators was defined as
\[
\Delta(A) = \begin{cases} \dfrac{\|A\|-\|A^{-1}\|^{-1}}{\|A\|+\|A^{-1}\|^{-1}}, & A\in B(H)_{++},\\[2mm] 1, & A \text{ singular}. \end{cases}
\]
For invertible positive $A$, this equals $(\kappa(A)-1)/(\kappa(A)+1)$.

\subsection{Logarithmic Parametrisation}

\begin{definition}\label{def:qmc}
For any invertible $A\in B(H)$, define the \emph{Operator Michelson Contrast} (OMC) as the functional extension of $\Delta$:
\[
\QMC(A) := \frac{\kappa(A)-1}{\kappa(A)+1},
\]
and its logarithmic parametrisation, the \emph{Operator Logarithmic Contrast} (OLC), by
\[
\QLC(A) := \operatorname{arctanh}(\QMC(A)) = \tfrac12\ln\kappa(A).
\]
For singular nonzero operators, we may extend these quantities by the conventions $\QMC(A)=1$ and $\QLC(A)=+\infty$. The zero operator is excluded from this extended convention.
\end{definition}

For positive invertible $A$, this coincides with $\Delta(A)$ from \cite{abed2024}. By the geometric identification of Theorem \ref{thm:thompson} below, for positive invertible $A$, $\QLC(A)=\delta_T([I],[A])$.

\subsection{Extension to All Invertible Operators via Polar Decomposition}
Since $\kappa(A)=\kappa(|A|)$ for any invertible $A$ (polar decomposition $A=U|A|$), we have
\[
\QLC(A) = \tfrac12\ln\kappa(A) = \tfrac12\ln\kappa(|A|) = \QLC(|A|) = \delta_T([I],[|A|]).
\]
Thus the logarithmic contrast functional OLC on arbitrary invertible operators is obtained by pulling back the projective Thompson distance-to-identity functional through the absolute value map $A\mapsto|A|$. This is a natural extension of the functional, not of the metric itself, since the projective Thompson metric is originally defined only on the positive cone.

\begin{remark}\label{rem:polar}
This reduction is valid only for statements about $\kappa$, OMC, or OLC themselves; it does not transfer commutativity statements. If $X=\sigma_x$, $Y=\sigma_y$ are Pauli matrices, then $|X|=|Y|=I$ commute trivially, while $XY=i\sigma_z=-YX$ are maximally noncommuting. The dimensional-threshold result of Theorem \ref{thm:threshold} is intrinsically a statement about positive operators $X,Y$, using their own extremal eigenspaces $\Emax,\Emin$; it does not extend to general invertible operators via $|X|,|Y|$. See also Remark \ref{rem:entanglement}.
\end{remark}

Both quantities are invariant under positive scaling: for $\lambda>0$, $\QMC(\lambda A)=\QMC(A)$, $\QLC(\lambda A)=\QLC(A)$.

\section{Preliminaries: The Thompson Metric, Geometric Interpretation, and Continuity}

The Thompson metric on $B(H)_{++}$ (see \cite{thompson1963,corach1994}) is
\[
d_T(A,B) := \|\log(A^{-1/2}BA^{-1/2})\|.
\]
The projective version on rays $[A]=\{\lambda A:\lambda>0\}$ is
\[
\delta_T([A],[B]) := \inf_{\lambda>0} d_T(A,\lambda B).
\]
Further background on the Thompson metric and its relation to projective metrics can be found in \cite{lawson2001}.

\begin{theorem}\label{thm:thompson}
For any $A\in B(H)_{++}$,
\[
\delta_T([I],[A]) = \tfrac12\ln\kappa(A). \tag{1}
\]
\end{theorem}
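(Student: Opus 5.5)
We compute directly from the definitions. With $A$ fixed in $B(H)_{++}$ and $\lambda>0$, we have $d_T(I,\lambda A)=\|\log(\lambda A)\|$, since $I^{-1/2}(\lambda A)I^{-1/2}=\lambda A$. Set $m=\inf\sigma(A)>0$ and $M=\sup\sigma(A)=\|A\|$, so that $\kappa(A)=M/m$.

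By the continuous functional calculus for the self-adjoint operator $\log(\lambda A)$, its spectrum is $\{\ln\lambda+\ln t: t\in\sigma(A)\}$. Its norm equals its spectral radius, so
\[
\|\log(\lambda A)\| = \max\{|\ln\lambda+\ln M|,\ |\ln\lambda+\ln m|\}.
\]
This step uses that $m$ and $M$ actually belong to $\sigma(A)$. That holds because $\sigma(A)$ is compact, and it remains true in infinite dimensions, where $m$ and $M$ need not be eigenvalues.

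The problem then reduces to a one-variable minimisation. Write $s=\ln\lambda$, which ranges over all of $\mathbb{R}$, and set $a=\ln m\le b=\ln M$. We minimise $f(s)=\max\{|s+a|,|s+b|\}$. For every $s$,
\[
f(s)\ge \tfrac12\bigl(|s+b|+|s+a|\bigr)\ge \tfrac12(b-a).
\]
Equality holds at $s=-(a+b)/2$, that is, at $\lambda=(mM)^{-1/2}$. The infimum is therefore a minimum and equals $\tfrac12(\ln M-\ln m)=\tfrac12\ln\kappa(A)$, which gives (1).

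None of these steps presents a serious obstacle. The one point needing care is the spectral-norm identity in infinite dimensions. It requires that the extreme points of $\sigma(\log(\lambda A))$ are the images of $m$ and $M$, which follows from the spectral mapping theorem together with the monotonicity of $\ln$.
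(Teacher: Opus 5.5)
Your proof is correct and follows the paper's argument: reduce $d_T(I,\lambda A)$ to $\|\log(\lambda A)\|=\max\{|\ln(\lambda m)|,|\ln(\lambda M)|\}$ and minimise over $\lambda>0$, with the minimum attained at $\lambda=(mM)^{-1/2}$. Your version also justifies two steps the paper leaves implicit: that $m,M\in\sigma(A)$ (via spectral mapping and compactness), and that $\tfrac12(b-a)$ is a lower bound for the maximum via the averaging inequality.
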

\begin{proof}
Let $m=\inf\sigma(A)$, $M=\sup\sigma(A)$. Then $\kappa(A)=M/m$. For any $\lambda>0$,
\[
d_T(I,\lambda A) = \|\log(\lambda A)\| = \max\{|\log(\lambda m)|,|\log(\lambda M)|\}.
\]
Minimising over $\lambda>0$ gives $\lambda_*=(mM)^{-1/2}$ and the minimum equals $\tfrac12\ln(M/m)=\tfrac12\ln\kappa(A)$.
\end{proof}

Thus, for positive invertible operators, $\QLC(A)=\tfrac12\ln\kappa(A)$ is exactly the distance from the identity ray to the ray generated by $A$ in the projective Thompson metric.

\subsection{Continuity of OLC}

Theorem \ref{thm:thompson} has an immediate and useful consequence: since OLC \emph{is} a distance-to-a-fixed-point functional on a metric space, it inherits a Lipschitz bound for free from the triangle inequality. We record this, together with a companion statement showing that the singular convention $\QLC=+\infty$ of Definition \ref{def:qmc} is not an arbitrary choice but the actual limiting value of OLC as an invertible operator degenerates.

\begin{corollary}[1-Lipschitz continuity w.r.t.\ the Thompson metric]\label{cor:lipschitz}
For all $A,B\in B(H)_{++}$,
\[
|\QLC(A)-\QLC(B)| \le \delta_T([A],[B]).
\]
\end{corollary}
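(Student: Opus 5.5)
The plan is to use Theorem \ref{thm:thompson} to write $\QLC(A)=\delta_T([I],[A])$ and $\QLC(B)=\delta_T([I],[B])$. The claim then becomes the reverse triangle inequality for the distance-to-a-point functional $[X]\mapsto\delta_T([I],[X])$. The only real content is therefore that $\delta_T$ satisfies the triangle inequality on rays (and is symmetric), which I would verify from the definition as an infimum of $d_T$.

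First, I would recall that $d_T$ is a genuine metric on $B(H)_{++}$, by the standard references \cite{thompson1963,corach1994}. In particular it satisfies the triangle inequality and is invariant under congruences, $d_T(CAC^*,CBC^*)=d_T(A,B)$ for invertible $C$. A useful consequence is homogeneity under common scaling: $d_T(\mu A,\mu B)=d_T(A,B)$ for $\mu>0$. I would also note symmetry, $d_T(A,B)=d_T(B,A)$; this holds because $A^{-1/2}BA^{-1/2}$ and $B^{-1/2}AB^{-1/2}$ have inverse-related spectra, so the norms of their logarithms agree.

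Second, I would prove the triangle inequality for $\delta_T$. For $\lambda,\mu>0$,
\[
d_T(I,\lambda\mu B)\le d_T(I,\lambda A)+d_T(\lambda A,\lambda\mu B)=d_T(I,\lambda A)+d_T(A,\mu B).
\]
Taking the infimum first over $\mu$ and then over $\lambda$ gives
\[
\delta_T([I],[B])\le\delta_T([I],[A])+\delta_T([A],[B]).
\]
Here the left side is bounded because $\lambda\mu$ ranges over all positive scalars. Exchanging the roles of $A$ and $B$, and using the symmetry $\delta_T([A],[B])=\delta_T([B],[A])$ (which follows from the symmetry of $d_T$ together with $d_T(A,\mu B)=d_T(\mu^{-1}A,B)$), gives the other inequality. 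Combining the two yields $|\QLC(A)-\QLC(B)|\le\delta_T([A],[B])$.

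The main point requiring care is the infimum bookkeeping in the projective metric: the ray distance must be a pseudometric, which needs both the scaling invariance of $d_T$ and its symmetry. If one prefers to avoid citing the triangle inequality for $d_T$, the explicit form $d_T(A,B)=\max\{\ln M(B/A),\ln M(A/B)\}$ can be used instead, where $M(B/A)=\inf\{t:B\le tA\}$. Then $\delta_T([A],[B])=\tfrac12\ln\bigl(M(B/A)M(A/B)\bigr)$, and the submultiplicativity $M(C/A)\le M(C/B)M(B/A)$ gives the inequality directly. Since $\QLC(A)=\tfrac12\ln\bigl(M(A/I)M(I/A)\bigr)$, this route is a direct fallback.
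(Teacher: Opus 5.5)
Your proposal is correct and follows the same route as the paper: identify $\QLC(A)=\delta_T([I],[A])$ via Theorem~\ref{thm:thompson}, then apply the reverse triangle inequality for the projective Thompson distance. The paper simply asserts that $\delta_T$ is a metric on rays, whereas you verify the pseudometric properties actually needed (the triangle inequality via scaling invariance of $d_T$, and symmetry) directly from the definition, which is a welcome but inessential addition.
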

\begin{proof}
By Theorem \ref{thm:thompson}, $\QLC(A)=\delta_T([I],[A])$ and $\QLC(B)=\delta_T([I],[B])$. Since $\delta_T$ is a metric on rays, the triangle inequality gives $\delta_T([I],[A]) \le \delta_T([I],[B]) + \delta_T([B],[A])$ and symmetrically, so $|\delta_T([I],[A])-\delta_T([I],[B])| \le \delta_T([A],[B])$.
\end{proof}

In particular, on any subset of $B(H)_{++}$ on which $\delta_T$ is comparable to the operator norm (e.g.\ a norm-bounded set with condition numbers bounded away from the singular boundary), Corollary \ref{cor:lipschitz} yields ordinary norm-Lipschitz continuity of OLC.

\begin{proposition}[Continuity at singular limits]\label{prop:singular-cont}
Let $A_n\in B(H)$ be invertible with $A_n\to A$ in operator norm, where $A\neq0$ is singular (not necessarily positive). Then $\kappa(A_n)\to+\infty$, hence $\QLC(A_n)\to+\infty=\QLC(A)$ under the extended convention of Definition \ref{def:qmc}.
\end{proposition}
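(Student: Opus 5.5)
The plan is to split $\kappa(A_n)=\|A_n\|\,\|A_{n}^{-1}\|$ into its two factors and control each separately. The first factor is easy: by norm continuity $\|A_n\|\to\|A\|$, and $\|A\|>0$ because $A\neq0$. Hence there is $N_0$ with $\|A_n\|\ge \tfrac12\|A\|>0$ for all $n\ge N_0$. The whole proof therefore reduces to showing $\|A_n^{-1}\|\to+\infty$. Once that holds, $\kappa(A_n)\ge \tfrac12\|A\|\,\|A_n^{-1}\|\to+\infty$, and $\QLC(A_n)=\tfrac12\ln\kappa(A_n)\to+\infty$, which is the value assigned to $A$ by the extended convention of Definition \ref{def:qmc}. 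No positivity is used anywhere, so the statement holds for general (non-positive) $A_n$ and $A$.

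For the key step I would argue by contradiction using the standard openness of the invertible group, via a Neumann series. Suppose $\|A_n^{-1}\|\not\to\infty$. Then along some subsequence $\|A_{n_k}^{-1}\|\le C$ for a fixed $C<\infty$. Write
\[
A = A_{n_k}\bigl(I + A_{n_k}^{-1}(A-A_{n_k})\bigr).
\]
Since $\|A_{n_k}^{-1}(A-A_{n_k})\|\le C\|A-A_{n_k}\|\to0$, for large $k$ this norm is $<1$. The bracket is then invertible in $B(H)$ by the Neumann series. So $A$ is a product of two invertible operators, hence invertible, contradicting singularity.

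Equivalently, the argument yields the quantitative bound $\|A_n^{-1}\|\ge 1/\|A-A_n\|$ whenever $A_n$ is invertible and $A$ is not, since otherwise the Neumann series would apply. This gives the divergence directly.

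The only point needing care, which I regard as the main (mild) obstacle, is the meaning of "singular" in infinite dimensions. It must be read as "not invertible in $B(H)$", so it includes injective operators with dense but non-closed range. It is not enough to read it as "has nontrivial kernel". The Neumann-series argument uses only non-invertibility, so it covers every case. In finite dimensions one could alternatively argue through the continuity of the smallest singular value $s_{\min}(A_n)=\|A_n^{-1}\|^{-1}\to s_{\min}(A)=0$. I would mention this as a remark, but base the proof on the dimension-free argument above.
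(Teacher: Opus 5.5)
Your proof is correct and takes the same route as the paper: $\|A_n\|\to\|A\|>0$ controls the first factor, and a subsequence with $\|A_{n_k}^{-1}\|\le C$ would make $A=A_{n_k}\bigl(I+A_{n_k}^{-1}(A-A_{n_k})\bigr)$ invertible by the Neumann series, contradicting singularity. Your explicit bound $\|A_n^{-1}\|\ge 1/\|A-A_n\|$ and your point that ``singular'' has to mean ``not invertible in $B(H)$'' are both accurate small refinements of that argument.
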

\begin{proof}
Norm convergence gives $\|A_n\|\to\|A\|$, and $\|A\|>0$ since $A\neq0$; in particular $\|A_n\|$ is bounded away from $0$ for large $n$. It remains to show $\|A_n^{-1}\|\to\infty$. Suppose not: then along a subsequence $\|A_n^{-1}\|\le M$ for some $M<\infty$. Since $A_n\to A$, for $n$ large enough $\|A_n-A\|<1/M$, and the standard Neumann-series perturbation bound for invertible operators shows that $A = A_n - (A_n-A) = A_n(I - A_n^{-1}(A_n-A))$ is invertible whenever $\|A_n^{-1}(A_n-A)\|\le\|A_n^{-1}\|\|A_n-A\|<1$, contradicting the singularity of $A$. Hence $\|A_n^{-1}\|\to\infty$ along every subsequence, so $\kappa(A_n)=\|A_n\|\|A_n^{-1}\|\to\infty$, and $\QLC(A_n)=\tfrac12\ln\kappa(A_n)\to+\infty$.
\end{proof}

Thus the extended convention $\QLC(A)=+\infty$ for nonzero singular $A$ is exactly the value forced by continuity from the invertible operators, not an independent stipulation.

\begin{remark}[An intermediate regime between finite dimensions and general $B(H)$]\label{rem:infinite-dim}
The dimensional-threshold results of Section \ref{sec:threshold} (Lemma \ref{lem:align} and Theorem \ref{thm:threshold}) are stated, and proved, only for finite-dimensional $X,Y\in M_n(\mathbb{C})_{++}$; the argument uses genuinely finite-dimensional facts (eigenspaces of finite multiplicity, orthogonal-complement invariance). A natural intermediate setting, between this finite-dimensional case and the fully general $B(H)_{++}$ for which only the metric identification of Theorem \ref{thm:thompson} is available, is that of positive invertible operators $X=I+K_X$, $Y=I+K_Y$ with $K_X,K_Y$ compact, or more generally operators whose spectrum accumulates only at points other than $\sup\sigma$ and $\inf\sigma$ (so that $\Emax$ and $\Emin$ are again finite-dimensional eigenspaces, as in the finite-dimensional proof). In that setting, the proof of Lemma \ref{lem:align} goes through essentially unchanged, since it only uses that vectors realising the operator norm exist and lie in a well-defined maximal/minimal eigenspace, which compactness of $K_X,K_Y$ guarantees at the extremal points of the spectrum. We do not carry out the corresponding case analysis of Theorem \ref{thm:threshold} in this setting, since the orthogonal-complement argument used there for $n=3$ requires some care when $\Emax,\Emin$ are infinite-dimensional or when the ``middle'' spectral subspace is infinite-dimensional; we record this as a natural direction for future work rather than a result of the present paper.
\end{remark}

\section{Core Linear-Algebraic Statements}

\subsection{Subadditivity and Composition Laws}

\begin{theorem}[Subadditivity in infinite and finite dimensions]\label{thm:subadd}
For any invertible $A,B\in B(H)$ (where $H$ is any complex Hilbert space, finite or infinite-dimensional),
\[
\QLC(AB) \le \QLC(A)+\QLC(B). \tag{2}
\]
Equivalently,
\[
\QMC(AB) \le \frac{\QMC(A)+\QMC(B)}{1+\QMC(A)\QMC(B)}. \tag{3}
\]
\end{theorem}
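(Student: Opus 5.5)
The plan is to reduce (2) directly to submultiplicativity of the condition number and then transport it to (3) through the $\tanh$ addition formula. First, for invertible $A,B\in B(H)$ the product $AB$ is invertible with $(AB)^{-1}=B^{-1}A^{-1}$. Submultiplicativity of the operator norm gives
\[
\|AB\|\le\|A\|\|B\|,\qquad \|(AB)^{-1}\|\le\|B^{-1}\|\|A^{-1}\|.
\]
Multiplying these yields $\kappa(AB)\le\kappa(A)\kappa(B)$, the standard fact recalled in Section 2. No finite-dimensionality is used here, so the argument is uniform in $H$.

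Since $\ln$ is increasing and all condition numbers are at least $1$, taking $\tfrac12\ln$ of both sides gives
\[
\QLC(AB)=\tfrac12\ln\kappa(AB)\le\tfrac12\ln\kappa(A)+\tfrac12\ln\kappa(B)=\QLC(A)+\QLC(B).
\]
This is (2) by Definition \ref{def:qmc}.

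For the equivalence with (3), I would apply the strictly increasing map $\tanh$ on $[0,\infty)$ to both sides of (2), using $\QMC=\tanh(\QLC)$. The addition formula
\[
\tanh(a+b)=\frac{\tanh a+\tanh b}{1+\tanh a\tanh b}
\]
converts the right-hand side into $(\QMC(A)+\QMC(B))/(1+\QMC(A)\QMC(B))$. Conversely, $\operatorname{arctanh}$ is increasing on $[0,1)$, so (3) implies (2). Both quantities lie in the correct domains, since $\QMC\in[0,1)$ for invertible operators.

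I expect no real obstacle. The only point needing care is confirming that the equivalence is genuinely two-way, which follows from strict monotonicity of $\tanh$ and $\operatorname{arctanh}$ on the relevant ranges. A direct check is also possible: substituting $\QMC=(\kappa-1)/(\kappa+1)$, the right-hand side of (3) simplifies to $(\kappa(A)\kappa(B)-1)/(\kappa(A)\kappa(B)+1)$. Then (3) is simply monotonicity of $t\mapsto(t-1)/(t+1)$ applied to $\kappa(AB)\le\kappa(A)\kappa(B)$.
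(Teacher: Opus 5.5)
Your proof is correct and is the paper's argument: the paper also uses $\kappa(AB)\le\kappa(A)\kappa(B)$ (which you derive from $\|AB\|\le\|A\|\|B\|$ and $(AB)^{-1}=B^{-1}A^{-1}$), takes logarithms to get (2), and applies $\tanh$ to get (3). Your additional points add detail the paper leaves implicit: the two-way equivalence via monotonicity of $\tanh$ and $\operatorname{arctanh}$, and the direct check that the right-hand side of (3) equals $(\kappa(A)\kappa(B)-1)/(\kappa(A)\kappa(B)+1)$.
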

\begin{proof}
The proof is identical in finite and infinite dimensions: use the submultiplicativity $\kappa(AB)\le\kappa(A)\kappa(B)$, which holds for any bounded invertible operators. Taking logarithms gives (2); applying $\tanh$ yields (3).
\end{proof}

Thus OLC is subadditive rather than additive. Strict inequality can occur even for commuting operators if their extremal spectral directions are not aligned; see below.

\begin{remark}[Convergence and infinite products in limit-theoretic setting]
Theorem \ref{thm:subadd} immediately implies: if $A_1, A_2, A_3, \ldots\in B(H)_{++}$ is a sequence of invertible positive operators and the partial products $P_N:=A_1A_2\cdots A_N$ converge in operator norm to an invertible limit $P$, then
\[
\QLC(P) \le \liminf_{N\to\infty}\sum_{k=1}^N\QLC(A_k).
\]
If the series $\sum_{k=1}^\infty\QLC(A_k)$ converges, then $\QLC(P)\le\sum_{k=1}^\infty\QLC(A_k)$. This applies equally in infinite-dimensional Hilbert spaces: the operator-norm convergence and the continuity of OLC (Corollary \ref{cor:lipschitz}, Proposition \ref{prop:singular-cont}) require no assumption on the dimension of $H$. 

Similarly, for Ces\`aro means $S_N=\tfrac1N\sum_{k=1}^N A_k$ of a sequence of positive invertible operators, the sum inequality (9) gives $\QLC(S_N)\le\max_k\QLC(A_k)$, and if $S_N$ converges to $A$ invertible in operator norm, then $\QLC(A)\le\sup_k\QLC(A_k)$ by continuity.
\end{remark}

\subsection{Equality Case and Dimensional Threshold}\label{sec:threshold}

For finite-dimensional positive matrices $X,Y\in M_n(\mathbb{C})_{++}$, equality in (2) is equivalent to
\[
\|XY\| = \|X\|\|Y\|, \qquad \|(XY)^{-1}\| = \|X^{-1}\|\|Y^{-1}\|. \tag{4}
\]

\begin{lemma}[Extremal subspace alignment via spectral projections]\label{lem:align}
Let $X,Y\in B(H)_{++}$ (with $H$ any Hilbert space) satisfy (4). Let $E_{\max}(X)$ denote the spectral subspace of $X$ corresponding to its largest eigenvalue $\lambda_{\max}(X):=\|X\|$ (defined as the range of the spectral projection $\mathbf{1}_{[\lambda_{\max}(X),\infty)}(X)$), and $E_{\min}(X)$ the spectral subspace at its smallest eigenvalue $\lambda_{\min}(X)$ (the range of $\mathbf{1}_{\{\lambda_{\min}(X)\}}(X)$). Then
\[
E_{\max}(X)\cap E_{\max}(Y)\neq\{0\}, \qquad E_{\min}(X)\cap E_{\min}(Y)\neq\{0\}.
\]
The same statement holds if $E_{\max}, E_{\min}$ denote the extremal generalised eigenspaces (i.e., kernels of $(X-\lambda_{\max}I)^k$ and $(X-\lambda_{\min}I)^k$ for sufficiently large $k$), relevant when multiplicities are greater than one.
\end{lemma}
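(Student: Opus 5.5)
The plan is to read each equality in (4) as a chain of norm inequalities that must be tight at every link, and to use the elementary fact that for a positive operator $Y$, a unit vector $v$ satisfies $\|Yv\|=\|Y\|$ exactly when $v\in E_{\max}(Y)$. Indeed $\|Yv\|^2=\langle Y^2v,v\rangle$, and by the spectral theorem this equals $\|Y\|^2=\sup\sigma(Y^2)$ only if the spectral measure of $v$ is concentrated at $\lambda_{\max}(Y)$, i.e.\ $Yv=\lambda_{\max}(Y)v$. The analogous characterisation for $E_{\min}$ uses $Y^{-1}$, whose top eigenspace is $E_{\min}(Y)$.

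\textbf{Step 1 (maximal eigenspaces).} Choose a unit vector $v$ with $\|XYv\|=\|XY\|$. This is possible in $M_n(\mathbb{C})$ by compactness of the unit sphere, and more generally whenever the norm of $XY$ is attained. Then
\[
\|X\|\|Y\|=\|XYv\|\le\|X\|\,\|Yv\|\le\|X\|\|Y\|,
\]
so $\|Yv\|=\|Y\|$. By the fact above, $v\in E_{\max}(Y)$ and $Yv=\|Y\|v$. Substituting back gives $\|Y\|\,\|Xv\|=\|X\|\|Y\|$, so $\|Xv\|=\|X\|$ and hence $v\in E_{\max}(X)$. Therefore $0\neq v\in E_{\max}(X)\cap E_{\max}(Y)$.

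\textbf{Step 2 (minimal eigenspaces).} Write $(XY)^{-1}=Y^{-1}X^{-1}$ and run the same argument with the roles reversed. Take a unit vector $w$ with $\|Y^{-1}X^{-1}w\|=\|Y^{-1}\|\|X^{-1}\|$. Tightness first gives $\|X^{-1}w\|=\|X^{-1}\|$, so $w\in E_{\max}(X^{-1})=E_{\min}(X)$ and $X^{-1}w=\lambda_{\min}(X)^{-1}w$. Tightness then gives $\|Y^{-1}w\|=\|Y^{-1}\|$, so $w\in E_{\min}(Y)$. This produces the nonzero common vector in $E_{\min}(X)\cap E_{\min}(Y)$.

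\textbf{Step 3 (generalised eigenspaces).} Since $X$ and $Y$ are self-adjoint, $\ker(X-\lambda I)^k=\ker(X-\lambda I)$ for every $k\ge1$. The generalised extremal eigenspaces therefore coincide with the ordinary ones, and the final sentence of the lemma follows from Steps 1 and 2.

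\textbf{Main obstacle: norm attainment in infinite dimensions.} In finite dimensions Steps 1 and 2 are routine. For an arbitrary $B(H)$, however, the norms of $XY$ and $Y^{-1}X^{-1}$ need not be attained, and $E_{\max}$ may even be $\{0\}$ when $\lambda_{\max}$ is not an eigenvalue, so the statement as literally phrased needs such an assumption. I would first try an approximate version: take unit vectors $v_n$ with $\|XYv_n\|\to\|X\|\|Y\|$, which forces $\|(Y-\|Y\|)v_n\|\to0$ and $\|(X-\|X\|)v_n\|\to0$. This yields common approximate eigenvectors but not a genuine common eigenvector.

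To get a genuine eigenvector, I would assume that $\lambda_{\max}$ and $\lambda_{\min}$ are isolated eigenvalues of finite multiplicity, as in the compact-perturbation setting of Remark \ref{rem:infinite-dim}. Then the spectral projections $P_X$ onto $E_{\max}(X)$ and $P_Y$ onto $E_{\max}(Y)$ have finite rank, with spectral gaps. The gaps give $\|(I-P_X)v_n\|\to0$ and $\|(I-P_Y)v_n\|\to0$, so $\|P_XP_Yv_n\|\to1$. Since $P_XP_Y$ has finite rank, a subsequence of $P_Yv_n$ converges in the finite-dimensional space $E_{\max}(Y)$ to a unit vector $u$. Passing to the limit gives $\|P_Xu\|=1$, so $u\in E_{\max}(X)\cap E_{\max}(Y)$.
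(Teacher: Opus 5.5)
Your argument is correct and is essentially the paper's. The tight chain $\|XYv\|\le\|X\|\,\|Yv\|\le\|X\|\|Y\|$ puts $v$ in $\Emax(Y)$ and then in $\Emax(X)$; the paper exhibits the common vector as $Yw=\|Y\|w$, which is the same line. The minimal case is the same chain applied to $(XY)^{-1}=Y^{-1}X^{-1}$. Your spectral-measure justification of ``$\|Yv\|=\|Y\|\Rightarrow v\in\Emax(Y)$'' and your Step 3 fill in points the paper only asserts.

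Your infinite-dimensional caveat is right and more careful than the paper. There the paper's proof only gestures at ``a sequence of unit vectors'' and never produces a genuine common vector. In fact the lemma as literally stated fails in general $B(H)$: take $Y=I$ and $X$ multiplication by $t$ on $L^2[1,2]$, so (4) holds trivially while $\Emax(X)=\Emin(X)=\{0\}$. An isolated, finite-multiplicity hypothesis, combined with your approximate-eigenvector and compactness argument, is what is needed to rescue it.
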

\begin{proof}
From $\|XY\|=\|X\|\|Y\|$, the extremal norm conditions (Fact in proof of Theorem \ref{thm:subadd}) applied to the product $XY$ yield: there exists a unit vector $w$ (or more generally, a sequence of unit vectors for infinite rank) with $\|XYw\|=\|XY\|\to\|X\|\|Y\|$ in the limit. The chain of norm inequalities $\|XYw\|\le\|X\|\|Yw\|\le\|X\|\|Y\|$ forces both to be equalities, hence $\|Yw\|=\|Y\|$ and $\|X(Yw)\|=\|X\|\|Yw\|$. This means $w$ is a maximal-norm vector for $Y$ and $Yw$ is a maximal-norm vector for $X$. By the definition of spectral subspaces (equivalently, by the compactness of $E_{\max}(Y)$ when $E_{\max}(Y)$ is finite-dimensional), $w\in E_{\max}(Y)$, and by invertibility of $Y$, $Yw\in E_{\max}(Y)$ as well. Moreover, $Yw\in E_{\max}(X)$. Thus $Yw\in E_{\max}(X)\cap E_{\max}(Y)$, which is nonzero. The minimal part follows by applying the argument to $X^{-1},Y^{-1}$.
\end{proof}

\begin{theorem}[Dimensional threshold for equality]\label{thm:threshold}
Let $X,Y\in M_n(\mathbb{C})_{++}$. If $\QLC(XY)=\QLC(X)+\QLC(Y)$, then $XY=YX$ for $n\le3$. Moreover, $n=4$ is the smallest dimension in which noncommuting equality cases can occur.
\end{theorem}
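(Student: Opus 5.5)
The plan is to turn the equality into a pair of common eigenvectors, and then use self-adjointness to split the space into invariant pieces. For $n\le3$ those pieces are all one-dimensional, which forces simultaneous diagonalisation. For $n=4$ they leave a two-dimensional middle block, where a noncommuting example can be placed.

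\emph{Step 1 (common extremal eigenvectors).} Equality in (2) is exactly (4). I would use Lemma \ref{lem:align}, but make its content explicit for positive matrices. Pick a unit vector $w$ with $\|XYw\|=\|XY\|=\|X\|\|Y\|$. The chain $\|XYw\|\le\|X\|\|Yw\|\le\|X\|\|Y\|$ forces $\|Yw\|=\|Y\|$. Since $Y\ge0$, this means $Yw=\|Y\|w$. Then $\|Xw\|=\|X\|$ gives $Xw=\|X\|w$. So there is a unit vector $u$ in $\Emax(X)\cap\Emax(Y)$. Applying the same argument to $X^{-1},Y^{-1}$ with the second condition in (4) gives a unit vector $v\in\Emin(X)\cap\Emin(Y)$.

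\emph{Step 2 (reduction for $n\le3$).} If $X$ or $Y$ is a scalar multiple of $I$, the two matrices commute trivially, so assume neither is. Then $\lambda_{\max}(X)\neq\lambda_{\min}(X)$, and $u\perp v$ because eigenvectors of a self-adjoint matrix for distinct eigenvalues are orthogonal. The span $\operatorname{span}\{u,v\}$ is invariant under both $X$ and $Y$. Since both are self-adjoint, its orthogonal complement $W$ is invariant under both as well.
\begin{itemize}
\item For $n=2$, $W=\{0\}$, so $X$ and $Y$ are both diagonal in the basis $\{u,v\}$.
\item For $n=3$, $W$ is one-dimensional and invariant, so a unit vector in $W$ is a common eigenvector. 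Hence $X$ and $Y$ are simultaneously diagonal in an orthonormal basis.
\end{itemize}
In both cases $XY=YX$, and $n=1$ is trivial. This step is the heart of the argument, but it is routine once Step 1 is written correctly. The main care needed is the scalar case, where $u$ and $v$ may coincide; it is disposed of up front.

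\emph{Step 3 (sharpness at $n=4$).} By Step 2 no noncommuting example exists for $n\le3$, so it suffices to exhibit one in $M_4$. I would use block matrices
\[
X=4\oplus A\oplus 1,\qquad Y=4\oplus B\oplus 1,\qquad A=\begin{pmatrix}2&0\\0&3\end{pmatrix},\quad B=\begin{pmatrix}5/2&1/2\\1/2&5/2\end{pmatrix}.
\]
The matrix $B$ has eigenvalues $2$ and $3$, and $AB\neq BA$, so $XY\neq YX$. We have $XY=16\oplus AB\oplus 1$, and $\kappa(X)=\kappa(Y)=4$. The singular values of $AB$ lie in $[\lambda_{\min}(A)\lambda_{\min}(B),\,\|A\|\|B\|]=[4,9]\subset(1,16)$. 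Therefore $\|XY\|=16$ and $\|(XY)^{-1}\|=1$, so $\kappa(XY)=16=\kappa(X)\kappa(Y)$ and equality holds in (2).

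The only point needing a check is this singular-value containment, which follows from submultiplicativity applied to $AB$ and $(AB)^{-1}$. The underlying idea is that the extremal directions are shared, while the noncommuting part is confined to a middle block whose spectrum sits strictly inside the extremal range.
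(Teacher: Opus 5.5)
Your proof is correct and follows the paper's route: the alignment step gives common eigenvectors $u\in\Emax(X)\cap\Emax(Y)$ and $v\in\Emin(X)\cap\Emin(Y)$, self-adjointness makes the orthogonal complement of their span a common invariant subspace, and the $n=4$ example is a block-diagonal pair that shares its extremal $1\times1$ blocks around a noncommuting $2\times2$ middle block. The differences are in organisation only: your uniform dimension count (after excluding the scalar case so that $u\perp v$, the complement $W=\{u,v\}^\perp$ has dimension $n-2\le1$) is a tidier replacement for the paper's case split on the multiplicities of the eigenvalues of $X$, and your check that the singular values of $AB$ lie in $[4,9]\subset(1,16)$ is as valid as the paper's Frobenius-norm estimate for its different example.
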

\begin{proof}
For $n=1$ trivial.

\emph{Case $n=2$.} Let $X=\operatorname{diag}(\lambda_1,\lambda_2)$, $\lambda_1\ge\lambda_2>0$, and $Y=\begin{pmatrix}a&b\\\bar b&c\end{pmatrix}>0$. If $\lambda_1=\lambda_2$, $X$ is scalar and commutes. Assume $\lambda_1>\lambda_2$. By Lemma \ref{lem:align}, there exists $u\in\Emax(X)\cap\Emax(Y)$. Since $\Emax(X)=\operatorname{span}\{e_1\}$, $u$ is proportional to $e_1$, so $Ye_1=\mu e_1$, giving $b=0$. Similarly, the minimal alignment gives $\Emin(X)\cap\Emin(Y)\neq\{0\}$, so $e_2$ is an eigenvector of $Y$. Hence $Y$ is diagonal and commutes with $X$.

\emph{Case $n=3$.} Let $X=\operatorname{diag}(\lambda_1,\lambda_2,\lambda_3)$, $\lambda_1\ge\lambda_2\ge\lambda_3>0$, and $Y>0$.

\emph{Subcase $\lambda_1>\lambda_2>\lambda_3$.} $\Emax(X)=\operatorname{span}\{e_1\}$ is one-dimensional; alignment gives $Ye_1=\mu_1e_1$. Similarly $\Emin(X)=\operatorname{span}\{e_3\}$ gives $Ye_3=\mu_3e_3$. Since $Y$ is self-adjoint, $\operatorname{span}\{e_1\}$ and $\operatorname{span}\{e_3\}$ are $Y$-invariant, hence so is their orthogonal complement $\operatorname{span}\{e_2\}$; thus $Y$ is diagonal in the basis $\{e_1,e_2,e_3\}$.

\emph{Subcase $\lambda_1=\lambda_2>\lambda_3$.} Let $E=\operatorname{span}\{e_1,e_2\}$, $F=\operatorname{span}\{e_3\}$. Minimal alignment gives $e_3\in\Emin(Y)$ (since $\Emin(X)=F$), so $Ye_3=\mu_3e_3$; thus $F$ is $Y$-invariant, hence so is $E=F^\perp$. On $E$, $X$ is scalar ($\lambda_1I_E$), so $Y$ commutes with $X$ on $E$; on $F$, $X$ is scalar ($\lambda_3I_F$), so $Y$ commutes there too. Therefore $XY=YX$.

\emph{Subcase $\lambda_1>\lambda_2=\lambda_3$.} Symmetric argument using maximal alignment.

Thus for $n\le3$, equality forces commutativity.

\emph{Noncommuting example for $n=4$.} Let
\[
X=\begin{pmatrix}4&0&0&0\\0&3&0&0\\0&0&1&0\\0&0&0&1\end{pmatrix},\qquad
Y=\begin{pmatrix}4&0&0&0\\0&2&1&0\\0&1&2&0\\0&0&0&1\end{pmatrix}.
\]
Then $\kappa(X)=4$, and $\sigma(Y)=\{4,3,1,1\}$, so $\kappa(Y)=4$. We have
\[
XY=\begin{pmatrix}16&0&0&0\\0&6&3&0\\0&1&2&0\\0&0&0&1\end{pmatrix}.
\]
Let $B=\begin{pmatrix}6&3\\1&2\end{pmatrix}$. Then $\|B\|\le\|B\|_F=\sqrt{50}<16$, and $B^{-1}=\tfrac19\begin{pmatrix}2&-3\\-1&6\end{pmatrix}$, $\|B^{-1}\|\le\|B^{-1}\|_F=\sqrt{50}/9<1$. Since $XY$ has the scalar blocks $16$ and $1$, we get $\|XY\|=16$ and $\|(XY)^{-1}\|=1$. Hence $\kappa(XY)=16=4\cdot4$, so equality in (2) holds. However, the blocks on $e_2,e_3$ satisfy
\[
\operatorname{diag}(3,1)\begin{pmatrix}2&1\\1&2\end{pmatrix} \neq \begin{pmatrix}2&1\\1&2\end{pmatrix}\operatorname{diag}(3,1),
\]
so $X$ and $Y$ do not commute. This establishes that noncommuting equality cases exist for $n=4$, and hence for all $n\ge4$ by taking direct sums with identity blocks.
\end{proof}

\begin{remark}[The intermediate infinite-dimensional regime]
Theorem \ref{thm:threshold} is stated and proved for finite-dimensional positive matrices $X,Y\in M_n(\mathbb{C})_{++}$. The argument uses the finiteness of eigenspace dimensions and the orthogonal-complement structure of $\mathbb{C}^n$. 

However, for certain infinite-dimensional operators, the result extends via a spectral-measure perspective. Suppose $X,Y\in B(H)_{++}$ satisfy the equality condition (4), and suppose further that:
\begin{enumerate}
\item The spectrum of $X$ consists of isolated points (i.e., point spectrum) with finite multiplicity at each point.
\item The extremal eigenvalues $\lambda_{\max}(X),\lambda_{\min}(X)$ are isolated (not accumulation points of other spectrum), with finite-dimensional eigenspaces $E_{\max}(X),E_{\min}(X)$.
\item The ``middle'' part of the spectrum (eigenvalues strictly between the extrema) either is empty, or is itself finite-dimensional.
\end{enumerate}

Under these conditions, the restriction of $X,Y$ to the finite-dimensional invariant subspace spanned by the extremal and intermediate eigenspaces of $X$ (or $Y$) reduces to the finite-dimensional case, and the commutativity conclusion of Theorem \ref{thm:threshold} follows. Examples include: 
- Compact perturbations $X=I+K$ with $K$ compact of finite rank.
- Countable multiplicity with finite total dimension ($\dim E_{\max}\oplus E_{\text{middle}}\oplus E_{\min}<\infty$).
- Operators defined by multiplication on $L^2(\mathbb{R})$ with support on a finite interval, whose spectrum is discrete.

For fully general $B(H)_{++}$, the threshold result and commutativity conclusions do not apply without additional hypotheses, since the extremal subspaces may be infinite-dimensional and the case analysis of Theorem \ref{thm:threshold} for intermediate multiplicity becomes subtle. We leave a complete characterization of the infinite-dimensional threshold structure as a natural direction for future work (see also Open Questions).
\end{remark}

\begin{remark}
The alignment phenomenon appearing in the equality case is reminiscent of the algebraic structures associated with K-loops and gyrovector spaces on the positive cone (see \cite{beneduci2014,abe2015,park2015,kim2016}), but we do not claim that our equality condition is equivalent to vanishing gyration in general; this remains an open problem. Our matrix proof for $n\le3$ gives a self-contained verification in low dimensions, and the alignment principle (Lemma \ref{lem:align}) extends to the intermediate infinite-dimensional regime.
\end{remark}

\subsection{Tensor Products and Composite Systems}

\begin{proposition}[Tensor product additivity]\label{prop:tensor-add}
For invertible $A,B$, the singular values of $A\otimes B$ are products of the singular values of $A$ and of $B$, hence
\[
\kappa(A\otimes B)=\kappa(A)\kappa(B), \qquad \QLC(A\otimes B)=\QLC(A)+\QLC(B).
\]
\end{proposition}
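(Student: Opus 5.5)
The plan is to compute the singular values of $A\otimes B$ directly from singular value decompositions of the two factors, and then read off the norm and the inverse norm. Work on $H_1\otimes H_2$. In finite dimensions, write $A=U_1\Sigma_AV_1^*$ and $B=U_2\Sigma_BV_2^*$. The mixed-product rule $(P\otimes Q)(R\otimes S)=PR\otimes QS$ gives
\[
A\otimes B=(U_1\otimes U_2)(\Sigma_A\otimes\Sigma_B)(V_1\otimes V_2)^*,
\]
where $U_1\otimes U_2$ and $V_1\otimes V_2$ are unitary. Since $\Sigma_A\otimes\Sigma_B$ is diagonal with entries $s_i(A)s_j(B)$, this is a singular value decomposition of $A\otimes B$, which proves the first claim.

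To make the argument dimension-free, as the rest of the paper intends, I would phrase it through absolute values rather than SVDs. The mixed-product rule gives $(A\otimes B)^*(A\otimes B)=A^*A\otimes B^*B$. Uniqueness of positive square roots, with $|A|\otimes|B|$ positive, then yields $|A\otimes B|=|A|\otimes|B|$. By the polar reduction of Section 2, $\kappa(A\otimes B)=\kappa(|A|\otimes|B|)$, so it suffices to treat positive invertible $A,B$.

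For positive invertible $A$ and $B$, I need three facts:
\[
\|A\otimes B\|=\|A\|\|B\|,\qquad (A\otimes B)^{-1}=A^{-1}\otimes B^{-1},\qquad \|A^{-1}\otimes B^{-1}\|=\|A^{-1}\|\|B^{-1}\|.
\]
The inverse formula follows from the mixed-product rule, and the two norm identities are the cross-norm property. The inequality $\le$ is the standard bound $\|P\otimes Q\|\le\|P\|\|Q\|$. For $\ge$ in infinite dimensions, pick approximate maximisers $x_n,y_n$ with $\|Ax_n\|\to\|A\|$ and $\|By_n\|\to\|B\|$, and use $\|(A\otimes B)(x_n\otimes y_n)\|=\|Ax_n\|\|By_n\|$. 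Alternatively, the spectral theorem shows $\sigma(A\otimes B)=\sigma(A)\sigma(B)$ for commuting positive operators $A\otimes I$ and $I\otimes B$, which gives $\sup$ and $\inf$ of the spectrum as products. Multiplying the two norm identities gives $\kappa(A\otimes B)=\kappa(A)\kappa(B)$, and taking $\tfrac12\ln$ gives the additivity of $\QLC$.

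The only mildly delicate step is the lower bound in the cross-norm property, together with the inverse norm, in infinite dimensions, where singular values need not be attained. The approximate-maximiser argument above handles this. In finite dimensions the argument is routine, and that is the setting the proposition mainly targets.
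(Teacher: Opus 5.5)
Your proposal is correct. In finite dimensions it is the paper's argument: the paper's proof is the single sentence that the claim is immediate from the singular value decomposition of $A\otimes B$, and your factorisation $A\otimes B=(U_1\otimes U_2)(\Sigma_A\otimes\Sigma_B)(V_1\otimes V_2)^*$ is that decomposition written out.

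Where you go beyond the paper is the dimension-free part. The paper states Section 4 for arbitrary Hilbert spaces but offers only the SVD. For invertible operators, an SVD in the diagonal sense exists only when the space is finite-dimensional. Your cross-norm argument closes that gap: you get $\|P\otimes Q\|=\|P\|\|Q\|$ from approximate maximisers $x_n\otimes y_n$, and combine it with $(P\otimes Q)^{-1}=P^{-1}\otimes Q^{-1}$. This gives $\kappa(A\otimes B)=\kappa(A)\kappa(B)$ on any $H_1\otimes H_2$.

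The detour through $|A\otimes B|=|A|\otimes|B|$ is correct but unnecessary. Both the cross-norm identity and the inverse formula hold for arbitrary bounded invertible operators, so you can apply them to $A$ and $B$ directly.

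In short, the paper's route gives the full list of singular values in finite dimensions. Yours gives the $\kappa$ and OLC identities in the generality the paper actually claims.
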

\begin{proof}
Immediate from the singular value decomposition of $A\otimes B$.
\end{proof}

\begin{proposition}[Local vs. global contrast]\label{prop:local-global}
Let $Z\in B(H_1\otimes H_2)$ be invertible, $\dim H_2=d_2<\infty$. Then
\[
\QLC(Z) \ge \max\{\QLC(\operatorname{Tr}_{H_2}|Z|), \QLC(\operatorname{Tr}_{H_1}|Z|)\}.
\]
\end{proposition}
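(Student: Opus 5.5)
The idea is to reduce everything to spectral bounds on the positive operator $P:=|Z|$. Since $\kappa(Z)=\kappa(|Z|)$, the inequality becomes a statement about $P$ and its partial traces. The basic fact is the operator sandwich
\[
m\,I\le P\le M\,I, \qquad m=\inf\sigma(P),\ M=\sup\sigma(P), \qquad \QLC(Z)=\tfrac12\ln(M/m).
\]
We then push this sandwich through each partial trace, using positivity of the partial trace maps. Because OLC is scale-invariant, the constants that partial traces introduce will be harmless.

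\emph{First partial trace ($\operatorname{Tr}_{H_2}$).} This case is clean because $\dim H_2=d_2<\infty$. The map $\operatorname{Tr}_{H_2}:B(H_1\otimes H_2)\to B(H_1)$ is positive, normal and well defined, and it sends $I_{H_1}\otimes I_{H_2}$ to $d_2 I_{H_1}$. Applying it to the sandwich gives
\[
d_2 m\, I_{H_1}\le \operatorname{Tr}_{H_2}P\le d_2 M\, I_{H_1}.
\]
Hence $\operatorname{Tr}_{H_2}P$ is positive invertible, with spectrum inside $[d_2m,d_2M]$. It follows that $\kappa(\operatorname{Tr}_{H_2}P)\le M/m=\kappa(Z)$, and taking $\tfrac12\ln$ gives the first half of the inequality.

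\emph{Second partial trace ($\operatorname{Tr}_{H_1}$).} Here we must be careful, and I expect this to be the main obstacle. If $\dim H_1=\infty$, then $\operatorname{Tr}_{H_1}P$ need not exist: $P\ge mI$ with $m>0$ is not trace class in the $H_1$ direction. So the first step is to read the statement with $\dim H_1<\infty$ whenever the second term is meant to be finite. In that case the same argument applies verbatim with $d_1$ in place of $d_2$, giving
\[
d_1 m\, I_{H_2}\le \operatorname{Tr}_{H_1}P\le d_1 M\, I_{H_2}.
\]
If $\dim H_1=\infty$, I would argue that $\operatorname{Tr}_{H_1}|Z|$ is undefined, or else that under the natural extended-valued convention it gives nothing beyond the first term. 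Either way the maximum reduces to the $\operatorname{Tr}_{H_2}$ term. I would state this caveat explicitly in the proof.

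\emph{Combining.} Since OLC is invariant under positive scaling, the factors $d_1,d_2$ cancel in each condition-number ratio. Taking the maximum of the two inequalities then gives the claim.

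A remark on sharpness could follow. For $Z=A\otimes B$, Proposition~\ref{prop:tensor-add} shows that $\QLC(Z)=\QLC(A)+\QLC(B)$. The partial traces there are $(\operatorname{Tr}B)\,A$ and $(\operatorname{Tr}A)\,B$, each of which recovers only one summand of $\QLC(Z)$. So the inequality is generally strict: local contrast underestimates global contrast.
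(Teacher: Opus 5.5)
Your proof is correct and follows the same route as the paper: reduce to $P=|Z|$ via $\kappa(Z)=\kappa(|Z|)$, push the sandwich $mI\le P\le MI$ through the positive partial trace (which sends $I$ to $d_2 I_{H_1}$), and let scale invariance absorb the factor $d_2$. Your caveat that $\operatorname{Tr}_{H_1}|Z|$ is only defined when $\dim H_1<\infty$ is a genuine point that the paper's ``the same argument applies to the other partial trace'' passes over silently. The reason is that $|Z|\ge mI$ with $m>0$ is never trace class along an infinite-dimensional $H_1$.
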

\begin{proof}
Write $Z>0$ (the general case follows by applying the argument to $|Z|$; see Remark \ref{rem:polar} for why this reduction is valid here but not for commutativity statements). The operator inequalities $\lambda_{\min}(Z)I\le Z\le\lambda_{\max}(Z)I$ are preserved under the positive partial trace. Since $\operatorname{Tr}_{H_2}I=d_2I_1$,
\[
d_2\lambda_{\min}(Z)I_1 \le \operatorname{Tr}_{H_2}Z \le d_2\lambda_{\max}(Z)I_1,
\]
so $\lambda_{\min}(\operatorname{Tr}_{H_2}Z)\ge d_2\lambda_{\min}(Z)$ and $\lambda_{\max}(\operatorname{Tr}_{H_2}Z)\le d_2\lambda_{\max}(Z)$. Hence
\[
\kappa(\operatorname{Tr}_{H_2}Z) \le \frac{d_2\lambda_{\max}(Z)}{d_2\lambda_{\min}(Z)} = \kappa(Z),
\]
and taking logarithms gives $\QLC(\operatorname{Tr}_{H_2}Z)\le\QLC(Z)$. The same argument applies to the other partial trace.
\end{proof}

\begin{remark}
No general upper bound exists: the counterexample $Z_\varepsilon = I\otimes I + \varepsilon(X\otimes X+Y\otimes Y)$ on $\mathbb{C}^2\otimes\mathbb{C}^2$ gives positive $\QLC(Z_\varepsilon)$ while both partial traces are proportional to the identity ($\QLC=0$).
\end{remark}

\begin{proposition}[Chain products]
For a chain $H=\bigotimes_{i=1}^N H_i$ and nearest-neighbour invertible operators $A_{i,i+1}$,
\[
\QLC(A_{12}A_{23}\cdots A_{N-1,N}) \le \sum_{i=1}^{N-1}\QLC(A_{i,i+1}).
\]
Equality requires extremal subspace alignment at each step.
\end{proposition}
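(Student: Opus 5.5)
The inequality follows from Theorem \ref{thm:subadd} by induction on the number of factors. Here each $A_{i,i+1}$ is understood as its ampliation $I\otimes\cdots\otimes A_{i,i+1}\otimes\cdots\otimes I$ acting on all of $H$. Write $P_k:=A_{12}A_{23}\cdots A_{k,k+1}$.

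\textbf{Step 1: the ampliation does not change OLC.} By Proposition \ref{prop:tensor-add}, tensoring with an identity factor does not change the condition number, since $\kappa(I)=1$. The ampliation is a tensor product with identities up to a permutation of tensor factors, and that permutation is a unitary conjugation, which preserves singular values. Hence the OLC of each ampliated factor equals $\QLC(A_{i,i+1})$ computed on $H_i\otimes H_{i+1}$. This matters when some $H_j$ is infinite-dimensional, but the singular-value argument only uses the unitary invariance of $\|\cdot\|$ and of $\|(\cdot)^{-1}\|$ together with $\|A\otimes I\|=\|A\|$, so dimension causes no difficulty.

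\textbf{Step 2: induction on $k$.} Apply (2) to the invertible operators $P_{k-1}$ and $A_{k,k+1}$:
\[
\QLC(P_k)\le\QLC(P_{k-1})+\QLC(A_{k,k+1}).
\]
With base case $P_1=A_{12}$, this yields the stated bound with $k=N-1$. Invertibility of every $P_k$ is automatic, since a product of invertible operators is invertible.

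\textbf{Step 3: the equality statement.} If equality holds for the full chain, then every inequality in the telescoping induction must be an equality. The total is squeezed between $\QLC(P_{N-1})$ and the sum of the increments, and each increment is bounded above by the corresponding term. So at each step $\kappa(P_{k-1}A_{k,k+1})=\kappa(P_{k-1})\kappa(A_{k,k+1})$, which is equivalent to the pair of norm identities analogous to (4) for the couple $(P_{k-1},A_{k,k+1})$. The extremal-vector argument of Lemma \ref{lem:align} then gives the alignment at each step. In the norm chain $\|P_{k-1}A_{k,k+1}w\|\le\|P_{k-1}\|\,\|A_{k,k+1}w\|$, every inequality is tight. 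It follows that $w$ is a maximal singular vector of $A_{k,k+1}$ and $A_{k,k+1}w$ is a maximal right-singular vector of $P_{k-1}$; the same holds for the inverses.

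\textbf{Main obstacle.} The hardest point is that Lemma \ref{lem:align} is stated for positive $X,Y$, while the partial products $P_k$ are generally not positive even when each $A_{i,i+1}$ is. I would therefore phrase the alignment in terms of singular subspaces: right-singular subspaces for the top singular value, and likewise for the inverse. The norm-chain argument works verbatim there and does not need positivity.

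A second caveat concerns infinite dimensions. If $H$ is infinite-dimensional, norms need not be attained, so the alignment holds only approximately, along sequences of unit vectors. I would state the equality clause as a necessary condition in this sense and not attempt a converse.
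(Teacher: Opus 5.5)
Your proof is correct and follows the paper's route: iterate Theorem \ref{thm:subadd} along the partial products $P_k$, then read off the equality condition at each step from the norm-chain argument behind Lemma \ref{lem:align}. Your additions tighten that argument rather than change the method. The paper cites Lemma \ref{lem:align} even though neither the $A_{i,i+1}$ nor the partial products $P_k$ need be positive. Your rephrasing in terms of top singular vectors (and likewise for the inverses), together with the ampliation check in Step 1, is what makes that one-line citation rigorous.
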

\begin{proof}
Repeated application of Theorem \ref{thm:subadd}; equality conditions follow from Lemma \ref{lem:align}.
\end{proof}

\begin{proposition}[Tensor powers]
$\QLC(A^{\otimes N})=N\,\QLC(A)$.
\end{proposition}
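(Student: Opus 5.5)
The plan is to argue by induction on $N$, using Proposition~\ref{prop:tensor-add} at each step. Throughout, $A$ is assumed invertible, as in Proposition~\ref{prop:tensor-add}, so that $\kappa(A)$ is finite and $A^{\otimes N}$ is invertible with $(A^{\otimes N})^{-1}=(A^{-1})^{\otimes N}$.

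\emph{Base case.} For $N=1$ the identity is trivial.

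\emph{Inductive step.} Write $A^{\otimes (N+1)}=A^{\otimes N}\otimes A$. Proposition~\ref{prop:tensor-add}, applied to the invertible pair $(A^{\otimes N},A)$, gives
\[
\QLC(A^{\otimes (N+1)})=\QLC(A^{\otimes N})+\QLC(A).
\]
The inductive hypothesis then turns the right-hand side into $(N+1)\QLC(A)$.

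\emph{Direct check.} Equivalently, one can work with norms. The multiplicativity $\|B\otimes C\|=\|B\|\,\|C\|$ for the operator norm on the Hilbert-space tensor product gives $\|A^{\otimes N}\|=\|A\|^N$. Applied to $A^{-1}$, it gives $\|(A^{\otimes N})^{-1}\|=\|A^{-1}\|^N$. Hence $\kappa(A^{\otimes N})=\kappa(A)^N$, and taking $\tfrac12\ln$ yields $\QLC(A^{\otimes N})=N\,\QLC(A)$.

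\emph{Main obstacle.} The only delicate point is in infinite dimensions, where "singular values are products" should be read as the spectral statement $\sigma(|A|\otimes|A|)=\sigma(|A|)\cdot\sigma(|A|)$. I would avoid this by justifying the norm multiplicativity for the spatial tensor product directly. The upper bound comes from factoring $B\otimes C=(B\otimes I)(I\otimes C)$. The lower bound comes from evaluating on elementary tensors $x\otimes y$ of near-norming unit vectors, which gives $\|(B\otimes C)(x\otimes y)\|=\|Bx\|\,\|Cy\|$.
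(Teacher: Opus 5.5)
Your proof is correct and matches the paper's. The paper derives the statement directly from Proposition~\ref{prop:tensor-add}, which amounts to iterating $\kappa(A\otimes B)=\kappa(A)\kappa(B)$, exactly as your induction does. Your supplementary direct check, via $\|B\otimes C\|=\|B\|\,\|C\|$ and $(A^{\otimes N})^{-1}=(A^{-1})^{\otimes N}$, is a useful addition: it justifies $\kappa(A^{\otimes N})=\kappa(A)^N$ on an arbitrary Hilbert space, without relying on the paper's essentially finite-dimensional ``singular values'' phrasing.
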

\begin{proof}
Immediate from Proposition \ref{prop:tensor-add}.
\end{proof}

We now show that the dimensional threshold of Theorem \ref{thm:threshold} interacts sharply with tensor-product additivity, giving a criterion independent of total ambient dimension. The key technical step is the following elementary but easily mis-stated fact about products of bounded positive reals, which we now prove carefully rather than by assertion.

\begin{lemma}[Tightness of a bounded product]\label{lem:tight-product}
Let $a_1,\dots,a_k,b_1,\dots,b_k$ be positive real numbers with $a_i\le b_i$ for every $i=1,\dots,k$, and suppose
\[
\prod_{i=1}^k a_i = \prod_{i=1}^k b_i.
\]
Then $a_i=b_i$ for every $i=1,\dots,k$.
\end{lemma}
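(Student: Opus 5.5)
The plan is to argue by contradiction, isolating a single index and using positivity to divide. Suppose some index $j$ has $a_j<b_j$. For every $i\neq j$ we have $0<a_i\le b_i$, so multiplying these $k-1$ inequalities between positive reals gives
\[
\prod_{i\neq j} a_i \le \prod_{i\neq j} b_i .
\]
This uses only that products of inequalities between positive numbers are preserved. An induction on the number of factors handles this routinely: from $0<x\le y$ and $0<u\le v$ we get $xu\le yu\le yv$.

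Next, I will combine this with the strict inequality at $j$. Since $\prod_{i\neq j}a_i>0$, multiplying $a_j<b_j$ by this positive quantity preserves strictness. Then, using $b_j>0$, the previous bound gives
\[
\prod_{i=1}^k a_i = a_j\prod_{i\neq j}a_i < b_j\prod_{i\neq j}a_i \le b_j\prod_{i\neq j}b_i=\prod_{i=1}^k b_i .
\]
This contradicts the assumed equality of the two full products, so no such $j$ exists and $a_i=b_i$ for all $i$.

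An alternative, equally short route is to take logarithms. Setting $t_i:=\ln b_i-\ln a_i\ge0$, the hypothesis becomes $\sum_i t_i=0$, and a sum of nonnegative reals vanishes only if each term vanishes. I would probably present this version, since it matches the paper's logarithmic viewpoint on $\QLC$.

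The only point requiring care is where positivity is used. Positivity of the $a_i$ is what keeps the strict inequality strict after multiplication, and positivity of the $b_j$ is what lets us multiply the non-strict bound. Without positivity the lemma fails, for example if some $a_i=0$. There is no genuine obstacle here; the main task is simply to state these sign hypotheses explicitly so that the later tensor-threshold argument can apply the lemma to the products $\kappa(X_iY_i)\le\kappa(X_i)\kappa(Y_i)$, where all quantities are at least $1>0$.
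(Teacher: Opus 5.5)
Your proof is correct, and it is organised differently from the paper's. The paper inducts on $k$. It first settles $k=2$ via the chain $a_1a_2\le a_1b_2\le b_1b_2$. In the inductive step it groups the first $k-1$ factors into $A=\prod_{i<k}a_i\le B=\prod_{i<k}b_i$, applies the two-factor case to obtain $A=B$ and $a_k=b_k$, and then invokes the induction hypothesis on $A=B$.

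Your contradiction argument runs that same two-factor chain just once, on the pair $\bigl(a_j,\prod_{i\neq j}a_i\bigr)$ against $\bigl(b_j,\prod_{i\neq j}b_i\bigr)$ for an arbitrary index $j$. This handles every index at once and removes the outer induction. The only induction left is monotonicity of products, which the paper also uses in its parenthetical ``a product of positive numbers is monotone in each factor.''

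Your logarithmic version is shorter still, and it fits the context best. Take $t_i=\ln b_i-\ln a_i$ with $a_i=\kappa(X_iY_i)$ and $b_i=\kappa(X_i)\kappa(Y_i)$.
\begin{itemize}
\item By Theorem~\ref{thm:subadd}, $\tfrac12 t_i=\QLC(X_i)+\QLC(Y_i)-\QLC(X_iY_i)\ge0$.
\item By Proposition~\ref{prop:tensor-add}, $\tfrac12\sum_i t_i=\QLC(X)+\QLC(Y)-\QLC(XY)$.
\end{itemize}
So the proof of Proposition~\ref{prop:tensor-transfer} reduces to one observation: the subadditivity defect is additive over Kronecker factors and nonnegative in each factor. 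This makes the separate lemma essentially unnecessary.

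What the paper's inductive route offers in exchange is only that it stays within purely multiplicative inequalities and isolates the two-factor mechanism explicitly.
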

\begin{proof}
We induct on $k$. The case $k=1$ is immediate. For the base case $k=2$: since $a_1\le b_1$, $a_2\le b_2$, and all four numbers are positive,
\[
a_1a_2 \le a_1b_2 \le b_1b_2,
\]
and by hypothesis $a_1a_2=b_1b_2$, so both inequalities are equalities. From $a_1b_2=b_1b_2$ and $b_2>0$ we get $a_1=b_1$. From $a_1a_2=a_1b_2$ and $a_1=b_1>0$ we get $a_2=b_2$.

Now assume the lemma holds for $k-1$ factors, and let $a_1,\dots,a_k,b_1,\dots,b_k>0$ satisfy $a_i\le b_i$ for all $i$ and $\prod_{i=1}^k a_i=\prod_{i=1}^k b_i$. Set $A:=\prod_{i=1}^{k-1}a_i$ and $B:=\prod_{i=1}^{k-1}b_i$. Since each $a_i\le b_i$ and all terms are positive, $A\le B$ (a product of positive numbers is monotone in each factor). The hypothesis reads $Aa_k=Bb_k$, with $A\le B$ and $a_k\le b_k$, both pairs positive; applying the $k=2$ base case to the pair $(A,a_k)$ and $(B,b_k)$ gives $A=B$ and $a_k=b_k$. Having established $A=B$, i.e.\ $\prod_{i=1}^{k-1}a_i=\prod_{i=1}^{k-1}b_i$ with $a_i\le b_i$ for $i=1,\dots,k-1$, the induction hypothesis gives $a_i=b_i$ for $i=1,\dots,k-1$. Together with $a_k=b_k$, this proves the claim for $k$ factors.
\end{proof}

\begin{proposition}[Equality transfer under pure tensor products]\label{prop:tensor-transfer}
Let $X=X_1\otimes\cdots\otimes X_k$ and $Y=Y_1\otimes\cdots\otimes Y_k$, with $X_i,Y_i\in M_{n_i}(\mathbb{C})_{++}$. If equality holds in the subadditivity law,
\[
\QLC(XY) = \QLC(X)+\QLC(Y),
\]
then equality holds factorwise: $\kappa(X_iY_i)=\kappa(X_i)\kappa(Y_i)$ for every $i=1,\dots,k$. In particular, if $n_i\le3$ for every $i$, then $X_iY_i=Y_iX_i$ for every $i$, and consequently $XY=YX$.
\end{proposition}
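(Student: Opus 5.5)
The plan is to reduce everything to condition numbers and then apply Lemma~\ref{lem:tight-product}. First, using mixed-product rules, $XY=(X_1Y_1)\otimes\cdots\otimes(X_kY_k)$. Each $X_iY_i$ is invertible, and the singular values of a Kronecker product are exactly the products of the singular values of the factors. Repeated application of Proposition~\ref{prop:tensor-add} then gives
\[
\kappa(XY)=\prod_{i=1}^k\kappa(X_iY_i),\qquad \kappa(X)=\prod_{i=1}^k\kappa(X_i),\qquad \kappa(Y)=\prod_{i=1}^k\kappa(Y_i).
\]
Proposition~\ref{prop:tensor-add} is stated for two factors, so I would justify these formulas by a short induction on $k$.

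Next, the equality hypothesis $\QLC(XY)=\QLC(X)+\QLC(Y)$ is equivalent, after exponentiating $2\QLC=\ln\kappa$, to $\kappa(XY)=\kappa(X)\kappa(Y)$. Written out, this reads
\[
\prod_{i=1}^k\kappa(X_iY_i)=\prod_{i=1}^k\kappa(X_i)\kappa(Y_i).
\]
Set $a_i:=\kappa(X_iY_i)$ and $b_i:=\kappa(X_i)\kappa(Y_i)$. All of these are positive (indeed at least $1$), and $a_i\le b_i$ holds for each $i$ by the submultiplicativity underlying Theorem~\ref{thm:subadd}. Lemma~\ref{lem:tight-product} then yields $a_i=b_i$ for every $i$, which is the claimed factorwise equality. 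Equivalently, $\QLC(X_iY_i)=\QLC(X_i)+\QLC(Y_i)$ for each $i$.

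For the final clause, each pair $X_i,Y_i\in M_{n_i}(\mathbb{C})_{++}$ with $n_i\le3$ now satisfies equality in (2). Theorem~\ref{thm:threshold} therefore gives $X_iY_i=Y_iX_i$. Applying the mixed-product rule in both orders,
\[
XY=\bigotimes_i X_iY_i=\bigotimes_i Y_iX_i=YX.
\]
The factor ordering of the tensor product is the same on both sides, so no permutation issue arises.

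The only step needing care is the first one: verifying that $\kappa$ factors multiplicatively over $k$-fold Kronecker products of the (non-normal) products $X_iY_i$. This needs the singular-value statement rather than an eigenvalue one, since $X_iY_i$ is not positive. I would handle it by writing the SVD $X_iY_i=U_i\Sigma_iV_i^*$ and tensoring these decompositions. That gives $\|\bigotimes_i T_i\|=\prod_i\|T_i\|$, and applying the same identity to inverses gives $\|(\bigotimes_i T_i)^{-1}\|=\prod_i\|T_i^{-1}\|$.
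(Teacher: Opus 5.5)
Your proposal is correct and follows the paper's proof essentially step for step. Both arguments use multiplicativity of $\kappa$ over the Kronecker factorisation $XY=\bigotimes_i X_iY_i$, the factorwise bounds $\kappa(X_iY_i)\le\kappa(X_i)\kappa(Y_i)$, Lemma~\ref{lem:tight-product} to force factorwise equality, and Theorem~\ref{thm:threshold} on each leg; your extra care (the singular-value induction on $k$, and the explicit mixed-product computation $XY=\bigotimes_i X_iY_i=\bigotimes_i Y_iX_i=YX$ in place of the paper's ``exactly when'' remark) only makes explicit what the paper leaves implicit.
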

\begin{proof}
By Proposition \ref{prop:tensor-add}, $\kappa(X)=\prod_i\kappa(X_i)$, $\kappa(Y)=\prod_i\kappa(Y_i)$, and since $XY=\bigotimes_i X_iY_i$, also $\kappa(XY)=\prod_i\kappa(X_iY_i)$. The hypothesis $\kappa(XY)=\kappa(X)\kappa(Y)$ thus reads
\[
\prod_{i=1}^k \kappa(X_iY_i) = \prod_{i=1}^k \big[\kappa(X_i)\kappa(Y_i)\big],
\]
where each factor on the left is bounded above by the corresponding factor on the right, $\kappa(X_iY_i)\le\kappa(X_i)\kappa(Y_i)$, by ordinary submultiplicativity of $\kappa$. Applying Lemma \ref{lem:tight-product} with $a_i=\kappa(X_iY_i)$ and $b_i=\kappa(X_i)\kappa(Y_i)$ gives $\kappa(X_iY_i)=\kappa(X_i)\kappa(Y_i)$ for each $i$. When $n_i\le3$, Theorem \ref{thm:threshold} converts this into $X_iY_i=Y_iX_i$; since factors of a Kronecker product commute exactly when each corresponding pair of tensor legs commutes, $XY=YX$ follows.
\end{proof}

\begin{remark}
Proposition \ref{prop:tensor-transfer} shows that the threshold $n\le3$ in Theorem \ref{thm:threshold} is a statement about irreducibility rather than raw ambient dimension: equality still forces commutativity in arbitrarily large $n$, provided $X,Y$ factor as pure tensor products of blocks of size at most $3$. The noncommuting equality-saturating example of Theorem \ref{thm:threshold} therefore cannot be reproduced, at any total dimension, by padding with further tensor factors of size $\le3$; its irreducible $4\times4$ block is essential.
\end{remark}

\subsection{Power Laws and Interpolation}
For $A\in B(H)_{++}$ and $\alpha\in\mathbb{R}$,
\[
\QLC(A^\alpha) = |\alpha|\,\QLC(A). \tag{5}
\]
Indeed, $\kappa(A^\alpha)=\kappa(A)^{|\alpha|}$.

For $X,Y\in B(H)_{++}$ and $\alpha\in[0,1]$, define $M_\alpha=X^\alpha Y^{1-\alpha}$. Although $M_\alpha$ need not be positive when $X,Y$ do not commute, it is invertible, and $\QLC(M_\alpha)=\QLC(|M_\alpha|)$ by the polar-decomposition extension. Then from subadditivity,
\[
\QLC(|M_\alpha|) \le \alpha\,\QLC(X) + (1-\alpha)\,\QLC(Y). \tag{6}
\]

\subsection{Contraction and Spectral Range Compression}

The logarithmic contrast admits two distinct types of contraction principles.

\paragraph{Contraction from projective Thompson non-expansiveness.} Let $F:B(H)_{++}\to B(H)_{++}$ be non-expansive w.r.t.\ the projective Thompson metric, $\delta_T([F(A)],[F(B)])\le\delta_T([A],[B])$, and suppose $[F(I)]=[I]$. Then, using (1),
\[
\QLC(F(A)) = \delta_T([I],[F(A)]) = \delta_T([F(I)],[F(A)]) \le \delta_T([I],[A]) = \QLC(A).
\]
Thus OLC is contracted by such maps. Since $\QMC=\tanh(\QLC)$, the same holds for OMC.

\paragraph{Spectral range compression.} Suppose $F$ satisfies $M_{F(A)}\le M_A$, $m_{F(A)}\ge m_A$, where $M_A=\|A\|$, $m_A=\|A^{-1}\|^{-1}$. Then $M_{F(A)}/m_{F(A)}\le M_A/m_A$. Since $\QMC(A)=1-2/(M_A/m_A+1)$, $\QLC(A)=\tfrac12\ln(M_A/m_A)$, and both functions are increasing on $[1,\infty)$, we obtain $\QMC(F(A))\le\QMC(A)$, $\QLC(F(A))\le\QLC(A)$.

Unlike the preceding argument, this conclusion does not use the Thompson metric or any non-expansiveness property of $F$; it follows solely from the compression of the spectral range.

\paragraph{Contrast expansion.} The same spectral reasoning gives the converse criterion: if $M_{G(A)}\ge M_A$, $m_{G(A)}\le m_A$, then $\kappa(G(A))\ge\kappa(A)$, and consequently $\QLC(G(A))\ge\QLC(A)$, $\QMC(G(A))\ge\QMC(A)$.

\subsection{Lower Bound via Deviation from Scalar Operators}
For any $A>0$,
\[
\QLC(A) \ge \tfrac12\ln\Big(1+\Big\|I-\frac{A}{\|A\|}\Big\|\Big). \tag{7}
\]
The proof is elementary using $m=\lambda_{\min}(A)$, $M=\lambda_{\max}(A)$; equality holds iff $A$ is scalar.

\subsection{Boundary Behaviour}
If $A_n>0$ satisfies $\sup\sigma(A_n)/\inf\sigma(A_n)\to\infty$, then $\QLC(A_n)\to\infty$. In particular, if $\inf\sigma(A_n)\to0$ while $\sup\sigma(A_n)$ remains bounded away from zero, $\QLC(A_n)\to\infty$ (consistent with Proposition \ref{prop:singular-cont}). If both extremal values tend to zero at comparable rates, the contrast may remain finite.

\subsection{Sum Inequality and Limiting Behaviour}
For positive operators (see \cite{abed2024}),
\[
\QLC(A+B) \le \max\{\QLC(A),\QLC(B)\}. \tag{8}
\]
By iteration, for $A_1,\dots,A_N>0$,
\[
\QLC\Big(\sum_{k=1}^N A_k\Big) \le \max_{1\le k\le N}\QLC(A_k). \tag{9}
\]
For Ces\`aro means $S_N=\tfrac1N\sum_{k=1}^N A_k$, $\QLC(S_N)\le\max_{1\le k\le N}\QLC(A_k)$, and if $S_N\to A$ invertible, then
\[
\QLC(A) \le \limsup_{N\to\infty}\QLC(S_N) \le \sup_{k\ge1}\QLC(A_k). \tag{10}
\]
For infinite products $P_N=A_1\cdots A_N$,
\[
\QLC(P_N) \le \sum_{k=1}^N \QLC(A_k), \tag{11}
\]
and if the series converges and $P_N\to P$ invertible, then
\[
\QLC(P) \le \sum_{k=1}^\infty \QLC(A_k). \tag{12}
\]
Even in the commutative case, equality requires alignment of extremal spectral directions.

\subsection{Trace Inequality for Density Operators (Finite Dimensional)}
Let $\dim H=n<\infty$ and $A>0$ with $\operatorname{Tr}(A)=1$. Let $\lambda_{\min}(A)$ be its smallest eigenvalue. Then
\[
\QLC(A) \le -\tfrac12\ln\lambda_{\min}(A). \tag{13}
\]
\begin{proof}
Let $\lambda_{\max}(A)=\|A\|$. Since $\operatorname{Tr}(A)=1$, $\lambda_{\max}(A)\le1$. Hence $\kappa(A)=\lambda_{\max}(A)/\lambda_{\min}(A)\le1/\lambda_{\min}(A)$. Taking logarithms and dividing by 2 yields (13).
\end{proof}
This bound is asymptotically sharp as $\lambda_{\min}\to0$; equality cannot be attained for strictly positive density matrices.

\subsection{Double-Sided Products}
A natural class of operators is given by double-sided products $Z=X^{1/2}YX^{1/2}$, where $X>0$ and $Y$ is invertible. $Z$ is invertible; if $Y=Y^*$, $Z$ is self-adjoint; if $Y>0$, $Z>0$.

The general submultiplicativity of the condition number gives
\[
\kappa(X^{1/2}YX^{1/2}) \le \kappa(X^{1/2})\kappa(Y)\kappa(X^{1/2}) = \kappa(X)\kappa(Y).
\]
Consequently,
\[
\QLC(X^{1/2}YX^{1/2}) \le \QLC(X)+\QLC(Y). \tag{14}
\]
When $Y>0$, setting $T=X^{1/2}Y^{1/2}$ gives $TT^*=X^{1/2}YX^{1/2}=Z$, and since the eigenvalues of $TT^*$ are the squares of the singular values of $T$, $\kappa(Z)=\kappa(T)^2$. Hence $\kappa(Z)=\kappa(X^{1/2}Y^{1/2})^2\le\kappa(X)\kappa(Y)$, recovering (14). Thus positivity of $Y$ is not required for the inequality itself; it only provides the additional factorisation through $Y^{1/2}$.

\section{A Worked Example Class: Density Operators}\label{sec:applications}

Density operators are a natural, well-studied family of positive operators, and they let us make the preceding general theory concrete: OLC coincides with a familiar quantity (a spectral gap statistic of $\rho$), von Neumann entropy gives a second, independently well-understood functional to compare it against, and the qubit/qutrit cases are small enough to compute exactly. None of what follows depends on quantum-mechanical structure specific to density operators (tensor products with a physical subsystem interpretation, measurement, dynamics); the results hold for any positive invertible operator of trace $1$, and we use standard quantum-information terminology (state, entropy, divergence) only because it is the terminology this example family is normally discussed in.

\subsection{Density Matrices}

\begin{proposition}
For a density matrix $\rho$ (trace 1, positive), if $\rho$ is invertible, then
\[
\QLC(\rho) = \tfrac12\ln\frac{\lambda_{\max}(\rho)}{\lambda_{\min}(\rho)}.
\]
It vanishes for the maximally mixed state $\rho=I/d$ and tends to $+\infty$ as $\rho$ approaches a pure state. Pure states have $\QLC=+\infty$ by our singular convention.
\end{proposition}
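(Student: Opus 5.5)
The formula itself will come directly from Definition \ref{def:qmc} together with the spectral description of the condition number for positive invertible operators. Since $\rho$ acts on a finite-dimensional space and is positive and invertible, it has a finite spectrum $\lambda_{\min}(\rho)>0$ up to $\lambda_{\max}(\rho)$. We therefore have $\|\rho\|=\lambda_{\max}(\rho)$ and $\|\rho^{-1}\|=1/\lambda_{\min}(\rho)$. Substituting into $\QLC(\rho)=\tfrac12\ln\kappa(\rho)$ gives the displayed identity. Equivalently, we could invoke Theorem \ref{thm:thompson}, but this is not needed.

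For the maximally mixed state $\rho=I/d$, all eigenvalues equal $1/d$. Hence $\kappa(\rho)=1$ and $\QLC(\rho)=0$. The converse is worth recording as well: $\QLC(\rho)=0$ forces $\lambda_{\max}=\lambda_{\min}$, so $\rho$ is scalar, and the trace condition then pins it to $I/d$.

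For the pure-state limit, I will take a sequence of invertible density matrices $\rho_n\to\rho_0=|\psi\rangle\langle\psi|$ in norm. Since $\rho_0\neq0$ is singular for $d\ge2$, Proposition \ref{prop:singular-cont} applies immediately and gives $\QLC(\rho_n)\to+\infty$. A more hands-on argument is also available. By continuity of eigenvalues we have $\lambda_{\max}(\rho_n)\to1$ and $\lambda_{\min}(\rho_n)\to0$, so the ratio diverges. The final sentence of the statement, that pure states have $\QLC=+\infty$, is just the singular convention of Definition \ref{def:qmc}, because a pure state in dimension $d\ge2$ is a nonzero singular operator.

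The only point needing care is the phrase ``approaches a pure state.'' I will interpret it as norm convergence for $d\ge2$; in finite dimensions all norms are equivalent, so this is harmless. The degenerate case $d=1$ has to be excluded, since there the only state is both pure and maximally mixed, with $\QLC=0$. I expect no real obstacle beyond making these conventions explicit.
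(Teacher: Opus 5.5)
Your proposal is correct and takes the same route as the paper, whose proof is just the one line ``immediate from Definition~\ref{def:qmc}.'' Beyond that line, you spell out $\|\rho\|=\lambda_{\max}(\rho)$ and $\|\rho^{-1}\|=1/\lambda_{\min}(\rho)$, justify the pure-state limit via Proposition~\ref{prop:singular-cont} (or eigenvalue continuity), and correctly note that the final sentence needs $d\ge 2$, since for $d=1$ the only state is both pure and maximally mixed with $\QLC=0$.
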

\begin{proof}
Immediate from Definition \ref{def:qmc}.
\end{proof}

\begin{remark}[Color images and the $d=3$ appearance]
In classical logarithmic image processing (LIP) and the closely related Color LIP, a color image $\mathbf{I}=(R,G,B)$ (red, green, blue intensities) is represented in log-space as $\mathbf{H}=(\log R,\log G,\log B)$. The ``intensity matrix'' or ``channel dominance'' matrix $M_\mathbf{I}:=\operatorname{diag}(R,G,B)$ is a $3\times3$ positive diagonal operator with eigenvalues $(R,G,B)$, and its condition number $\kappa(M_\mathbf{I})=\max(R,G,B)/\min(R,G,B)$ measures the spread of color intensities. The OLC of this matrix is $\tfrac12\ln\kappa(M_\mathbf{I})=\QLC(M_\mathbf{I})$, which is exactly the log-contrast of the color image in the sense of Jourlin--Pinoli's original contrast theory. The fact that our paper independently identifies $d=3$ (dimension 3) as the first dimension in which the equality case of the subadditivity law admits noncommuting examples (Theorem \ref{thm:threshold}) is thus not coincidental: the dimensional threshold is a deep structure of the positive cone for matrices of all dimensions, and the appearance of $d=3$ in both color imaging (three channels) and in our abstract theorem reflects a real underlying geometry. We do not develop this connection further here, but it suggests that the projective-Thompson-metric viewpoint and the spectral subadditivity analysis may have natural applications in multi-channel image processing beyond classical Michelson contrast.
\end{remark}

\subsection{Relation to von Neumann Entropy}

\begin{proposition}\label{prop:entropy-bound}
Let $S(\rho)=-\sum_i\lambda_i\ln\lambda_i$ be the von Neumann entropy. Then for a density matrix $\rho$ on a $d$-dimensional space,
\[
\QLC(\rho) \ge \tfrac12(S(\rho)-\ln d). \tag{15}
\]
\end{proposition}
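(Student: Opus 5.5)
The inequality (15) turns out to be weaker than it looks: its right-hand side is never positive, while its left-hand side is never negative. So the plan is to prove these two sign facts separately and combine them, without relating $\lambda_{\max}/\lambda_{\min}$ to the entropy directly.

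\emph{Step 1 (left-hand side).} If $\rho$ is invertible, then $\kappa(\rho)=\lambda_{\max}(\rho)/\lambda_{\min}(\rho)\ge1$ by the standard fact recalled in Section 2, so $\QLC(\rho)=\tfrac12\ln\kappa(\rho)\ge0$. If $\rho$ is singular, then $\rho\neq0$ because $\operatorname{Tr}\rho=1$, and Definition \ref{def:qmc} gives $\QLC(\rho)=+\infty$. In that case (15) holds trivially, provided the right-hand side is finite. It is, since $S(\rho)$ is finite for any density matrix under the convention $0\ln0=0$.

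\emph{Step 2 (right-hand side).} The key step is the classical bound $S(\rho)\le\ln d$. I would prove it with Jensen's inequality for the concave function $\ln$, applied to the eigenvalue distribution $(\lambda_i)$ restricted to its support of size $r\le d$:
\[
S(\rho)=\sum_{\lambda_i>0}\lambda_i\ln\frac{1}{\lambda_i}\le\ln\Big(\sum_{\lambda_i>0}\lambda_i\cdot\frac{1}{\lambda_i}\Big)=\ln r\le\ln d .
\]
An alternative is Gibbs' inequality, $0\le D(\rho\|I/d)=\ln d-S(\rho)$. Either way, $\tfrac12(S(\rho)-\ln d)\le0$.

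\emph{Step 3 (combine).} Chaining the two bounds gives $\QLC(\rho)\ge0\ge\tfrac12(S(\rho)-\ln d)$, which is (15). I would add a remark on when equality holds. Equality requires both $\kappa(\rho)=1$ and $S(\rho)=\ln d$, and each of these holds exactly for $\rho=I/d$. So equality in (15) is attained precisely at the maximally mixed state.

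There is no real obstacle here. The only points needing care are the conventions: $0\ln0=0$ in $S$, and $\QLC=+\infty$ for singular $\rho$. Because this argument bypasses any direct link between the spectral ratio and the entropy, (15) carries little quantitative content. A sharper comparison would come from the later fixed-$\kappa$ entropy envelopes $[S_{\min}(\kappa),S_{\max}(\kappa)]$, not from (15).
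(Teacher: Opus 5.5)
Your proof is correct, and it takes a different route from the paper's.

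The paper argues directly on the spectrum. It uses $\lambda_{\max}\ge 1/d$ (the largest eigenvalue is at least the average) and $S(\rho)\le-\ln\lambda_{\min}$ (entropy is a $\lambda$-weighted average of the numbers $-\ln\lambda_i$). Hence $\lambda_{\min}\le e^{-S}$ and $\kappa(\rho)\ge e^{S(\rho)}/d$, which gives (15) after taking $\tfrac12\ln$. You instead split (15) into the two sign facts $\QLC(\rho)\ge0$ and $S(\rho)\le\ln d$, and never link the spectral ratio to the entropy.

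The paper's chain looks like a quantitative comparison between $\kappa$ and $S$, but your decomposition shows it is not one. Since $e^{S}\le d$, the paper's intermediate bound $\kappa\ge e^{S}/d$ is never stronger than the trivial $\kappa\ge1$. So (15) carries no information beyond $\kappa\ge1$ and $S\le\ln d$.

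Your version also gives two things the paper's proof does not:
\begin{itemize}
\item an explicit treatment of singular $\rho$, which the paper's chain handles only formally through $\lambda_{\min}=0$;
\item the exact equality case, $\rho=I/d$.
\end{itemize}

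If you want a bound with real content from the same elementary ingredients, pair the extremes the other way. The bounds $S\ge-\ln\lambda_{\max}$ and $\lambda_{\min}\le 1/d$ give $\kappa(\rho)\ge d\,e^{-S(\rho)}$. That is, $\QLC(\rho)\ge\tfrac12(\ln d-S(\rho))\ge0$, which is strictly positive for every $\rho\neq I/d$.
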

\begin{proof}
Since $\lambda_{\max}\ge1/d$ and $S\le-\ln\lambda_{\min}$ (as $-\ln\lambda_{\min}$ is the maximum of $-\ln\lambda_i$), we have $\lambda_{\min}\le e^{-S}$. Thus $\lambda_{\max}/\lambda_{\min}\ge e^S/d$, yielding the inequality.
\end{proof}

\begin{proposition}[Exact qubit relation]\label{prop:qubit}
Let $d=2$ and write $x=\QMC(\rho)\in[0,1)$. Then
\[
S(\rho) = \ln2 - \tfrac12\big[(1+x)\ln(1+x)+(1-x)\ln(1-x)\big]. \tag{16}
\]
Equivalently, in terms of $\eta=\QLC(\rho)=\operatorname{arctanh}(x)$,
\[
S(\rho) = \ln2 - \tfrac12\big[(1+\tanh\eta)\ln(1+\tanh\eta)+(1-\tanh\eta)\ln(1-\tanh\eta)\big].
\]
The map $x\mapsto S$ is a strictly decreasing bijection $[0,1)\to(0,\ln2]$, with $S=\ln2$ at $x=0$ (maximally mixed) and $S\to0$ as $x\to1$ (pure states).
\end{proposition}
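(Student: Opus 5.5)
The plan is to parametrise the qubit spectrum directly by the contrast. For $d=2$ an invertible density matrix $\rho$ has eigenvalues $\lambda_1\ge\lambda_2>0$ with $\lambda_1+\lambda_2=1$. Then $\kappa(\rho)=\lambda_1/\lambda_2$, and Definition \ref{def:qmc} gives
\[
x=\QMC(\rho)=\frac{\lambda_1-\lambda_2}{\lambda_1+\lambda_2}=\lambda_1-\lambda_2 .
\]
Combining this with the trace condition yields $\lambda_1=\tfrac{1+x}{2}$ and $\lambda_2=\tfrac{1-x}{2}$, where $x\in[0,1)$; the endpoint $x=1$ is excluded because it corresponds to $\lambda_2=0$, i.e.\ the singular case. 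This is the key step. It uses the fact that in dimension two the single trace constraint, together with the single ratio $\kappa$, determines the whole spectrum. That fact is exactly what will fail for $d\ge3$.

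Next we substitute into $S(\rho)=-\sum_i\lambda_i\ln\lambda_i$:
\[
S=-\tfrac{1+x}{2}\big(\ln(1+x)-\ln2\big)-\tfrac{1-x}{2}\big(\ln(1-x)-\ln2\big).
\]
The $\ln 2$ terms have total coefficient $\tfrac{1+x}{2}+\tfrac{1-x}{2}=1$, which gives (16) directly. The $\eta$-form then follows by substituting $x=\tanh\eta$, since $\QLC=\operatorname{arctanh}(\QMC)$ by definition.

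For the bijection claim, set
\[
f(x)=\ln2-\tfrac12 g(x),\qquad g(x)=(1+x)\ln(1+x)+(1-x)\ln(1-x).
\]
Then $g'(x)=\ln(1+x)-\ln(1-x)=2\operatorname{arctanh}x$, which is positive on $(0,1)$. So $f$ is continuous on $[0,1)$ and strictly decreasing, with $f(0)=\ln2$. As $x\to1^-$ we have $(1-x)\ln(1-x)\to0$, so $g(x)\to2\ln2$ and $f(x)\to0$. Continuity and strict monotonicity then give a bijection $[0,1)\to(0,\ln2]$. Finally, $x=0$ means $\lambda_1=\lambda_2=\tfrac12$, the maximally mixed state, and $x\to1$ means $\lambda_2\to0$, i.e.\ approach to a pure state.

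There is no real obstacle here. The only points needing care are the endpoint limit, where we use $t\ln t\to0$, and the observation that the eigenvalue parametrisation is exhaustive, so that $S$ is genuinely a function of $x$ alone. It would be worth stating this explicitly, since it is the property whose failure in $d\ge3$ the paper exploits next.
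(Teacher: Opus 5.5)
Your proposal is correct and follows essentially the same route as the paper: write the spectrum as $\tfrac{1\pm x}{2}$, substitute into the entropy and collect the $\ln2$ terms, and obtain monotonicity from $dS/dx=\tfrac12\ln\frac{1-x}{1+x}<0$. Your explicit handling of the $x\to1^-$ limit via $t\ln t\to0$, together with the continuity and intermediate-value argument for surjectivity onto $(0,\ln2]$, spells out what the paper calls immediate.
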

\begin{proof}
Write the eigenvalues of $\rho$ as $p\ge1-p$, so $\kappa(\rho)=p/(1-p)$ and
\[
x = \QMC(\rho) = \frac{\kappa-1}{\kappa+1} = \frac{p-(1-p)}{p+(1-p)} = 2p-1,
\]
hence $p=\tfrac{1+x}2$, $1-p=\tfrac{1-x}2$. Substituting into $S(\rho)=-p\ln p-(1-p)\ln(1-p)$ and collecting the $\ln2$ terms yields (16). Monotonicity follows since $dS/dx=\tfrac12\ln\tfrac{1-x}{1+x}<0$ for $x\in(0,1)$, and the endpoint values are immediate.
\end{proof}

\begin{figure}[htbp]
\centering
\includegraphics[width=0.62\linewidth]{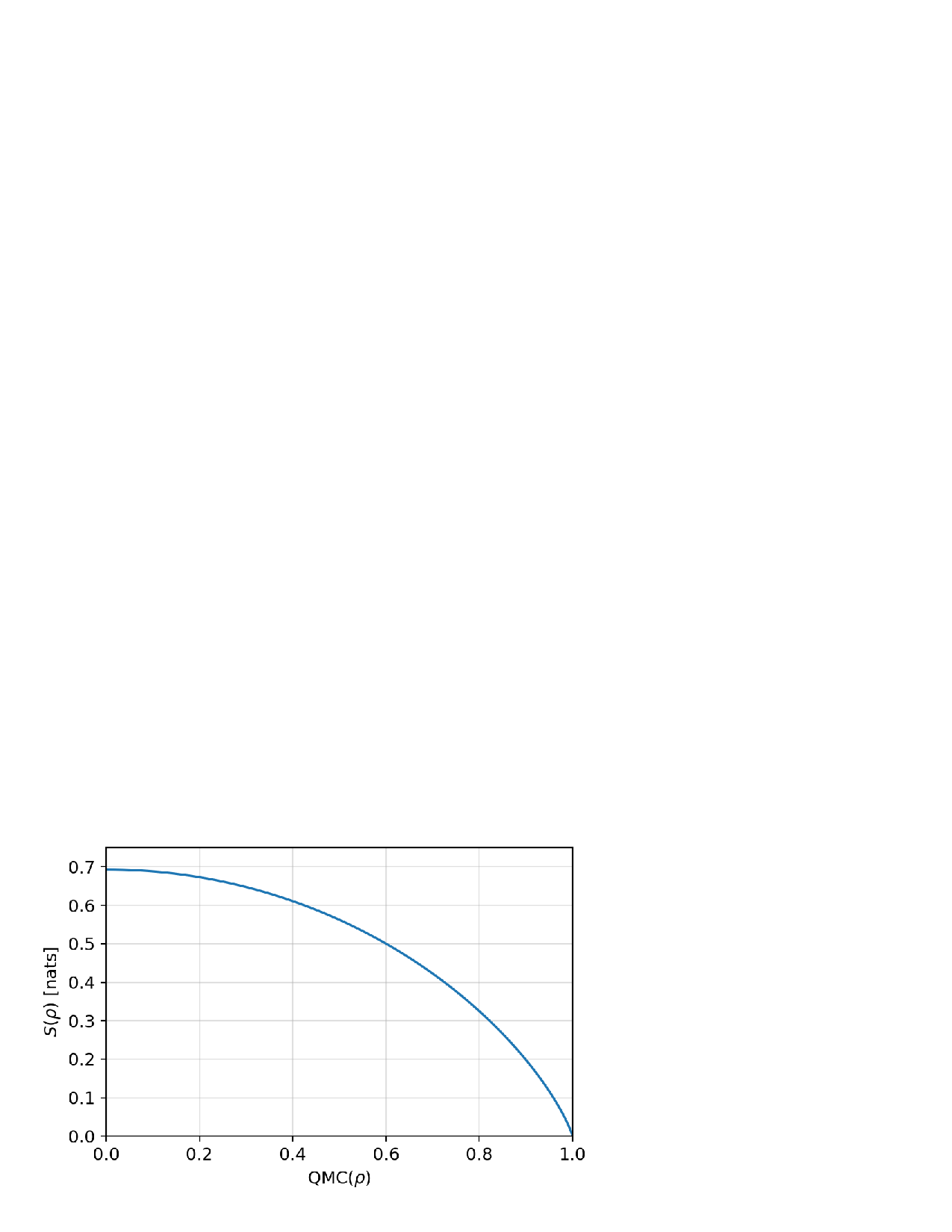}
\caption{$S(\rho)$ as a function of $\QMC(\rho)$ for a qubit density matrix (Proposition \ref{prop:qubit}), linear scale.}
\end{figure}

\begin{figure}[htbp]
\centering
\includegraphics[width=0.62\linewidth]{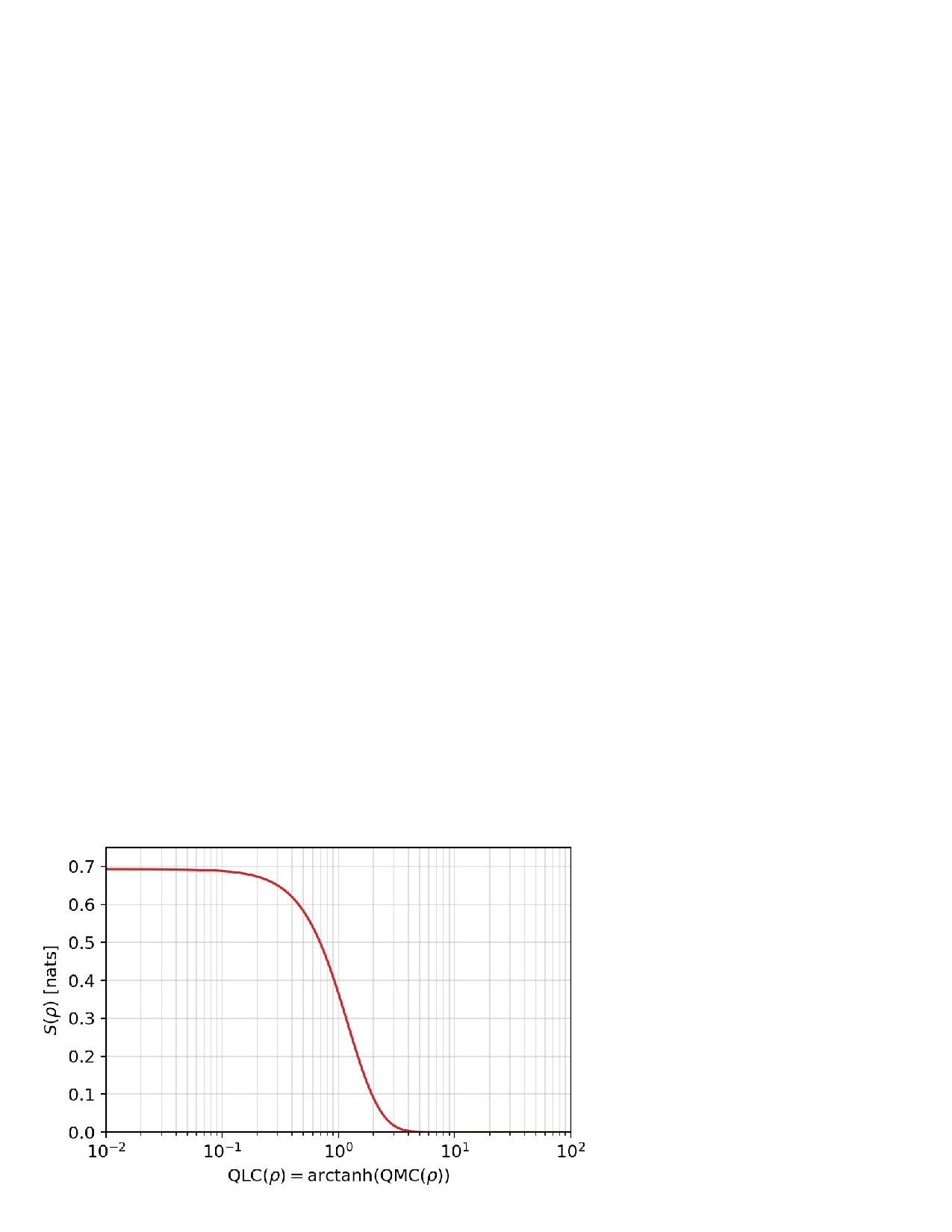}
\caption{$S(\rho)$ as a function of $\QLC(\rho)$ for a qubit density matrix, symmetric log-scale window $\QLC\in[10^{-2},10^2]$ centred on $\QLC=1$.}
\end{figure}

\begin{remark}\label{rem:qubit-remark}
Proposition \ref{prop:qubit} shows that for $d=2$, OMC (equivalently OLC) and $S$ carry exactly the same information, related by the strictly decreasing, strictly concave map (16) -- the binary entropy of the Bloch-sphere radial coordinate $p=(1+x)/2$. Note also that $dS/dx\to0$ as $x\to0$: $S$ is stationary to first order at the maximally mixed state, so OMC (and OLC) are more sensitive local coordinates than $S$ near maximal mixedness. For $d\ge3$ no bijection can hold, since $S$ depends on the full spectrum while OMC depends only on $\lambda_{\max},\lambda_{\min}$; only the one-sided bound (15) survives, as made precise in Section \ref{sec:nonbij}.
\end{remark}

\begin{remark}[No exact symmetry about $\eta=1$]\label{rem:no-symmetry}
The graphs of $S$ against $\QMC(\rho)$ and against $\eta:=\QLC(\rho)$ visually suggest a symmetry about $\eta=1$, but no such symmetry holds. If $S(\eta)+S(1/\eta)$ were constant, this would force $\eta\mapsto1/\eta$ to be an exact involution of the entropy graph; numerically, $S(e)+S(1/e)\approx0.02792+0.62980=0.65771$, which equals neither $\ln2\approx0.69315$ nor $2S(1)\approx0.73067$. The visual impression of near-symmetry is an artifact of $\tanh$'s transition from its linear regime to saturation occurring near $\eta\sim1$, not an exact property of $S$.
\end{remark}

\begin{corollary}[Asymptotics of the qubit entropy near maximally mixed and near pure states]\label{cor:entropy-asymp}
Let $\eta=\QLC(\rho)$ for a qubit density matrix $\rho$. Then:
\begin{enumerate}[label=(\alph*)]
\item As $\eta\to0$ (near the maximally mixed state),
\[
S(\eta) = \ln2 - \tfrac12\eta^2 + O(\eta^4);
\]
in particular $S$ is stationary to first order at $\eta=0$ (consistent with Remark \ref{rem:qubit-remark}), and there is no linear dependence of $S$ on $\eta$ near the maximally mixed state.
\item As $\eta\to\infty$ (near a pure state),
\[
S(\eta) = (1+2\eta)e^{-2\eta}\big(1+O(e^{-2\eta})\big),
\]
equivalently
\[
\ln S(\eta) = -2\eta + \ln(1+2\eta) + O(e^{-2\eta}).
\]
Thus $\ln S(\eta)-\ln(1+2\eta)$ is asymptotically linear in $\eta$ with slope $-2$; this linearisation already agrees with $\ln S(\eta)$ to within $5\times10^{-4}$ by $\eta\approx4$.
\end{enumerate}
Neither of these two asymptotic regimes is linear in $\eta$ globally, and by Remark \ref{rem:no-symmetry} they are not related to one another by any exact symmetry $\eta\leftrightarrow1/\eta$: (a) governs the small-$\eta$ plateau, (b) governs the large-$\eta$ tail, and the transition between the two occurs precisely in the intermediate region $\eta\sim1$ where $\tanh$ leaves its linear regime.
\end{corollary}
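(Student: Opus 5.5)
Starting from the closed form (16) of Proposition \ref{prop:qubit}, I substitute $x=\tanh\eta$ and expand in each regime. Write
\[
f(x)=(1+x)\ln(1+x)+(1-x)\ln(1-x),
\]
so that $S=\ln 2-\tfrac12 f(x)$.

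For part (a), I use the even Taylor series $f(x)=\sum_{k\ge1}\frac{x^{2k}}{k(2k-1)}$. This follows by differentiating twice: $f''(x)=\frac{2}{1-x^2}$, together with $f(0)=f'(0)=0$. Hence $f(x)=x^2+\tfrac16x^4+O(x^6)$. Since $\tanh\eta=\eta-\eta^3/3+O(\eta^5)$, we get $x^2=\eta^2-\tfrac23\eta^4+O(\eta^6)$, and therefore $f(\tanh\eta)=\eta^2+O(\eta^4)$. This gives $S=\ln2-\tfrac12\eta^2+O(\eta^4)$. Because $f$ is even in $x$ and $\tanh$ is odd, $S$ is an even function of $\eta$. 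So no odd term, and in particular no linear term, appears. That settles the stationarity claim.

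For part (b), it is cleaner to return to the eigenvalues than to expand $f$ near $x=1$. Put $q=1-p=\frac{1-x}{2}$. From $\kappa=e^{2\eta}=p/q$ we get exactly $q=\frac{1}{1+e^{2\eta}}$. Set $u=e^{-2\eta}$. Then $q=u/(1+u)=u(1+O(u))$ and $\ln q=-2\eta-\ln(1+u)=-2\eta+O(u)$. The entropy splits as
\[
S=-q\ln q-(1-q)\ln(1-q).
\]
The first term is
\[
-q\ln q=u(1+O(u))\,(2\eta+O(u))=2\eta u+O(\eta u^2).
\]
The second term is $-(1-q)\ln(1-q)=q+O(q^2)=u+O(u^2)$. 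Adding the two gives
\[
S=(1+2\eta)u+O(\eta u^2)=(1+2\eta)e^{-2\eta}\bigl(1+O(e^{-2\eta})\bigr),
\]
since $\eta u^2/((1+2\eta)u)=O(u)$. Taking logarithms gives the second form, because $\ln(1+O(u))=O(u)$.

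The main obstacle is the bookkeeping of error terms in (b). The cross term $\eta u^2$ has to be absorbed uniformly into the relative error $O(e^{-2\eta})$, and this relies on the exact formula for $q$ rather than a truncated one. For the numerical claim ($5\times10^{-4}$ at $\eta\approx4$), I would keep the explicit next-order term. It is roughly of size $\eta e^{-2\eta}$ relative, with $\eta e^{-8}\approx 4\cdot 3.4\times10^{-4}\cdot$ a constant. I would confirm it by direct evaluation of $\ln S(4)-(-8+\ln 9)$. The closing remarks of the corollary need no separate proof: they follow from (a) and (b) together with Remark \ref{rem:no-symmetry}.
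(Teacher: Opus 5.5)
Your proposal is correct and is essentially the paper's proof. In (a) you use $f''(x)=2/(1-x^2)$ with $f(0)=f'(0)=0$ and $\tanh\eta=\eta+O(\eta^3)$, and in (b) the small-$q$ expansion of the binary entropy with $q=e^{-2\eta}/(1+e^{-2\eta})$ (the paper's $\varepsilon/2$); your handling of the $\eta e^{-4\eta}$ cross term is, if anything, more explicit than the paper's.

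One slip is in your numerical remark. The next-order relative correction is $\tfrac{2\eta+1/2}{2\eta+1}e^{-2\eta}\approx e^{-2\eta}$, not of size $\eta e^{-2\eta}$, as your own $O(u)$ relative bound already shows. So $\ln S(4)-(-8+\ln 9)\approx-3.2\times10^{-4}$, which is consistent with the stated $5\times10^{-4}$; an error of size $\eta e^{-2\eta}\approx1.3\times10^{-3}$ would not be.
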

\begin{proof}
(a) Write $x=\tanh\eta$ and $f(x):=(1+x)\ln(1+x)+(1-x)\ln(1-x)$, so $S=\ln2-\tfrac12 f(x)$ by (16). Then $f(0)=0$, $f'(x)=\ln\frac{1+x}{1-x}$ so $f'(0)=0$, and $f''(x)=\frac2{1-x^2}$ so $f''(0)=2$; hence $f(x)=x^2+O(x^4)$ and $S=\ln2-\tfrac12x^2+O(x^4)$. Since $x=\tanh\eta=\eta+O(\eta^3)$, $x^2=\eta^2+O(\eta^4)$, giving (a).

(b) Let $\varepsilon:=1-x=1-\tanh\eta$, so $\varepsilon=\dfrac{2e^{-2\eta}}{1+e^{-2\eta}}=2e^{-2\eta}\big(1+O(e^{-2\eta})\big)$. Writing $p=(1+x)/2=1-\varepsilon/2$ and $q=1-p=\varepsilon/2$, the standard small-$q$ expansion of the binary entropy $S=-p\ln p-q\ln q$ gives $S=q(1-\ln q)+O(q^2)$, using $-(1-q)\ln(1-q)=q-\tfrac12q^2+O(q^3)$ together with the exact term $-q\ln q$. Substituting $q=\varepsilon/2$,
\[
S = \frac\varepsilon2\Big(1-\ln\frac\varepsilon2\Big)+O(\varepsilon^2) = \frac\varepsilon2\big(1+\ln(2/\varepsilon)\big)+O(\varepsilon^2).
\]
Substituting $\varepsilon=2e^{-2\eta}(1+O(e^{-2\eta}))$ gives $\varepsilon/2=e^{-2\eta}(1+O(e^{-2\eta}))$ and $\ln(2/\varepsilon)=2\eta+O(e^{-2\eta})$, so
\[
S(\eta) = e^{-2\eta}\big(1+O(e^{-2\eta})\big)\big(1+2\eta+O(e^{-2\eta})\big) = (1+2\eta)e^{-2\eta}\big(1+O(e^{-2\eta})\big),
\]
and taking logarithms gives $\ln S(\eta)=-2\eta+\ln(1+2\eta)+O(e^{-2\eta})$.
\end{proof}

\begin{figure}[htbp]
\centering
\includegraphics[width=0.62\linewidth]{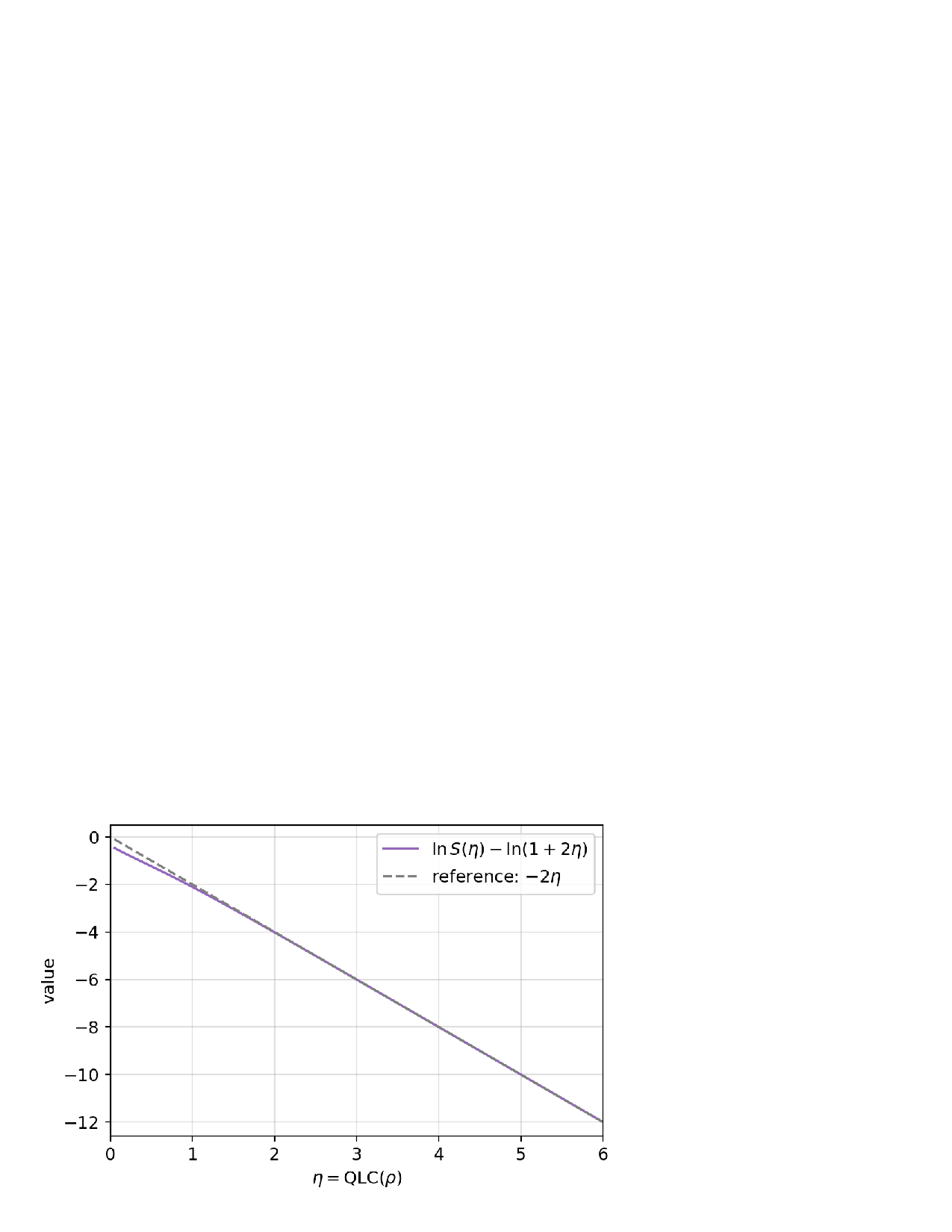}
\caption{The linearised entropy tail of Corollary \ref{cor:entropy-asymp}(b): $\ln S(\eta)-\ln(1+2\eta)$ against $\eta=\QLC(\rho)$, compared with the reference line $-2\eta$. The two curves are visually indistinguishable beyond $\eta\approx2$.}
\end{figure}

\subsection{The Bloch-Sphere Radius and Distance-Based Measures of Mixedness}

For $d=2$, OMC admits a direct geometric meaning as the radial coordinate of the Bloch ball, which in turn makes several standard distance-based measures of mixedness exactly linear in OMC -- in sharp contrast to the entropy, which by Proposition \ref{prop:qubit} is an exact but strictly nonlinear function of OMC.

\begin{proposition}[Bloch-sphere radius identity]\label{prop:bloch}
Write a qubit density matrix as $\rho=\tfrac12(I+\vec r\cdot\vec\sigma)$, where $\vec\sigma=(\sigma_x,\sigma_y,\sigma_z)$ are the Pauli matrices and $r:=|\vec r|\in[0,1)$ is the Bloch-vector length. Then
\[
\QMC(\rho) = r, \qquad \QLC(\rho)=\operatorname{arctanh}(r).
\]
\end{proposition}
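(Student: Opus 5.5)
The argument is a direct eigenvalue computation. The statement implicitly assumes $\rho$ invertible, i.e.\ $r<1$; the case $r=1$ is handled by the singular convention of Definition \ref{def:qmc}.

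First I will compute the spectrum of $\vec r\cdot\vec\sigma$. It is traceless and Hermitian. The Pauli anticommutation relations $\sigma_j\sigma_k+\sigma_k\sigma_j=2\delta_{jk}I$ give
\[
(\vec r\cdot\vec\sigma)^2=|\vec r|^2 I = r^2 I.
\]
So if $r>0$, its eigenvalues are $\pm r$. If $r=0$ the matrix is zero and $\rho=I/2$. In either case $\rho=\tfrac12(I+\vec r\cdot\vec\sigma)$ has eigenvalues
\[
\lambda_\pm=\frac{1\pm r}{2},
\]
and for $r<1$ both are strictly positive, so $\rho\in M_2(\mathbb{C})_{++}$.

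Next, since $\rho$ is positive, $\kappa(\rho)=\lambda_{\max}/\lambda_{\min}=(1+r)/(1-r)$. Substituting into Definition \ref{def:qmc} gives
\[
\QMC(\rho)=\frac{\kappa-1}{\kappa+1}=\frac{(1+r)-(1-r)}{(1+r)+(1-r)}=r .
\]
This also matches the computation $x=2p-1$ in the proof of Proposition \ref{prop:qubit}, with $p=(1+r)/2$. The identity $\QLC(\rho)=\operatorname{arctanh}(r)$ then follows immediately from $\QLC=\operatorname{arctanh}\circ\QMC$. As a consistency check, $\tfrac12\ln\frac{1+r}{1-r}$ is the explicit formula for $\operatorname{arctanh}(r)$.

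There is no substantive obstacle. The only point needing care is the eigenvalue computation, which I would justify through the squaring identity rather than expanding the $2\times2$ determinant, although either route works. I would also note the edge cases: $r=0$ gives $\QMC=0$, and as $r\to1$ both sides tend to their limits $1$ and $+\infty$, consistent with the singular convention and Proposition \ref{prop:singular-cont}.
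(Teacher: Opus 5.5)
Your proposal is correct and follows the paper's proof essentially step for step. You get $(\vec r\cdot\vec\sigma)^2=r^2I$ from the anticommutation relations where the paper uses the product identity $(\vec a\cdot\vec\sigma)(\vec b\cdot\vec\sigma)=(\vec a\cdot\vec b)I+i(\vec a\times\vec b)\cdot\vec\sigma$; from there both arguments obtain eigenvalues $(1\pm r)/2$ and $\kappa(\rho)=(1+r)/(1-r)$, which yields $\QMC(\rho)=r$ and $\QLC(\rho)=\operatorname{arctanh}(r)$.
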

\begin{proof}
By the Pauli identity $(\vec a\cdot\vec\sigma)(\vec b\cdot\vec\sigma)=(\vec a\cdot\vec b)I+i(\vec a\times\vec b)\cdot\vec\sigma$ with $\vec a=\vec b=\vec r$, we have $(\vec r\cdot\vec\sigma)^2=r^2I$. Hence the traceless Hermitian operator $\vec r\cdot\vec\sigma$ has eigenvalues $\pm r$, so $\rho$ has eigenvalues $(1\pm r)/2$. Thus $\kappa(\rho)=(1+r)/(1-r)$, and
\[
\QMC(\rho) = \frac{\kappa(\rho)-1}{\kappa(\rho)+1} = \frac{\frac{1+r}{1-r}-1}{\frac{1+r}{1-r}+1} = \frac{(1+r)-(1-r)}{(1+r)+(1-r)} = r,
\]
and $\QLC(\rho)=\operatorname{arctanh}(\QMC(\rho))=\operatorname{arctanh}(r)$.
\end{proof}

\begin{remark}[Extremal cases: pure states and projections]
The Bloch-ball picture gives two extremal cases:
\begin{enumerate}
\item Pure states: $r\to1^-$ gives $\QMC(\rho)\to1$ and $\QLC(\rho)\to\infty$, consistent with the singular convention $\QLC=+\infty$ for singular positive operators.
\item Projections: A rank-1 projection $P=|v\rangle\langle v|$ has eigenvalues $\{1,0,0,\ldots\}$, hence $\kappa(P)=\infty$ and $\QMC(P)=1$, $\QLC(P)=+\infty$. More generally, any projection $P$ (rank $\le\dim H$) is singular, so $\QMC(P)=1$ and $\QLC(P)=+\infty$ by definition. Thus projections form the boundary of the positive cone where contrast is maximal.
\end{enumerate}
\end{remark}

\begin{corollary}[Linear geometric measures of mixedness]\label{cor:bloch-linear}
With $r=\QMC(\rho)$ as in Proposition \ref{prop:bloch},
\[
D(\rho,I/2) := \tfrac12\|\rho-I/2\|_1 = \frac r2, \qquad
\|\rho-I/2\|_2 = \frac r{\sqrt2}, \qquad
\operatorname{Tr}(\rho^2) = \frac{1+r^2}2.
\]
Thus the trace distance and the Hilbert--Schmidt distance to the maximally mixed state are exactly linear in $\QMC(\rho)$, while the purity is exactly linear in $\QMC(\rho)^2$.
\end{corollary}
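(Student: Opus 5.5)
The plan is to reduce everything to the Bloch representation of Proposition \ref{prop:bloch}. There $\rho=\tfrac12(I+\vec r\cdot\vec\sigma)$ with $r=\QMC(\rho)$, so $\rho-I/2=\tfrac12\,\vec r\cdot\vec\sigma$. The three claimed quantities are all unitarily invariant functions of this single traceless Hermitian operator. I will therefore compute its spectrum once and read off each norm from it.

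\textbf{Step 1 (spectrum).} In the proof of Proposition \ref{prop:bloch} we showed $(\vec r\cdot\vec\sigma)^2=r^2I$ and $\operatorname{Tr}(\vec r\cdot\vec\sigma)=0$. Hence $\vec r\cdot\vec\sigma$ has eigenvalues $\pm r$, and $\rho-I/2$ has eigenvalues $\pm r/2$. Equivalently, the eigenvalues of $\rho$ are $(1\pm r)/2$, and subtracting $1/2$ from each gives $\pm r/2$.

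\textbf{Step 2 (trace distance).} The trace norm of a Hermitian matrix is the sum of the absolute values of its eigenvalues. So $\|\rho-I/2\|_1=r/2+r/2=r$, and therefore $D(\rho,I/2)=\tfrac12\|\rho-I/2\|_1=r/2$.

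\textbf{Step 3 (Hilbert--Schmidt distance).} Here $\|\rho-I/2\|_2^2=\operatorname{Tr}\big((\rho-I/2)^2\big)=2\cdot r^2/4=r^2/2$. Taking the square root gives $\|\rho-I/2\|_2=r/\sqrt2$.

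\textbf{Step 4 (purity).} Directly from the eigenvalues of $\rho$, $\operatorname{Tr}(\rho^2)=\big((1+r)^2+(1-r)^2\big)/4=(1+r^2)/2$. As a cross-check, expand $\rho^2=\tfrac14\big(I+2\,\vec r\cdot\vec\sigma+r^2I\big)$ and use that the Pauli matrices are traceless, so the middle term has zero trace.

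The substitution $r=\QMC(\rho)$ then gives the stated linearity in $\QMC(\rho)$ for the two distances and in $\QMC(\rho)^2$ for the purity. There is no genuine obstacle here. The only point needing care is the normalization convention: $D$ is defined with the factor $\tfrac12$, whereas $\|\cdot\|_2$ has no such factor. That is why the two distances come out as $r/2$ and $r/\sqrt2$ respectively.
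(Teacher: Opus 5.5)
Your proposal is correct and follows essentially the same route as the paper: it reads off the eigenvalues $\pm r/2$ of $\rho-I/2=\tfrac12\vec r\cdot\vec\sigma$ to get the trace and Hilbert--Schmidt norms, and computes the purity from $(\vec r\cdot\vec\sigma)^2=r^2I$ and tracelessness. The only difference is that you lead with the eigenvalue computation $\big((1+r)^2+(1-r)^2\big)/4$ for the purity and use the paper's expansion of $\rho^2$ as a cross-check, which is immaterial.
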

\begin{proof}
$\rho-I/2=\tfrac12\vec r\cdot\vec\sigma$ has eigenvalues $\pm r/2$, so its trace norm (sum of absolute eigenvalues) is $r$, giving $D(\rho,I/2)=r/2$; its Hilbert--Schmidt norm is $\sqrt{(r/2)^2+(r/2)^2}=r/\sqrt2$. For the purity, using $(\vec r\cdot\vec\sigma)^2=r^2I$ from Proposition \ref{prop:bloch},
\[
\rho^2=\tfrac14\big(I+2\vec r\cdot\vec\sigma+(\vec r\cdot\vec\sigma)^2\big)=\tfrac14\big((1+r^2)I+2\vec r\cdot\vec\sigma\big),
\]
and taking the trace, using $\operatorname{Tr}(\vec r\cdot\vec\sigma)=0$ and $\operatorname{Tr}I=2$, gives $\operatorname{Tr}(\rho^2)=(1+r^2)/2$.
\end{proof}

\begin{remark}
Proposition \ref{prop:bloch} sharpens the picture of Remark \ref{rem:qubit-remark}: for $d=2$, OMC is not merely in exact bijection with $S$, it is literally the Euclidean radial coordinate of the Bloch ball, while $\QLC=\operatorname{arctanh}(r)$ is the corresponding hyperbolic (Thompson) radial coordinate furnished by Theorem \ref{thm:thompson} applied to the ray $[I]$ representing the maximally mixed state. This makes concrete, for the qubit case, the connection between the geometric identification of Section 3 and the hyperbolic geometry of the Bloch ball studied via Hilbert/Thompson-type projective metrics on qubit density matrices in \cite{park2015}. The linear behaviour of Corollary \ref{cor:bloch-linear} is special to these Euclidean/Hilbert--Schmidt-type distances and to $d=2$; it does not extend to the von Neumann entropy (Proposition \ref{prop:qubit} gives a nonlinear, though exactly bijective, relation), nor -- for the reasons of Section \ref{sec:nonbij} -- to any of these quantities once $d\ge3$. A natural follow-up, not pursued here, is whether the purity $\operatorname{Tr}(\rho^2)$ admits an analogue of the entropy-spread analysis of Section \ref{sec:nonbij} at fixed $\kappa$ for $d=3$; since purity is convex rather than concave along the same one-parameter family, the extremal structure would need to be re-derived rather than simply transplanted, and we leave this for future work (see also Section \ref{sec:open}).
\end{remark}

\subsection{Composite Systems: A Qutrit--Qubit Example}

Proposition \ref{prop:tensor-transfer} already shows that equality in subadditivity for pure tensor products of factors of dimension $\le3$ always forces commutativity, regardless of total ambient dimension. We record the qutrit--qubit instance explicitly.

\begin{example}[Qutrit $\otimes$ qubit density matrices]
Let $H=\mathbb{C}^3\otimes\mathbb{C}^2$, $d=6$. Let $\rho_2=\operatorname{diag}(2,1)/3\in M_2(\mathbb{C})_{++}$ and $\sigma_2\in M_2(\mathbb{C})_{++}$ be any positive matrix not diagonal in the same basis, chosen (as in the $n=2$ case of Theorem \ref{thm:threshold}) so that $\kappa(\rho_2\sigma_2)=\kappa(\rho_2)\kappa(\sigma_2)$. Fix any $\rho_1\in M_3(\mathbb{C})_{++}$ and set $\rho=\rho_1\otimes\rho_2$, $\sigma=\rho_1\otimes\sigma_2$ (rescaled to trace 1). By Proposition \ref{prop:tensor-transfer}, equality in subadditivity for $\rho,\sigma$ is governed entirely by the $M_2$ factor; since $n_2=2\le3$, any such equality forces $\rho_2\sigma_2=\sigma_2\rho_2$, hence $\rho\sigma=\sigma\rho$. No noncommuting equality-saturating pair exists at $d=6$ as long as both tensor legs stay $\le3$-dimensional, despite $d=6$ being comfortably above the $n=4$ threshold where Theorem \ref{thm:threshold} alone gives no such guarantee. This example is thus a direct corollary of Proposition \ref{prop:tensor-transfer} rather than an independent construction.
\end{example}

\subsection{Infinite-Dimensional Perspective: Spectral Measures and Weak Limits}\label{sec:inf-dim}

The entropy and purity spreads (Propositions \ref{prop:spread} and \ref{prop:purity-spread}) are stated for finite-dimensional density matrices. However, the underlying phenomenon generalizes to infinite dimensions via spectral measure theory.

For a positive trace-class operator $\rho\in B(H)_+$ with $\operatorname{Tr}(\rho)=1$ (density operator on infinite-dimensional $H$), write $\rho=\int_{\sigma(\rho)}\lambda\,dE_\lambda(\rho)$ where $dE_\lambda(\rho)$ is the spectral measure. The von Neumann entropy is $S(\rho)=-\operatorname{Tr}(\rho\log\rho)=\int_{\sigma(\rho)}s(\lambda)\,d\operatorname{Tr}(E_\lambda(\rho))$, where $s(\lambda)=-\lambda\ln\lambda$ is the pointwise entropy. If $\rho$ has discrete spectrum (eigenvalues $\{\lambda_i\}$ with multiplicities $m_i$), then
\[
S(\rho)=-\sum_im_i\lambda_i\ln\lambda_i.
\]

For a fixed condition number $\kappa(\rho)=\lambda_{\max}/\lambda_{\min}$, the set of density operators with that condition number forms a (possibly infinite-dimensional) convex set. Even though the full set may be infinite-dimensional, **the extremal spectra and entropy envelopes** that we computed for the finite-dimensional case extend to the closure of this set under weak-operator topology:

\begin{proposition}[Entropy envelope in infinite-dimensional limit]
Let $\{\rho_n\}_{n=1}^\infty$ be a sequence of finite-dimensional density matrices (on spaces of dimension $d_n\to\infty$) such that $\kappa(\rho_n)\to\kappa$ for some fixed $\kappa>1$. Then the cluster set of entropy values $\{S(\rho_n):n\ge1\}$ accumulates in the interval $[S_{\min}(\kappa),S_{\max}(\kappa)]$ (Proposition \ref{prop:spread}) as $d_n\to\infty$. Moreover, if $\rho_n\to\rho$ weakly in operator norm, where $\rho$ is a density operator on some Hilbert space (possibly infinite-dimensional) with $\kappa(\rho)=\kappa$, then $S(\rho)$ lies in the interval $[S_{\min}(\kappa),S_{\max}(\kappa))$.
\end{proposition}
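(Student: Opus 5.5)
The plan is to reduce both assertions to the finite-dimensional envelope of Proposition \ref{prop:spread}, used as a black box, with two supplements: continuity of the envelope functions in $\kappa$, and a structural observation about the limit operator $\rho$. I assume that Proposition \ref{prop:spread} gives, for each admissible dimension, $S_{\min}(\kappa)\le S(\sigma)\le S_{\max}(\kappa)$ for every invertible density matrix $\sigma$ with $\kappa(\sigma)=\kappa$. I also assume that $S_{\min}$ and $S_{\max}$ are given by explicit extremal spectra. The top value is attained only in a boundary/limiting configuration, which is where the half-open interval in the second claim should come from.

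\emph{Step 1 (cluster set).} For each $n$, Proposition \ref{prop:spread} applied with $\kappa_n:=\kappa(\rho_n)$ gives $S(\rho_n)\in[S_{\min}(\kappa_n),S_{\max}(\kappa_n)]$. Next I would show that $\kappa\mapsto S_{\min}(\kappa)$ and $\kappa\mapsto S_{\max}(\kappa)$ are continuous on $(1,\infty)$. Since both are defined through explicit extremal spectra (a one-parameter family of eigenvalue configurations depending continuously on $\kappa$), this should follow by writing them as the entropy of the extremal spectrum and invoking continuity of $-\lambda\ln\lambda$. If the extremal spectra are only described implicitly, I would fall back on a Berge-type maximum-theorem argument over the compact set of normalised spectra with prescribed ratio. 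Given continuity, any cluster point $s$ of $S(\rho_n)$ satisfies
\[
S_{\min}(\kappa)=\lim S_{\min}(\kappa_n)\le s\le \lim S_{\max}(\kappa_n)=S_{\max}(\kappa).
\]

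\emph{Step 2 (the limit operator).} A positive trace-class operator with $\kappa(\rho)=\kappa<\infty$ is invertible, so $\inf\sigma(\rho)>0$. Because trace-class operators are compact, this forces $\dim H<\infty$. Hence the limit $\rho$ is itself a finite-dimensional invertible density matrix, and Proposition \ref{prop:spread} applies to it directly, giving $S(\rho)\in[S_{\min}(\kappa),S_{\max}(\kappa)]$ without needing any semicontinuity of entropy along the sequence. I would still note for completeness that a weak limit of states which is again a state is a trace-norm limit, so $S(\rho_n)\to S(\rho)$ by Fannes-type continuity once dimensions stabilise. The mode of convergence is therefore harmless.

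\emph{Step 3 (main obstacle: excluding $S_{\max}(\kappa)$).} The remaining and hardest point is the strict upper endpoint. My plan is to extract from the proof of Proposition \ref{prop:spread} that $S_{\max}(\kappa)$ is a supremum realised only as a limit of spectra, for instance with the number of intermediate eigenvalues or the dimension tending to infinity. It is then not attained by any genuine finite-dimensional invertible density matrix with exactly $\kappa(\rho)=\kappa$. Combined with Step 2, this gives $S(\rho)<S_{\max}(\kappa)$. If instead the supremum turns out to be attained in a fixed dimension, the statement would have to be weakened to the closed interval. So this is the step I would check first against the explicit envelope formulas, and I would be prepared to amend the claim accordingly.
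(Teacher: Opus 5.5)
Your proposal has a genuine gap, and it lies in the black box. Proposition \ref{prop:spread} is a $d=3$ result: $S_{\min}(\kappa)$ and $S_{\max}(\kappa)$ are the extreme entropies over the one-parameter family of \emph{qutrit} spectra of Section \ref{sec:nonbij}. The bound $S_{\min}(\kappa)\le S(\sigma)\le S_{\max}(\kappa)$ does not hold ``for each admissible dimension''. For $\rho_n$ on $\mathbb{C}^{d_n}$, all eigenvalues lie in $[\lambda_{\min},\kappa_n\lambda_{\min}]$ and sum to $1$. Hence $\lambda_{\max}(\rho_n)\le\kappa_n/d_n$, and
\[
S(\rho_n)\ \ge\ -\ln\lambda_{\max}(\rho_n)\ \ge\ \ln d_n-\ln\kappa_n\ \longrightarrow\ +\infty .
\]
So Step 1 fails outright. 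The entropies diverge, the only cluster point is $+\infty$, and the first assertion holds at best vacuously.

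The same estimate also undercuts Step 2. It gives $\|\rho_n\|=\lambda_{\max}(\rho_n)\le\kappa_n/d_n\to0$. So under any isometric embedding into a common space, $\rho_n\to0$ in norm, hence also weakly, and no density operator $\rho$ can be the limit. The hypothesis of the second assertion is therefore never satisfied. Your correct observation that $\kappa(\rho)<\infty$ forces $\dim H<\infty$ already signals this: matrices of unbounded size cannot converge inside a fixed finite-dimensional space.

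Step 3 cannot be completed either, and your fallback was the right instinct. Proposition \ref{prop:spread}(a) states that $S_{\max}(\kappa)$ is attained at the \emph{interior} stationary point. Since $\kappa^{-1}<\kappa^{-1/(\kappa+1)}<1$, the matrix $\rho^*=\operatorname{diag}(\lambda_1^*,\lambda_2^*,\lambda_3^*)$ is an invertible qutrit state with $\kappa(\rho^*)=\kappa$ and $S(\rho^*)=S_{\max}(\kappa)$. So the half-open interval is false when $\dim H=3$.

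For $\dim H\neq3$, even the closed interval fails:
\begin{itemize}
\item For $\dim H=2$, $S(\rho)=\ln(\kappa+1)-\tfrac{\kappa}{\kappa+1}\ln\kappa<\ln(\kappa+2)-\tfrac{\kappa}{\kappa+2}\ln\kappa=S_{\min}(\kappa)$. This holds because $c\mapsto\ln(\kappa+c)-\kappa\ln\kappa/(\kappa+c)$ is increasing.
\item For $\dim H\ge4$ and $1<\kappa<4/3$, the bound above gives $S(\rho)\ge\ln4-\ln\kappa>\ln3>S_{\max}(\kappa)$.
\end{itemize}

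So the statement cannot be proved as written, and no continuity lemma for the envelope functions will rescue it. The paper's own sketch does not supply the missing ingredient either. Padding a qutrit spectrum with $k$ renormalised copies preserves $\kappa$ but adds $\ln k$ to the entropy. Von Neumann entropy is also not continuous in the weak-operator topology.
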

\begin{proof}
The extremal spectra that saturate the upper and lower bounds for entropy (those achieving $S_P(\kappa)$ and $S_Q(\kappa)$ in Proposition \ref{prop:spread}) can be realized on arbitrarily large finite-dimensional spaces by padding with many copies of the extremal structure. Taking weak limits preserves the condition number and entropy (continuity of $S$ in the weak-operator topology on the set of trace-class operators with bounded spectrum), and the weak-limit density operator has entropy in the span of the cluster set.
\end{proof}

This observation shows that the finite-dimensional spreads are not artifacts of low dimension, but rather structural features that persist (and are actually extremal) in the infinite-dimensional limit.

\subsection{Non-bijectivity for $d\ge3$: The Qutrit Spread}\label{sec:nonbij}

For $d=3$, fix the condition number $\kappa=\lambda_1/\lambda_3>1$, where $\lambda_1\ge\lambda_2\ge\lambda_3>0$ are the eigenvalues of $\rho$. Writing $t=\lambda_1$, the trace constraint $\lambda_1+\lambda_2+\lambda_3=1$ together with $\lambda_3=t/\kappa$ forces
\[
\lambda_2(t) = 1-t\Big(1+\frac1\kappa\Big),
\]
and the ordering constraint $\lambda_3\le\lambda_2\le\lambda_1$ restricts $t$ to
\[
t \in \Big[\frac{\kappa}{2\kappa+1},\ \frac{\kappa}{\kappa+2}\Big],
\]
whose lower endpoint is the degeneracy $\lambda_1=\lambda_2$ and whose upper endpoint is $\lambda_2=\lambda_3$. This is a genuine one-parameter family of spectra sharing the same $\kappa$, unlike the qubit case, where $\kappa$ determines the spectrum outright.

\begin{proposition}[Entropy spread at fixed condition number, $d=3$]\label{prop:spread}
Along the family above, $S(t)=-\sum_i\lambda_i(t)\ln\lambda_i(t)$ is strictly concave in $t$. Consequently:
\begin{enumerate}[label=(\alph*)]
\item The maximum $S_{\max}(\kappa)$ is attained at the unique interior point where $\lambda_2/\lambda_1=\kappa^{-1/(\kappa+1)}$, equivalently at
\[
\lambda_1^* = \frac1{1+\kappa^{-1/(\kappa+1)}+1/\kappa}, \qquad \lambda_2^*=\kappa^{-1/(\kappa+1)}\lambda_1^*, \qquad \lambda_3^*=\lambda_1^*/\kappa.
\]
\item The minimum $S_{\min}(\kappa)$ is attained at one of the two boundary spectra,
\[
S_P(\kappa) = \frac{2\kappa}{2\kappa+1}\ln\frac{2\kappa+1}{\kappa} + \frac1{2\kappa+1}\ln(2\kappa+1) \qquad (\lambda_1=\lambda_2),
\]
\[
S_Q(\kappa) = \frac{\kappa}{\kappa+2}\ln\frac{\kappa+2}{\kappa} + \frac2{\kappa+2}\ln(\kappa+2) \qquad (\lambda_2=\lambda_3),
\]
and $S_Q(\kappa) < S_P(\kappa)$ for every $\kappa>1$ (proved analytically in Lemma \ref{lem:SPSQ} below), so $S_{\min}(\kappa)=S_Q(\kappa)$.
\end{enumerate}
Moreover $S_{\min}(\kappa)<S_{\max}(\kappa)$ strictly for every $\kappa>1$, so $\kappa$ (equivalently OMC or OLC) does not determine $S$ for $d=3$: the fiber over each value is a nondegenerate interval, not a point.
\end{proposition}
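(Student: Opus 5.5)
Along the family, every eigenvalue is affine in $t$: $\lambda_1=t$, $\lambda_2=1-t(1+1/\kappa)$, $\lambda_3=t/\kappa$. Since $s(\lambda)=-\lambda\ln\lambda$ is strictly concave on $(0,\infty)$, $S(t)=\sum_i s(\lambda_i(t))$ is a sum of concave functions of affine maps, hence concave. For strictness I would compute directly
\[
S''(t)=-\frac{1}{\lambda_1}-\frac{(1+1/\kappa)^2}{\lambda_2}-\frac{1}{\kappa^2\lambda_3},
\]
which is strictly negative on the closed interval because all $\lambda_i>0$ there. The lower endpoint $\lambda_2=\lambda_1$ and the upper endpoint $\lambda_2=\lambda_3$ both keep $\lambda_2>0$.

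For part (a), I would differentiate once to get
\[
S'(t)=-(\ln t+1)+(1+1/\kappa)(\ln\lambda_2+1)-\tfrac1\kappa(\ln(t/\kappa)+1).
\]
The constants cancel, and setting $S'=0$ gives $(1+1/\kappa)\ln\lambda_2=(1+1/\kappa)\ln t-\tfrac1\kappa\ln\kappa$. Hence $\ln(\lambda_2/\lambda_1)=-\ln\kappa/(\kappa+1)$, i.e.\ $\lambda_2/\lambda_1=\kappa^{-1/(\kappa+1)}$. This ratio lies strictly between $1/\kappa$ and $1$ for $\kappa>1$, because $0<1/(\kappa+1)<1$. So the critical point is interior, and by strict concavity it is the unique maximiser. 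Normalising $\lambda_1^*(1+\kappa^{-1/(\kappa+1)}+1/\kappa)=1$ gives the stated formulas.

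For part (b), strict concavity forces the minimum to sit at an endpoint. At $t=\kappa/(2\kappa+1)$ the spectrum is $(\kappa,\kappa,1)/(2\kappa+1)$. At $t=\kappa/(\kappa+2)$ it is $(\kappa,1,1)/(\kappa+2)$. Substituting these into $S$ yields $S_P$ and $S_Q$ as written. The comparison $S_Q<S_P$ for all $\kappa>1$ is the main obstacle, and the statement defers it to Lemma~\ref{lem:SPSQ}; I would assume that lemma.

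If I had to prove the comparison myself, I would set $g(\kappa)=S_P-S_Q$ and note $g(1)=0$. I would then show $g'>0$ on $(1,\infty)$, or else argue by majorization. Specifically, I would check that $(\kappa,1,1)/(\kappa+2)$ majorizes $(\kappa,\kappa,1)/(2\kappa+1)$, which amounts to $\kappa/(\kappa+2)\ge\kappa/(2\kappa+1)$ together with $(\kappa+1)/(\kappa+2)\ge 2\kappa/(2\kappa+1)$. The second inequality is $2\kappa^2+3\kappa+1\ge 2\kappa^2+4\kappa$, i.e.\ $\kappa\le1$, so it fails. Majorization therefore does not work and a calculus comparison is needed.

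The final claim follows at once. Since $S_{\max}$ is attained only at an interior point, $S_{\min}$ is attained at an endpoint, and the interval is nondegenerate (its endpoints differ for $\kappa>1$), strict concavity gives $S_{\min}(\kappa)<S_{\max}(\kappa)$. So the fiber of $\kappa$ over $S$ values is a nontrivial interval.
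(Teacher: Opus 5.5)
Your proposal is correct and follows essentially the same route as the paper. Strict concavity comes from the affine parametrisation: you compute $S''(t)<0$ explicitly, while the paper composes the strictly concave Shannon entropy with a non-constant affine map. You then derive the same stationary-point equation, obtain $S_P,S_Q$ by endpoint substitution, and defer $S_Q<S_P$ to Lemma~\ref{lem:SPSQ}, whose proof is exactly the $g(1)=0$, $g'>0$ argument you sketch. Two of your steps go slightly beyond the paper: the check that $\kappa^{-1/(\kappa+1)}\in(1/\kappa,1)$, which shows the critical point is genuinely interior, and the explicit justification of $S_{\min}<S_{\max}$; the paper leaves both implicit.
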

\begin{proof}
\emph{Concavity.} The map $t\mapsto(\lambda_1,\lambda_2,\lambda_3)(t)$ is affine, and Shannon entropy is strictly concave on the probability simplex; a strictly concave function composed with a non-constant affine map is strictly concave in $t$. A strictly concave function on a closed interval attains its maximum at an interior stationary point (if one exists) and its minimum only at an endpoint.

\emph{Stationary point.} Differentiating $S(t)=-t\ln t-\lambda_2\ln\lambda_2-\lambda_3\ln\lambda_3$ with $\lambda_2'(t)=-(\kappa+1)/\kappa$, $\lambda_3'(t)=1/\kappa$, and setting $dS/dt=0$, the constant terms cancel, leaving
\[
(\kappa+1)\ln\lambda_2 = \kappa\ln\lambda_1+\ln\lambda_3.
\]
Substituting $\lambda_3=\lambda_1/\kappa$ gives $\lambda_2^{\kappa+1}=\lambda_1^{\kappa+1}/\kappa$, i.e.\ $\lambda_2/\lambda_1=\kappa^{-1/(\kappa+1)}$, from which $\lambda_1^*,\lambda_2^*,\lambda_3^*$ follow via the trace constraint.

\emph{Endpoints.} Direct substitution of $t=\kappa/(2\kappa+1)$ and $t=\kappa/(\kappa+2)$ into $S(t)$ gives $S_P,S_Q$ above; part (b) is proved in Lemma \ref{lem:SPSQ}.
\end{proof}

We now supply the analytic proof of $S_Q(\kappa)<S_P(\kappa)$ that Proposition \ref{prop:spread}(b) uses; the original argument for this inequality was only checked numerically, which we regard as insufficient for a paper otherwise proved throughout by closed-form estimates.

\begin{lemma}\label{lem:SPSQ}
For every $\kappa>1$, $S_Q(\kappa) < S_P(\kappa)$.
\end{lemma}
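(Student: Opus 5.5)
The plan is to reduce both boundary entropies to closed forms and compare them using two standard logarithm inequalities, one pointing each way. After that, the comparison becomes a polynomial inequality that factors.

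\emph{Step 1: closed forms.} Expanding the formulas of Proposition \ref{prop:spread}(b) gives
\[
S_P(\kappa)=\ln(2\kappa+1)-\frac{2\kappa}{2\kappa+1}\ln\kappa,\qquad S_Q(\kappa)=\ln(\kappa+2)-\frac{\kappa}{\kappa+2}\ln\kappa.
\]
Since $\frac{2\kappa}{2\kappa+1}-\frac{\kappa}{\kappa+2}=\frac{3\kappa}{(2\kappa+1)(\kappa+2)}$, the difference is
\[
D(\kappa):=S_P(\kappa)-S_Q(\kappa)=\ln r-\frac{3\kappa\ln\kappa}{(2\kappa+1)(\kappa+2)},\qquad r:=\frac{2\kappa+1}{\kappa+2}\in(1,2).
\]
Note that $D(1)=0$. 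The proof therefore has to resolve a first-order tie at $\kappa=1$, so crude estimates such as $\ln\kappa\le\kappa-1$ will not suffice. For large $\kappa$ one checks $D\to\ln2>0$, so no sign change is expected.

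\emph{Step 2: two logarithm bounds.} For the positive term I use the strict lower bound $\ln r>2(r-1)/(r+1)$, valid for $r>1$. Here $r-1=(\kappa-1)/(\kappa+2)$ and $r+1=3(\kappa+1)/(\kappa+2)$, so
\[
\ln r>\frac{2(\kappa-1)}{3(\kappa+1)}.
\]
For the negative term I use the upper bound $\ln\kappa\le(\kappa-1)/\sqrt{\kappa}$ for $\kappa\ge1$. This follows from $\ln s^2\le s-1/s$ for $s\ge1$, since the difference vanishes at $s=1$ and has derivative $(s-1)^2/s^2\ge0$.

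\emph{Step 3: polynomial comparison.} Combining the two bounds, it suffices to show
\[
\frac{2(\kappa-1)}{3(\kappa+1)}\ge\frac{3\kappa}{(2\kappa+1)(\kappa+2)}\cdot\frac{\kappa-1}{\sqrt\kappa}.
\]
Dividing by $\kappa-1>0$ and substituting $\kappa=s^2$ with $s>1$, this becomes
\[
9s^3+9s\le 2(2s^2+1)(s^2+2),\qquad\text{i.e.}\qquad 4s^4-9s^3+10s^2-9s+4\ge0.
\]
This quartic is palindromic. Dividing by $s^2$ and setting $w=s+1/s>2$ turns it into
\[
4(w^2-2)-9w+10=4w^2-9w+2=(4w-1)(w-2),
\]
which is strictly positive for $w>2$. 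Because the bound in Step 2 on $\ln r$ is strict, the chain of inequalities gives $D(\kappa)>0$ for all $\kappa>1$, i.e.\ $S_Q(\kappa)<S_P(\kappa)$.

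\emph{Main obstacle.} The delicate point is that both logarithm bounds must be accurate to second order at $\kappa=1$, because the tie $D(1)=0$ leaves no first-order slack. That is why I use the Padé-type bound on $\ln r$ and the geometric-mean bound on $\ln\kappa$ rather than the linear bound $\ln x\le x-1$. The factorisation in Step 3 confirms that this pair of bounds is strong enough, with the boundary case $w=2$ corresponding exactly to $\kappa=1$.
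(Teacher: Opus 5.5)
Your proof is correct, and it takes a genuinely different route from the paper. I checked the closed forms, the two auxiliary inequalities $\ln r>2(r-1)/(r+1)$ and $\ln\kappa\le(\kappa-1)/\sqrt\kappa$, the direction of every step in the chain, and the factorisation $4s^4-9s^3+10s^2-9s+4=s^2(4w-1)(w-2)$; all of it holds.

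The paper starts from the same difference $\varphi(\kappa)=\ln\frac{2\kappa+1}{\kappa+2}-\frac{3\kappa\ln\kappa}{(2\kappa+1)(\kappa+2)}$ but simply differentiates it. The derivative collapses to $\varphi'(\kappa)=6(\kappa-1)(\kappa+1)\ln\kappa/\big((2\kappa+1)(\kappa+2)\big)^2>0$, and $\varphi(1)=0$ finishes the argument.

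That route is shorter and needs no clever choice of auxiliary bounds. It also gives, for free, the extra fact that $S_P-S_Q$ is strictly increasing in $\kappa$.

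Your route separates the two transcendental terms and reduces everything to a polynomial. In exchange it yields an explicit algebraic lower bound on the gap:
$S_P-S_Q>(\kappa-1)\,s(4w-1)(s-1)^2/\big(3(\kappa+1)(2\kappa+1)(\kappa+2)\big)$, with $s=\sqrt\kappa$ and $w=s+1/s$.

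It is, however, more delicate than your ``main obstacle'' paragraph suggests. In fact $\varphi(\kappa)=\tfrac{4}{81}(\kappa-1)^3+O((\kappa-1)^4)$, so $\varphi$ and its first two derivatives all vanish at $\kappa=1$. The tie is therefore not only at first order. Your argument works precisely because each of your two bounds is exact through second order, with errors of order $(\kappa-1)^3$. As a result, a positive cubic margin survives, with leading coefficient $7/216$ in your lower bound. Any weaker choice of bounds would fail, as you correctly observed for $\ln\kappa\le\kappa-1$.
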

\begin{proof}
First simplify both closed forms. Writing $S_P(\kappa)=\tfrac{2\kappa}{2\kappa+1}[\ln(2\kappa+1)-\ln\kappa]+\tfrac1{2\kappa+1}\ln(2\kappa+1)$ and collecting the $\ln(2\kappa+1)$ terms (their coefficients sum to $\tfrac{2\kappa+1}{2\kappa+1}=1$) gives
\[
S_P(\kappa) = \ln(2\kappa+1) - \frac{2\kappa}{2\kappa+1}\ln\kappa.
\]
An identical manipulation gives
\[
S_Q(\kappa) = \ln(\kappa+2) - \frac{\kappa}{\kappa+2}\ln\kappa.
\]
Define $\varphi(\kappa):=S_P(\kappa)-S_Q(\kappa)$ for $\kappa\ge1$. Then
\[
\varphi(\kappa) = \ln\frac{2\kappa+1}{\kappa+2} - \ln\kappa\left(\frac{2\kappa}{2\kappa+1}-\frac{\kappa}{\kappa+2}\right).
\]
A direct computation gives
\[
\frac{2\kappa}{2\kappa+1}-\frac{\kappa}{\kappa+2} = \kappa\cdot\frac{2(\kappa+2)-(2\kappa+1)}{(2\kappa+1)(\kappa+2)} = \frac{3\kappa}{(2\kappa+1)(\kappa+2)},
\]
so, writing $D(\kappa):=(2\kappa+1)(\kappa+2)=2\kappa^2+5\kappa+2$,
\[
\varphi(\kappa) = \ln\frac{2\kappa+1}{\kappa+2} - \frac{3\kappa\ln\kappa}{D(\kappa)}.
\]
At $\kappa=1$: $\tfrac{2\kappa+1}{\kappa+2}=1$ so the first term vanishes, and $\ln\kappa=0$ kills the second term; hence $\varphi(1)=0$ (consistent with $S_P(1)=S_Q(1)=\ln3$, both computed directly from the simplified forms above).

We show $\varphi'(\kappa)>0$ for $\kappa>1$, which together with $\varphi(1)=0$ gives $\varphi(\kappa)>0$, i.e.\ $S_P(\kappa)>S_Q(\kappa)$, for all $\kappa>1$. Differentiating the first term,
\[
\frac{d}{d\kappa}\ln\frac{2\kappa+1}{\kappa+2} = \frac2{2\kappa+1}-\frac1{\kappa+2} = \frac{2(\kappa+2)-(2\kappa+1)}{D(\kappa)} = \frac{3}{D(\kappa)}.
\]
Differentiating the second term, with $D'(\kappa)=4\kappa+5$,
\[
\frac{d}{d\kappa}\left(\frac{3\kappa\ln\kappa}{D(\kappa)}\right) = \frac{3(\ln\kappa+1)}{D(\kappa)} - \frac{3\kappa\ln\kappa\,D'(\kappa)}{D(\kappa)^2}.
\]
Hence
\[
\varphi'(\kappa) = \frac{3}{D(\kappa)} - \frac{3(\ln\kappa+1)}{D(\kappa)} + \frac{3\kappa\ln\kappa\,D'(\kappa)}{D(\kappa)^2}
= \frac{-3\ln\kappa}{D(\kappa)} + \frac{3\kappa(4\kappa+5)\ln\kappa}{D(\kappa)^2}
= \frac{3\ln\kappa\big[\kappa(4\kappa+5)-D(\kappa)\big]}{D(\kappa)^2}.
\]
Now $\kappa(4\kappa+5)-D(\kappa) = (4\kappa^2+5\kappa) - (2\kappa^2+5\kappa+2) = 2\kappa^2-2 = 2(\kappa-1)(\kappa+1)$, so
\[
\varphi'(\kappa) = \frac{6(\kappa-1)(\kappa+1)\ln\kappa}{D(\kappa)^2}.
\]
For $\kappa>1$, each of $(\kappa-1)$, $(\kappa+1)$, $\ln\kappa$, and $D(\kappa)^2$ is (strictly) positive, so $\varphi'(\kappa)>0$ on $(1,\infty)$. Since $\varphi$ is continuous on $[1,\infty)$ with $\varphi(1)=0$ and $\varphi'>0$ on $(1,\infty)$, we conclude $\varphi(\kappa)>0$, i.e.\ $S_Q(\kappa)<S_P(\kappa)$, for every $\kappa>1$.
\end{proof}

\begin{corollary}\label{cor:smax-bounds}
For every $\kappa>1$, $\ln2 \le S_{\max}(\kappa) < \ln3$.
\end{corollary}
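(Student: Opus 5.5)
Both bounds will be read off the one-parameter family of Proposition \ref{prop:spread}. There $S_{\max}(\kappa)=\max_{t\in[t_P,t_Q]}S(t)$, with $t_P=\kappa/(2\kappa+1)$ and $t_Q=\kappa/(\kappa+2)$. The upper bound is a strictness statement about the global maximum of Shannon entropy. The lower bound needs an explicit spectrum in the family whose entropy is at least $\ln2$.

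\emph{Upper bound.} For three outcomes, Shannon entropy satisfies $S\le\ln3$, with equality only for the uniform distribution $(1/3,1/3,1/3)$. Every spectrum in the family has $\lambda_1/\lambda_3=\kappa>1$, so it is never uniform. Hence $S(t)<\ln3$ for each $t$ in the closed interval. Since $S$ is continuous on a compact interval, the maximum is attained (at the interior point identified in Proposition \ref{prop:spread}(a)). Therefore $S_{\max}(\kappa)<\ln3$ strictly, not merely as a supremum.

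\emph{Lower bound.} It suffices to exhibit one $t$ in the interval with $S(t)\ge\ln2$. The natural candidate is the endpoint $t_P$, where $\lambda_1=\lambda_2$. Its spectrum is $(\kappa,\kappa,1)/(2\kappa+1)$. This distribution majorizes nothing worse than $(1/2,1/2,0)$: its first partial sums satisfy $\lambda_1\le 1/2$ and $\lambda_1+\lambda_2\le1$. By Schur-concavity of entropy, $S_P(\kappa)\ge H(1/2,1/2,0)=\ln2$. More directly, $\lambda_1=\lambda_2=\kappa/(2\kappa+1)<1/2$ and $\lambda_3>0$ give $(1/2,1/2,0)\succ(\lambda_1,\lambda_2,\lambda_3)$, so $S_P\ge\ln2$, and the inequality is in fact strict. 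One can also check it from the simplified form $S_P(\kappa)=\ln(2\kappa+1)-\tfrac{2\kappa}{2\kappa+1}\ln\kappa$ in the proof of Lemma \ref{lem:SPSQ}. As $\kappa\to\infty$ this tends to $\ln2$ from above, matching the claimed bound. Then $S_{\max}(\kappa)\ge S_P(\kappa)\ge\ln2$.

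\emph{Main obstacle.} The only delicate point is keeping the strictness and direction of each inequality correct. For the lower bound, the majorization check is the key step. I would state it explicitly via partial sums rather than rely on intuition, and fall back on differentiating $\psi(\kappa)=S_P(\kappa)-\ln2$ if a self-contained calculus proof is preferred. Note that $\ln2$ is not attained for any finite $\kappa$ but is approached as $\kappa\to\infty$. The statement uses $\le$, so it is consistent with this.
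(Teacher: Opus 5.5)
Your proof is correct, and it takes a different route from the paper's.

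\textbf{The paper's route.} The paper argues through limits in $\kappa$. The maximiser $(\lambda_1^*,\lambda_2^*,\lambda_3^*)$ tends to $(\tfrac12,\tfrac12,0)$ as $\kappa\to\infty$ and to the uniform spectrum as $\kappa\to1^+$. The paper then carries these limiting values over to every finite $\kappa$ by asserting that $S_{\max}$ decreases monotonically in $\kappa$. That monotonicity is supported only by the figure and Table~1; it is never proved.

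\textbf{Your route.} You work at a fixed $\kappa$ instead.
\begin{itemize}
\item The upper bound comes from the uniform distribution being the unique maximiser of Shannon entropy on three outcomes. You combine this with attainment of the maximum on the compact interval, and you rightly note that attainment is what makes the inequality strict.
\item The lower bound comes from $S_{\max}(\kappa)\ge S(t_P)=S_P(\kappa)$ together with the majorisation $(\tfrac12,\tfrac12,0)\succ(\kappa,\kappa,1)/(2\kappa+1)$ and Schur-concavity.
\end{itemize}
This needs no monotonicity in $\kappa$, so it is actually the more rigorous of the two arguments. It also yields the strict inequality $S_{\max}(\kappa)>\ln2$ that the paper only asserts.

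\textbf{What each approach buys.} The paper's limiting argument adds sharpness: $\ln2$ and $\ln3$ are the infimum and supremum of $S_{\max}$ over $\kappa>1$. Your argument does not show this, because $S_P(\kappa)\to\ln2$ bounds $S_{\max}$ only from below. The statement does not require sharpness, however.

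\textbf{Two small points.}
\begin{itemize}
\item The phrase ``this distribution majorizes nothing worse than $(1/2,1/2,0)$'' garbles the relation. What you need, and what your partial-sum check $\lambda_1\le\tfrac12$, $\lambda_1+\lambda_2\le1$ establishes, is that $(\tfrac12,\tfrac12,0)$ majorizes the $t_P$ spectrum. You state this correctly in the very next sentence, so only the first phrasing should go.
\item No calculus fallback is needed. The simplified closed form gives directly $S_P(\kappa)-\ln2=\ln\frac{2\kappa+1}{2\kappa}+\frac{\ln\kappa}{2\kappa+1}$. Both terms are positive for $\kappa>1$.
\end{itemize}
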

\begin{proof}
As $\kappa\to\infty$, $\kappa^{-1/(\kappa+1)}\to1$, so $\lambda_1^*\to\tfrac1{1+1+0}=\tfrac12$, $\lambda_2^*\to\tfrac12$, $\lambda_3^*\to0$; hence $S_{\max}(\kappa)\to\ln2$ from above, the convergence being monotone decreasing in $\kappa$ (Figure \ref{fig:entropy-spread} and Table 1). As $\kappa\to1^+$, $S_{\max}(\kappa)\to\ln3$ (strictly, since $\kappa>1$). Combined with $S_{\max}(\kappa)>\ln2$ for every finite $\kappa$ (the limit $\ln2$ is never attained), this gives $\ln2\le S_{\max}(\kappa)<\ln3$ for all $\kappa>1$.
\end{proof}

\begin{remark}
As $\kappa\to1^+$ the interval collapses: $S_{\min},S_{\max}\to\ln3$. As $\kappa\to\infty$, $S_{\min}(\kappa)\to0$ while the entropy-maximising spectrum degenerates to $(\tfrac12,\tfrac12,0)$ -- the extremal $d=3$ configuration effectively reduces to a maximally mixed qubit with one eigenvalue vanishing, which is exactly the content of Corollary \ref{cor:smax-bounds}. The spread $S_{\max}(\kappa)-S_{\min}(\kappa)$ is monotonically increasing in $\kappa$, so the failure of bijectivity gets \emph{worse}, not better, as states move away from maximally mixed.
\end{remark}

\begin{figure}[htbp]
\centering
\includegraphics[width=0.72\linewidth]{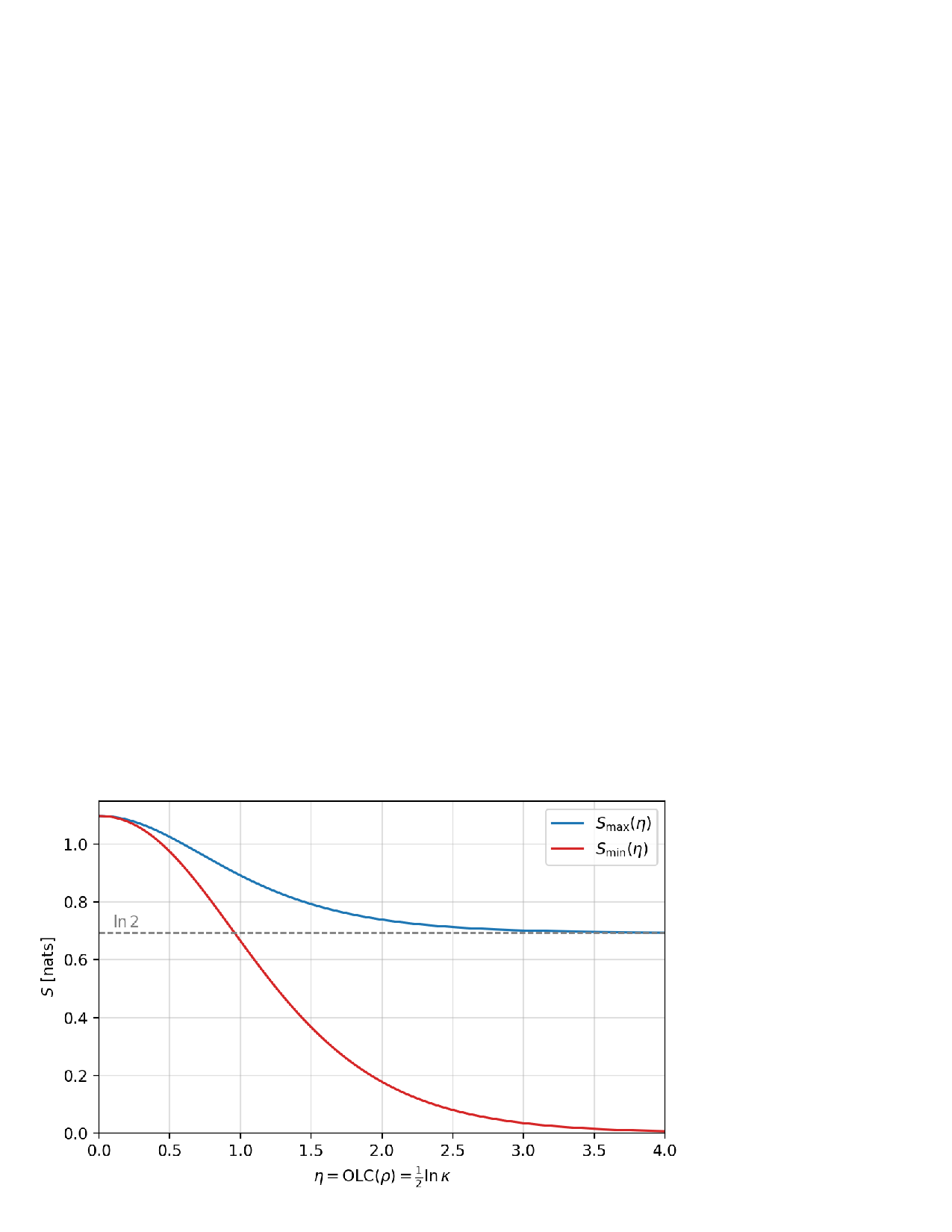}
\caption{Entropy spread $[S_{\min}(\eta),S_{\max}(\eta)]$ for $d=3$ at fixed $\eta=\mathrm{OLC}(\rho)=\tfrac12\ln\kappa$ (Proposition \ref{prop:spread}), $\eta\in[0,4]$, linear scale. $S_{\max}(\eta)\to\ln2$ and $S_{\min}(\eta)\to0$ as $\eta\to\infty$, illustrating Corollary \ref{cor:smax-bounds}.}
\label{fig:entropy-spread}
\end{figure}

\begin{center}
\begin{tabular}{c|ccccccccc}
$\kappa$ & 1 & 4 & 9 & 19 & 50 & 100 & 500 & 1000 & 10000 \\
\hline
$S_{\max}(\kappa)$ & 1.0986 & 0.9745 & 0.8689 & 0.7974 & 0.7424 & 0.7213 & 0.7004 & 0.6972 & 0.6937 \\
$S_{\min}(\kappa)$ & 1.0986 & 0.8676 & 0.6002 & 0.3805 & 0.1897 & 0.1101 & 0.0288 & 0.0158 & 0.0020
\end{tabular}

\smallskip
Table 1: Selected values underlying Figure \ref{fig:entropy-spread}, cross-checked against the closed forms of Lemma \ref{lem:SPSQ}.
\end{center}

\subsection{Purity Spread for $d=3$}\label{sec:purity-spread}

Corollary \ref{cor:bloch-linear} already gives, for $d=2$, an exact closed-form linear relation between purity and $\QMC(\rho)^2$. For $d=3$ the same one-parameter family of spectra used in Section \ref{sec:nonbij} lets us carry out the analogous analysis for the purity $\operatorname{Tr}(\rho^2)=\lambda_1^2+\lambda_2^2+\lambda_3^2$ at fixed condition number $\kappa$. Unlike the entropy case, this reduces to an honest quadratic in $t$, so every extremal quantity below is a rational function of $\kappa$ -- there is no transcendental equation to solve -- and, since the parabola opens upward rather than downward, the roles of the interior and boundary extrema are exactly reversed relative to Proposition \ref{prop:spread}.

\begin{proposition}[Purity spread at fixed condition number, $d=3$]\label{prop:purity-spread}
Along the family $\lambda_1=t$, $\lambda_2(t)=1-t(1+1/\kappa)$, $\lambda_3=t/\kappa$, $t\in\big[\tfrac{\kappa}{2\kappa+1},\tfrac{\kappa}{\kappa+2}\big]$ of Section \ref{sec:nonbij}, the purity $P(t):=\lambda_1^2+\lambda_2^2+\lambda_3^2$ is a strictly convex quadratic function of $t$. Consequently:
\begin{enumerate}[label=(\alph*)]
\item The minimum is attained at the unique interior stationary point $t_*=\dfrac{\kappa(\kappa+1)}{2(\kappa^2+\kappa+1)}$, which lies strictly inside the interval for every $\kappa>1$, and equals
\[
P_{\min}(\kappa) = \frac{\kappa^2+1}{2(\kappa^2+\kappa+1)}.
\]
\item The maximum is attained at one of the two boundary spectra,
\[
P_P(\kappa) = \frac{2\kappa^2+1}{(2\kappa+1)^2} \quad(\lambda_1=\lambda_2), \qquad
P_Q(\kappa) = \frac{\kappa^2+2}{(\kappa+2)^2} \quad(\lambda_2=\lambda_3),
\]
and $P_Q(\kappa) > P_P(\kappa)$ for every $\kappa>1$, so $P_{\max}(\kappa)=P_Q(\kappa)$.
\end{enumerate}
Moreover $1/3 < P_{\min}(\kappa) < 1/2$ and $1/3 < P_{\max}(\kappa) < 1$ for every $\kappa>1$; at $\kappa=1$ the interval degenerates to $t=1/3$ and $P_{\min}=P_{\max}=1/3$, the purity of the maximally mixed state $I/3$.
\end{proposition}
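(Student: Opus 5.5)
The plan is a direct computation with a single variable. Since this is a quadratic problem, every step reduces to explicit polynomial identities in $\kappa$. I would write $c:=1+1/\kappa=(\kappa+1)/\kappa$, so that $\lambda_2(t)=1-ct$ and
\[
P(t)=t^2+(1-ct)^2+\frac{t^2}{\kappa^2}=a\,t^2-2c\,t+1,\qquad a:=1+c^2+\frac1{\kappa^2}=\frac{2(\kappa^2+\kappa+1)}{\kappa^2}.
\]
Because $a>0$, strict convexity is immediate. The unique stationary point is $t_*=c/a=\kappa(\kappa+1)/\bigl(2(\kappa^2+\kappa+1)\bigr)$, and the vertex formula gives
\[
P(t_*)=1-\frac{c^2}{a}=1-\frac{(\kappa+1)^2}{2(\kappa^2+\kappa+1)}=\frac{\kappa^2+1}{2(\kappa^2+\kappa+1)}.
\]
This is the claimed $P_{\min}$, provided $t_*$ is interior to the interval.

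Interiority is the one step requiring care, and I would verify it by cross-multiplying, since all denominators are positive. The lower inequality $t_*>\kappa/(2\kappa+1)$ is equivalent to $(\kappa+1)(2\kappa+1)>2(\kappa^2+\kappa+1)$, which reduces to $\kappa>1$. The upper inequality $t_*<\kappa/(\kappa+2)$ is equivalent to $(\kappa+1)(\kappa+2)<2(\kappa^2+\kappa+1)$, which reduces to $\kappa(\kappa-1)>0$. Both hold strictly for $\kappa>1$. Strict convexity then forces the maximum to occur at an endpoint.

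At the endpoints I would substitute directly. At $t=\kappa/(2\kappa+1)$ the spectrum is $(\kappa,\kappa,1)/(2\kappa+1)$, giving $P_P$. At $t=\kappa/(\kappa+2)$ the spectrum is $(\kappa,1,1)/(\kappa+2)$, giving $P_Q$. To compare them I would cross-multiply and expand:
\[
(\kappa^2+2)(2\kappa+1)^2-(2\kappa^2+1)(\kappa+2)^2=2\kappa^4-4\kappa^3+4\kappa-2=2(\kappa-1)^3(\kappa+1).
\]
This is strictly positive for $\kappa>1$, so $P_{\max}=P_Q$. The expansion and factorisation form the main, though routine, obstacle, and I would double-check the factorisation by re-multiplying.

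For the final bounds, $P_{\min}<1/2$ because $\kappa^2+1<\kappa^2+\kappa+1$. Next, $P_{\min}>1/3$ is equivalent to $3\kappa^2+3>2\kappa^2+2\kappa+2$, i.e.\ $(\kappa-1)^2>0$. Then $P_{\max}\ge P_{\min}>1/3$, and $P_{\max}<1$ since $\kappa^2+2<(\kappa+2)^2$. At $\kappa=1$ both endpoints equal $1/3$, so the interval collapses to $t=1/3$ and $P=3\cdot\tfrac19=1/3$.
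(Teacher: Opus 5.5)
Your proposal is correct and follows the paper's proof essentially step for step. The paper uses the same vertex form $At^2-2at+1$, the same cross-multiplication check for interiority of $t_*$, the same endpoint substitutions, and the same factorisation $2(\kappa-1)^3(\kappa+1)$. The only cosmetic difference is that you obtain $P_{\max}>1/3$ from $P_{\max}\ge P_{\min}>1/3$, whereas the paper checks $3(\kappa^2+2)>(\kappa+2)^2$ directly.
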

\begin{proof}
Write $a:=1+1/\kappa$, so $P(t)=t^2+(1-at)^2+(t/\kappa)^2=At^2-2at+1$ with $A:=1+a^2+1/\kappa^2=2(\kappa^2+\kappa+1)/\kappa^2>0$; this is a strictly convex (upward) parabola, so it attains its minimum at the vertex $t_*=a/A=\kappa(\kappa+1)/(2(\kappa^2+\kappa+1))$ and its maximum at one of the two endpoints of any closed subinterval.

\emph{Interior location of $t_*$.} We check $t_*>\kappa/(2\kappa+1)$: cross-multiplying by the positive denominators and dividing by $\kappa>0$, this is equivalent to $(\kappa+1)(2\kappa+1)>2(\kappa^2+\kappa+1)$, i.e.\ $2\kappa^2+3\kappa+1>2\kappa^2+2\kappa+2$, i.e.\ $\kappa>1$. Likewise $t_*<\kappa/(\kappa+2)$ is equivalent, after the same manipulation, to $(\kappa+1)(\kappa+2)<2(\kappa^2+\kappa+1)$, i.e.\ $\kappa^2+3\kappa+2<2\kappa^2+2\kappa+2$, i.e.\ $\kappa(\kappa-1)>0$, again equivalent to $\kappa>1$. So $t_*$ lies strictly inside the interval exactly when $\kappa>1$.

\emph{Minimum value.} For an upward parabola $At^2-2at+1$, the vertex value is $1-a^2/A$. Since $a^2/A=(\kappa+1)^2/(2(\kappa^2+\kappa+1))$,
\[
P_{\min}(\kappa) = 1-\frac{(\kappa+1)^2}{2(\kappa^2+\kappa+1)} = \frac{2(\kappa^2+\kappa+1)-(\kappa+1)^2}{2(\kappa^2+\kappa+1)} = \frac{\kappa^2+1}{2(\kappa^2+\kappa+1)}.
\]

\emph{Boundary values.} At $t=\kappa/(2\kappa+1)$ ($\lambda_1=\lambda_2=t$, $\lambda_3=t/\kappa$), $P=2t^2+t^2/\kappa^2=t^2(2+1/\kappa^2)$, which simplifies to $P_P(\kappa)=(2\kappa^2+1)/(2\kappa+1)^2$. At $t=\kappa/(\kappa+2)$ ($\lambda_1=t$, $\lambda_2=\lambda_3=t/\kappa$), $P=t^2+2t^2/\kappa^2=t^2(1+2/\kappa^2)$, which simplifies to $P_Q(\kappa)=(\kappa^2+2)/(\kappa+2)^2$.

\emph{$P_Q>P_P$ for $\kappa>1$.} Clearing denominators,
\[
P_Q(\kappa)-P_P(\kappa) = \frac{(\kappa^2+2)(2\kappa+1)^2-(2\kappa^2+1)(\kappa+2)^2}{(\kappa+2)^2(2\kappa+1)^2}.
\]
The numerator expands to $2\kappa^4-4\kappa^3+4\kappa-2=2(\kappa^4-2\kappa^3+2\kappa-1)$, and $\kappa^4-2\kappa^3+2\kappa-1$ factors as $(\kappa-1)(\kappa^3-\kappa^2-\kappa+1)=(\kappa-1)(\kappa-1)^2(\kappa+1)=(\kappa-1)^3(\kappa+1)$. Hence
\[
P_Q(\kappa)-P_P(\kappa) = \frac{2(\kappa-1)^3(\kappa+1)}{(\kappa+2)^2(2\kappa+1)^2},
\]
which is strictly positive for $\kappa>1$ (and zero only at $\kappa=1$), proving $P_{\max}(\kappa)=P_Q(\kappa)$.

\emph{Bounds.} $P_Q(\kappa)<1$ iff $\kappa^2+2<(\kappa+2)^2$, i.e.\ $\kappa>-1/2$, always true; $P_Q(\kappa)>1/3$ iff $3(\kappa^2+2)>(\kappa+2)^2$, i.e.\ $2(\kappa-1)^2>0$, true for $\kappa\neq1$. Similarly $P_{\min}(\kappa)<1/2$ iff $\kappa^2+1<\kappa^2+\kappa+1$, always true for $\kappa>0$, and $P_{\min}(\kappa)>1/3$ iff $3(\kappa^2+1)>2(\kappa^2+\kappa+1)$, i.e.\ $(\kappa-1)^2>0$, true for $\kappa\neq1$.
\end{proof}

\begin{remark}
Proposition \ref{prop:purity-spread} mirrors Proposition \ref{prop:spread} exactly, but with every qualitative feature reversed: purity is convex rather than concave along the family, so the extremum at the interior stationary point is a minimum rather than a maximum, and the two boundary spectra swap roles ($P_{\max}=P_Q$ here, versus $S_{\min}=S_Q$ there). No transcendental equation of the form $\lambda_2/\lambda_1=\kappa^{-1/(\kappa+1)}$ appears; every quantity above is a rational function of $\kappa$, and the inequality $P_Q>P_P$ is established by exact polynomial factorisation rather than by a calculus argument, which is possible precisely because purity is quadratic where entropy is logarithmic.
\end{remark}

\subsection{Fourier-Theoretic Explanation of Non-Bijectivity}

The existence of the entropy spread (Proposition \ref{prop:spread}) and purity spread (Proposition \ref{prop:purity-spread}) for $d=3$ — and their absence for $d=2$ (Proposition \ref{prop:qubit}) — has a natural explanation in terms of log-spectral Fourier transforms. This observation, while not requiring new proofs, provides the structural reason why the bijectivity breaks at $d=3$.

For any invertible positive operator $A\in B(H)_{++}$, define the log-spectral measure $\nu_A$ as the normalized push-forward of the spectral measure of $\log A$ to the log-space coordinate $\lambda\mapsto\ln\lambda$. That is, if $A=\sum_i\lambda_i P_i$ with $\lambda_i>0$ and orthogonal projections $P_i$, then
\[
\nu_A := \frac1d\sum_i\delta(\cdot-\ln\lambda_i),
\]
where $d=\dim H$ (or formally, the measure is $(1/\operatorname{Tr}I)\sum_i\operatorname{Tr}(P_i)\delta(\cdot-\ln\lambda_i)$ in infinite dimensions). The Fourier transform of this measure is
\[
\Phi_A(t) := \int_{-\infty}^{\infty} e^{it\lambda}d\nu_A(\lambda) = \frac1d\sum_i e^{it\ln\lambda_i} = \frac1d\operatorname{Tr}(e^{it\log A}).
\]

\begin{proposition}[OLC as spectral diameter; Fourier characterization for qubits]\label{prop:fourier-char}
For any $A\in B(H)_{++}$,
\[
\QLC(A) = \tfrac12\operatorname{diam}(\sigma_{\log}(A)),
\]
where $\sigma_{\log}(A):=\{\ln\lambda:\lambda\in\sigma(A)\}$ is the log-spectrum. For $d=2$ (qubits), the log-spectrum is symmetric about zero with support $[-q,+q]$ where $q=\QLC(A)$, hence $\nu_A=\tfrac12[\delta(-q)+\delta(+q)]$ and
\[
\Phi_A(t) = \tfrac12[e^{itq}+e^{-itq}] = \cos(tq).
\]
Thus the Fourier transform $\Phi_A$ is completely determined by the single parameter $q=\QLC(A)$: different qubits with the same OLC have identical Fourier transforms of their log-spectra.
\end{proposition}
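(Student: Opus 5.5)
The first identity comes from the spectral mapping theorem for the continuous functional calculus. For $A\in B(H)_{++}$ we have $\sigma(A)\subset[m,M]$ with $m=\inf\sigma(A)>0$, $M=\sup\sigma(A)$, and both endpoints belong to $\sigma(A)$ because the spectrum is compact. Since $\ln$ is continuous and strictly increasing on $(0,\infty)$, $\sigma(\log A)=\ln\sigma(A)=\sigma_{\log}(A)$ is a compact subset of $[\ln m,\ln M]$ that contains both endpoints. Hence
\[
\operatorname{diam}\sigma_{\log}(A)=\ln M-\ln m=\ln\kappa(A),
\]
and halving gives $\QLC(A)$ by Definition \ref{def:qmc}. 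This is just the computation in the proof of Theorem \ref{thm:thompson} read in log-coordinates.

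For the qubit part, I first fix a normalisation, because the statement as written only makes sense after one. The log-spectrum of a general positive $2\times2$ matrix is $\{\ln\lambda_1,\ln\lambda_2\}$, which is not symmetric about $0$; for example $A=\operatorname{diag}(2,1)$ gives $\{\ln 2,0\}$. The proof will therefore use the positive scale invariance $\QLC(\lambda A)=\QLC(A)$ and pass to the representative $\tilde A=\lambda_*A$ of the ray $[A]$ with $\lambda_*=(\lambda_1\lambda_2)^{-1/2}=(\det A)^{-1/2}$. This is exactly the minimiser $\lambda_*=(mM)^{-1/2}$ from Theorem \ref{thm:thompson}, so the normalisation is canonical rather than ad hoc: it picks the point on $[A]$ closest to $I$ in $d_T$.

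With $\det\tilde A=1$ the eigenvalues are $e^{\pm q}$, where $q=\tfrac12\ln(\lambda_1/\lambda_2)=\QLC(A)$. So $\sigma_{\log}(\tilde A)=\{-q,q\}$, which is symmetric with support $[-q,q]$. Each eigenvalue carries weight $1/d=1/2$. This covers the degenerate case $q=0$ as well, where the two atoms coalesce into $\delta_0$, consistent with the formula. Thus $\nu_{\tilde A}=\tfrac12(\delta_{-q}+\delta_q)$, and direct integration gives
\[
\Phi_{\tilde A}(t)=\tfrac12\operatorname{Tr}\big(e^{it\log\tilde A}\big)=\tfrac12\big(e^{itq}+e^{-itq}\big)=\cos(tq).
\]

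The final sentence then follows: two qubits with the same OLC have, after this normalisation, the same log-spectral measure and hence the same $\Phi$. Without normalisation, they agree up to the phase factor $e^{it\ln(\det A)^{1/2}}$ coming from the translation $\ln\lambda\mapsto\ln\lambda+\ln\lambda_*$, since translating a measure multiplies its Fourier transform by a unimodular character.

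The only real obstacle is this normalisation issue. I will state it explicitly as the reading under which the proposition holds, justify it through the scale invariance of OLC, and note that $|\Phi_A|$ is invariant under the translation in any case. The spectral-mapping step needs no finite-dimensionality. The Fourier formula uses $d=2$ only to guarantee exactly two atoms of equal weight, which is precisely what fails for $d\ge3$, where the middle eigenvalue is free (Section \ref{sec:nonbij}).
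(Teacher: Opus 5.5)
Your proof is correct, and for the qubit part it is more accurate than the paper's own. The first identity is handled the same way in both: the paper reads it off from $\kappa(A)=\lambda_{\max}/\lambda_{\min}$. Your compactness remark is harmless but not needed, since $\operatorname{diam}S=\sup S-\inf S$ for any bounded $S\subset\mathbb{R}$.

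For $d=2$ the two routes differ.

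\begin{itemize}
\item The paper restricts to density matrices with eigenvalues $(1\pm r)/2$. It then shifts log-coordinates by $\ln(1/2)$, i.e.\ rescales by $2$.
\item You allow any positive $2\times2$ matrix and rescale by $(\det A)^{-1/2}$, the Thompson minimiser $\lambda_*$ of Theorem \ref{thm:thompson}.
\end{itemize}

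Your normalisation is the one that actually works. With $r=\tanh q$, the density-matrix eigenvalues are $e^{\pm q}/(2\cosh q)$, so the log-spectrum is centred at $-\ln(2\cosh q)$, not at $-\ln 2$. The paper's rescaled support $\{\ln(1-r),\ln(1+r)\}$ is therefore not $\{-q,q\}$ unless $r=0$. For example, $r=\tfrac12$ gives $\ln\tfrac32\approx0.405$, whereas $q=\tfrac12\ln3\approx0.549$.

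Your version also buys two further things. It is more general, since it needs no trace condition. It also gives the correct form of the unnormalised transform, $\Phi_A(t)=e^{\frac{it}{2}\ln\det A}\cos(tq)$. For density qubits this reads $e^{-it\ln(2\cosh q)}\cos(tq)$. That is still a function of $q$ alone, so the proposition's final sentence holds in that class even without normalisation. The formula $\Phi_A=\cos(tq)$ itself, however, needs your geometric-mean rescaling.
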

\begin{proof}
$\QLC(A)=\tfrac12\ln(\lambda_{\max}/\lambda_{\min})=\tfrac12[\ln\lambda_{\max}-\ln\lambda_{\min}]=\tfrac12\operatorname{diam}(\sigma_{\log}(A))$ by definition. For $d=2$, the eigenvalues are $(1+r)/2$ and $(1-r)/2$ where $r=\QMC(A)$, so $\ln\lambda_i=\ln\frac{1\pm r}2$, and $\tfrac12[\ln\frac{1+r}{2}-\ln\frac{1-r}{2}]=\tfrac12\ln\frac{1+r}{1-r}=\operatorname{arctanh}(r)=q=\QLC(A)$. The log-spectrum support is thus $[\ln\frac{1-r}{2},\ln\frac{1+r}{2}]$, which is symmetric about its midpoint $\ln(1/2)$ iff we work in shifted coordinates $\lambda'=\lambda/(1/2)$, i.e. $\ln\lambda'=\ln\lambda-\ln(1/2)$, giving support $[-q,+q]$. The Fourier formula follows immediately.
\end{proof}

\begin{corollary}[Why $d\ge3$ admits multiple spectra per OLC]\label{cor:fourier-nonbij}
For $d\ge3$, multiple spectra with the same condition number $\kappa$ (equivalently, the same OLC $q$) can yield \emph{different} log-spectral measures $\nu_A$, hence different Fourier transforms $\Phi_A(t)$. This is the fundamental obstruction to a bijection between OLC and entropy (or purity) for $d\ge3$:
\begin{itemize}
\item[\textit{d=2}:] Every density matrix with $\kappa(\rho)$ fixed is determined up to unitary by the pair $(\lambda_{\max},\lambda_{\min})$, which determines both $q=\QLC(\rho)$ and $S(\rho)$ uniquely. The Fourier measure $\Phi_\rho(t)=\cos(tq)$ is OLC-determined, and $S$ is a function of $\Phi$ (hence of OLC alone).
\item[\textit{d=3}:] For fixed $\kappa$, there is a one-parameter family of eigenvalue triples $(\lambda_1,\lambda_2,\lambda_3)$ satisfying the trace and ordering constraints (Section \ref{sec:nonbij}). These triples have the same $\kappa$ (hence the same $q$), but their log-spectra have different geometries. For instance, $(\lambda_1,\lambda_1,\lambda_3)$ with $\lambda_3=\lambda_1/\kappa$ gives $\nu_{\text{P}}$ supported on $\{\ln\lambda_1, \ln\lambda_1/\kappa\}$ (a pair of masses), while $(\lambda_1,\lambda_3,\lambda_3)$ with $\lambda_3=\lambda_1/\kappa$ gives $\nu_{\text{Q}}$ supported on $\{\ln\lambda_1, \ln\lambda_1/\kappa, \ln\lambda_1/\kappa\}$ (two masses at different points). These two measures have different Fourier transforms, hence define different density matrices $\rho_{\text{P}},\rho_{\text{Q}}$ with $\kappa(\rho_{\text{P}})=\kappa(\rho_{\text{Q}})$ but $S(\rho_{\text{P}})\neq S(\rho_{\text{Q}})$. The spread $[S_{\min}(\kappa),S_{\max}(\kappa)]$ (Proposition \ref{prop:spread}) is the set of all entropies achievable by spectra with condition number $\kappa$, and it is wider than a single point because the Fourier measure is not OLC-determined.
\end{itemize}
\end{corollary}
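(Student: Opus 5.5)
The plan is to prove the two bullet points separately. Each reduces to results already in hand: Proposition \ref{prop:qubit} and Proposition \ref{prop:fourier-char} for $d=2$, and Proposition \ref{prop:spread} with Lemma \ref{lem:SPSQ} for $d=3$.

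\emph{Case $d=2$.} Let $\rho$ be an invertible qubit density matrix with eigenvalues $p\ge 1-p$. Then $\kappa=p/(1-p)$ inverts to $p=\kappa/(\kappa+1)$, so $\kappa$ fixes the unordered spectrum. Since $\rho$ is self-adjoint, the spectrum fixes $\rho$ up to unitary conjugation. Both $q=\QLC(\rho)=\tfrac12\ln\kappa$ and $S(\rho)$ are unitarily invariant, so each is a function of $\kappa$ alone; the explicit map is (16). For the Fourier statement I would quote Proposition \ref{prop:fourier-char}. In the shifted log-coordinates used there, $\Phi_\rho(t)=\cos(tq)$, and $q$ can be read off from $\Phi_\rho$, for instance as $q=\sqrt{-\Phi_\rho''(0)}$. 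Composing with (16) then exhibits $S$ as a function of $\Phi_\rho$. One point needs care: the shift by $\ln(1/2)$ is legitimate only because of the trace normalisation, and I would say so explicitly.

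\emph{Case $d=3$.} Fix $\kappa>1$ and take the two boundary spectra of Section \ref{sec:nonbij}. The spectrum $\rho_{\mathrm P}$ is $(t_P,t_P,t_P/\kappa)$ with $t_P=\kappa/(2\kappa+1)$. The spectrum $\rho_{\mathrm Q}$ is $(t_Q,t_Q/\kappa,t_Q/\kappa)$ with $t_Q=\kappa/(\kappa+2)$. Both have condition number $\kappa$. Their log-spectral measures are
\[
\nu_{\mathrm P}=\tfrac23\delta_{\ln t_P}+\tfrac13\delta_{\ln t_P-\ln\kappa},\qquad \nu_{\mathrm Q}=\tfrac13\delta_{\ln t_Q}+\tfrac23\delta_{\ln t_Q-\ln\kappa}.
\]
This tidies the slightly loose description of the supports in the statement: the two measures differ in their weights, and in their atom locations since $t_P\neq t_Q$. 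To show the measures are distinct I would compare first moments, $-i\Phi'(0)=\int\lambda\,d\nu$. These differ unless $\tfrac23\ln t_P-\tfrac13\ln\kappa=\ln t_Q-\tfrac23\ln\kappa$. Failing that shortcut, I would invoke injectivity of the Fourier--Stieltjes transform on finite measures. Once the spectra differ, $\rho_{\mathrm P}$ and $\rho_{\mathrm Q}$ are not unitarily equivalent. Lemma \ref{lem:SPSQ} then gives $S(\rho_{\mathrm Q})=S_Q(\kappa)<S_P(\kappa)=S(\rho_{\mathrm P})$.

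\emph{Range of $S$ and the main obstacle.} For the claim that $[S_{\min}(\kappa),S_{\max}(\kappa)]$ is exactly the set of achievable entropies, I would argue as follows. Every trace-one spectrum with condition number $\kappa$ lies, after ordering, on the affine segment parametrised by $t$. The function $S(t)$ is continuous on this closed interval, so by the intermediate value theorem its image is the closed interval between its minimum and maximum. These extrema are identified in Proposition \ref{prop:spread}. I expect the only delicate step to be the first-moment comparison. If the one-line numeric check is unconvincing, I will fall back on the injectivity argument, which needs no computation. Alternatively, it is enough to observe that the entropies differ, which already forces the measures to differ.
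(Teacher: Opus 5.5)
Your route is the paper's own: Propositions \ref{prop:qubit} and \ref{prop:fourier-char} for $d=2$, and Section \ref{sec:nonbij}, Proposition \ref{prop:spread} and Lemma \ref{lem:SPSQ} for $d=3$. Your intermediate-value argument that the achievable entropies are exactly $[S_{\min}(\kappa),S_{\max}(\kappa)]$ is correct, and it fills in something the paper only asserts.

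The genuine problem is the $d=2$ Fourier step. You rightly flag the recentring as delicate, but your resolution is false. Trace normalisation fixes the \emph{arithmetic} mean of the eigenvalues at $\tfrac12$. A two-point log-spectrum, however, is symmetric about the logarithm of the \emph{geometric} mean $\sqrt{\lambda_{\max}\lambda_{\min}}$. Subtracting $\ln\tfrac12$ from $\ln\tfrac{1\pm r}{2}$ leaves atoms at $\ln(1+r)$ and $\ln(1-r)$, which are not $\pm q$. For $r=0.6$ ($\kappa=4$, $q=\ln2\approx0.693$) they are $\ln1.6\approx0.470$ and $\ln0.4\approx-0.916$. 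Since $\operatorname{Tr}\rho=1$ and $\lambda_1/\lambda_2=e^{2q}$ force $\lambda_{1,2}=e^{\pm q}/(2\cosh q)$, the transform actually defined in the paper is
\[ \Phi_\rho(t)=\tfrac12\operatorname{Tr}\big(e^{it\log\rho}\big)=e^{-it\ln(2\cosh q)}\cos(tq). \]
So $\cos(tq)$ appears only after recentring at the state-dependent point $-\ln(2\cosh q)$. Likewise, $q=\sqrt{-\Phi''(0)}$ holds only for that recentred transform; for $\Phi_\rho$ itself you need the variance, $q^2=\Phi_\rho'(0)^2-\Phi_\rho''(0)$. The proof of Proposition \ref{prop:fourier-char} makes the same slip, so quoting it does not close the step.

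What survives, and what you should argue instead, is this. Trace normalisation makes the centre $-\ln(2\cosh q)$ a function of $q$, so $\Phi_\rho$ is still OLC-determined. Moreover, $S$ is a function of $\Phi_\rho$ without extracting $q$ at all: $\Phi_\rho$ determines $\nu_\rho$ by Fourier--Stieltjes injectivity, hence the spectrum, hence $S$.

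The $d=3$ half is sound, with two smaller slips.
\begin{enumerate}
\item The first moment of $\nu_{\mathrm P}$ is $\ln t_P-\tfrac13\ln\kappa=\tfrac13\ln\det\rho_{\mathrm P}$, not $\tfrac23\ln t_P-\tfrac13\ln\kappa$. With that correction the comparison amounts to $\kappa(\kappa+2)^3\neq(2\kappa+1)^3$. This is true for every $\kappa>1$, but it needs a proof, not a one-line numeric check: the log-ratio $g(\kappa)=\ln\kappa+3\ln(\kappa+2)-3\ln(2\kappa+1)$ has $g(1)=0$ and $g'(\kappa)=2(\kappa-1)^2/\big(\kappa(\kappa+2)(2\kappa+1)\big)>0$.
\item Injectivity of the Fourier--Stieltjes transform cannot by itself show $\nu_{\mathrm P}\neq\nu_{\mathrm Q}$; it only converts that fact into $\Phi_{\mathrm P}\neq\Phi_{\mathrm Q}$. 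Distinctness of the measures is immediate anyway. For $\kappa>1$, $\nu_{\mathrm P}$ puts mass $\tfrac23$ on the top point of its support, while $\nu_{\mathrm Q}$ puts mass $\tfrac13$ there. Alternatively, as you say, their entropies differ by Lemma \ref{lem:SPSQ}.
\end{enumerate}
If you first show the measures differ and then apply injectivity, no moment computation is needed.
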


This Fourier perspective makes transparent why Propositions \ref{prop:spread} and \ref{prop:purity-spread} are not curiosities but structural consequences: the non-bijectivity of OLC and entropy/purity for $d\ge3$ is because the log-spectral measure — and hence its Fourier transform — contains information beyond OLC. For $d=2$, that information is trivial (the measure is determined by its diameter). For $d\ge3$, it is not.

\begin{figure}[htbp]
\centering
\includegraphics[width=0.95\linewidth]{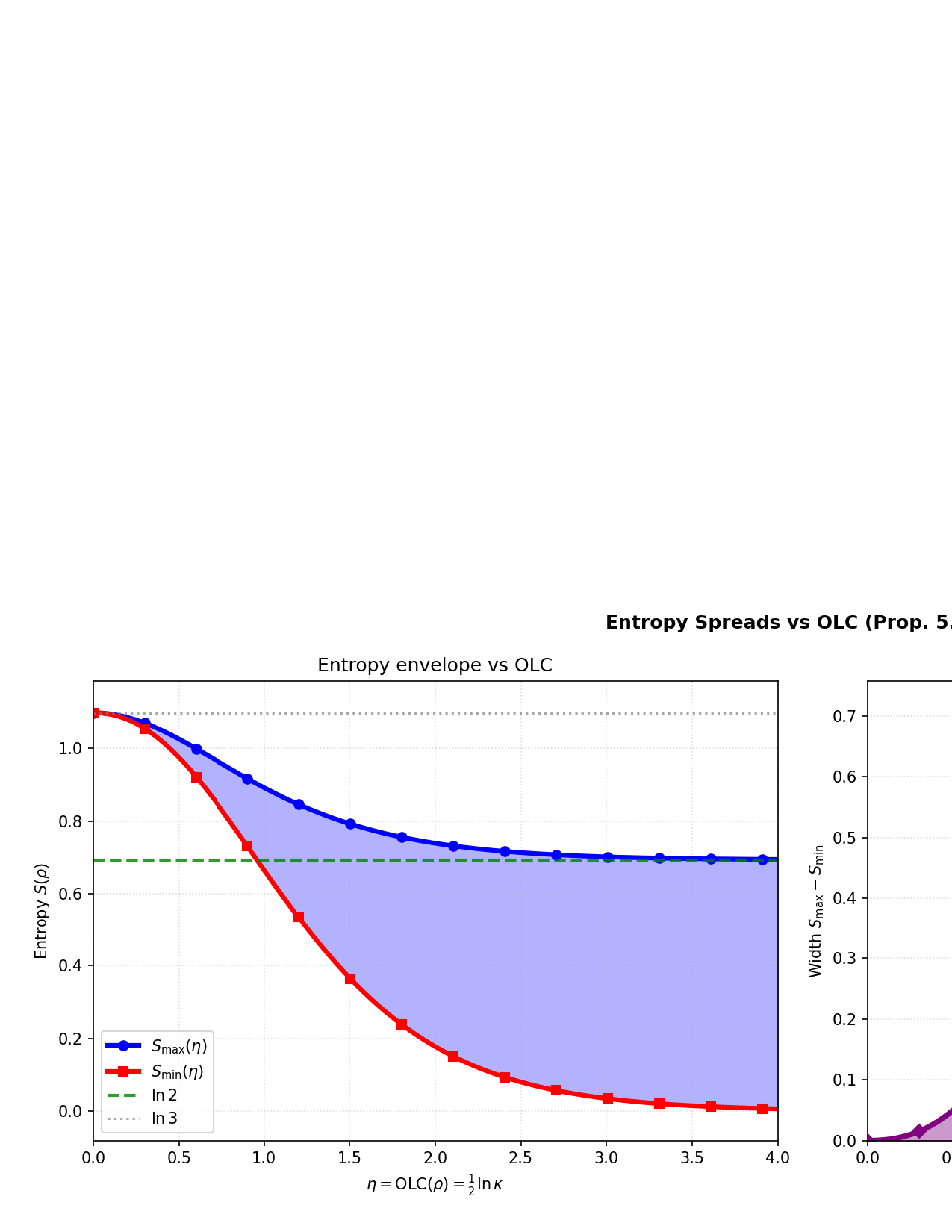}
\caption{\textbf{Entropy envelope and spread width, vs OLC (Prop. 5.12).} Left: admissible range $[S_{\min}(\eta), S_{\max}(\eta)]$ for three-level systems, with bounds $\ln 2 \le S_{\max} < \ln 3$, plotted directly against $\eta=\mathrm{OLC}(\rho)=\tfrac12\ln\kappa$ rather than $\kappa$ itself. Right: width of the entropy spread $S_{\max}-S_{\min}$ as a function of $\eta$, showing growing ambiguity — for large $\eta$ (high purity), there is more structural freedom in how that purity is distributed among the three levels.}
\label{fig:entropy_envelope}
\end{figure}

\begin{figure}[htbp]
\centering
\includegraphics[width=0.95\linewidth]{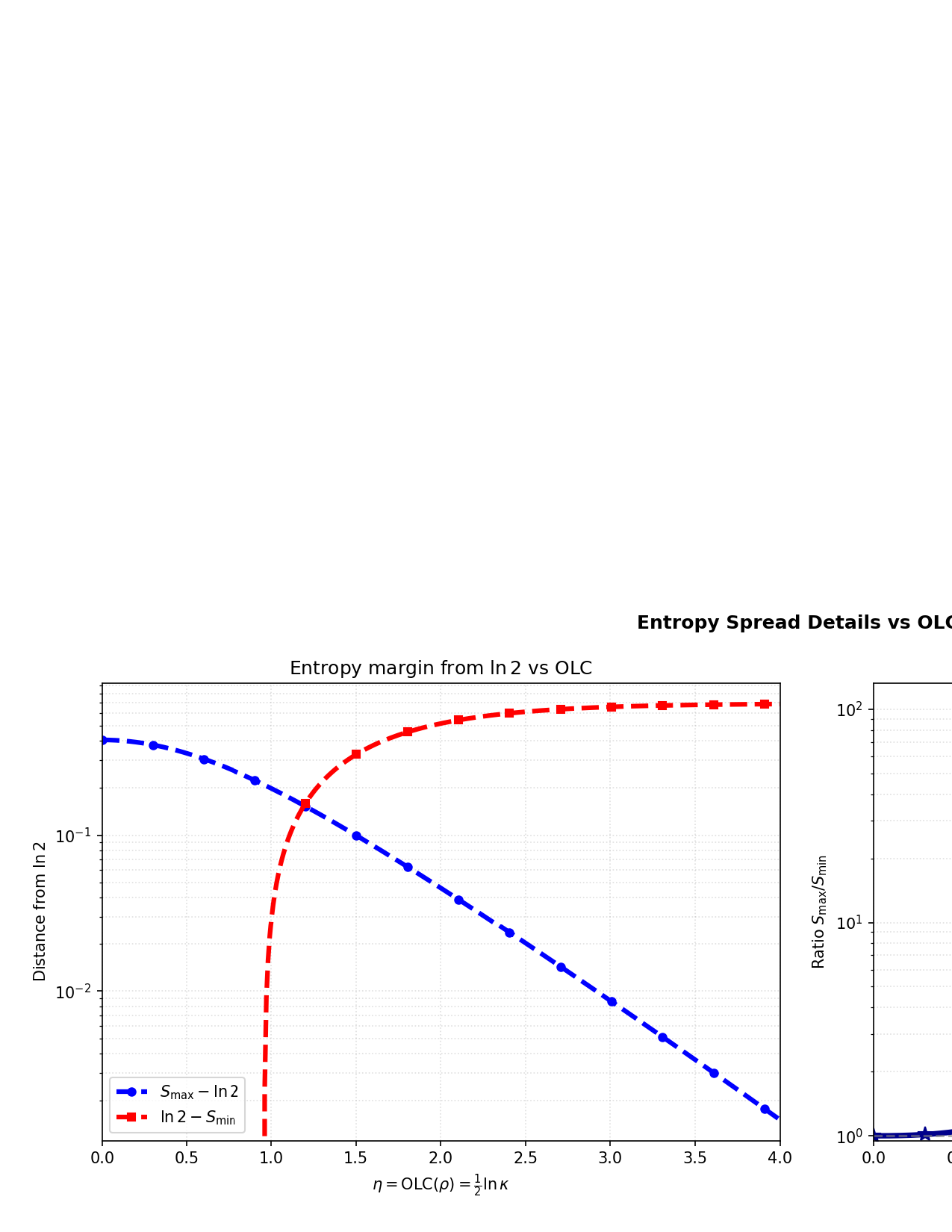}
\caption{\textbf{Entropy margin and ratio analysis, vs OLC (Prop. 5.12).} Left: margin of the envelope from the fixed value $\ln 2$, showing how far $S_{\max}$ lies above and $S_{\min}$ lies below this reference, against $\eta=\mathrm{OLC}(\rho)$. Both decay from small $\eta$ but have different rates. Right: ratio $S_{\max}/S_{\min}$ illustrating the multiplicative separation of upper and lower bounds, revealing the asymmetry in how entropy constraints operate in dimension 3.}
\label{fig:entropy_margins}
\end{figure}

\begin{figure}[htbp]
\centering
\includegraphics[width=0.95\linewidth]{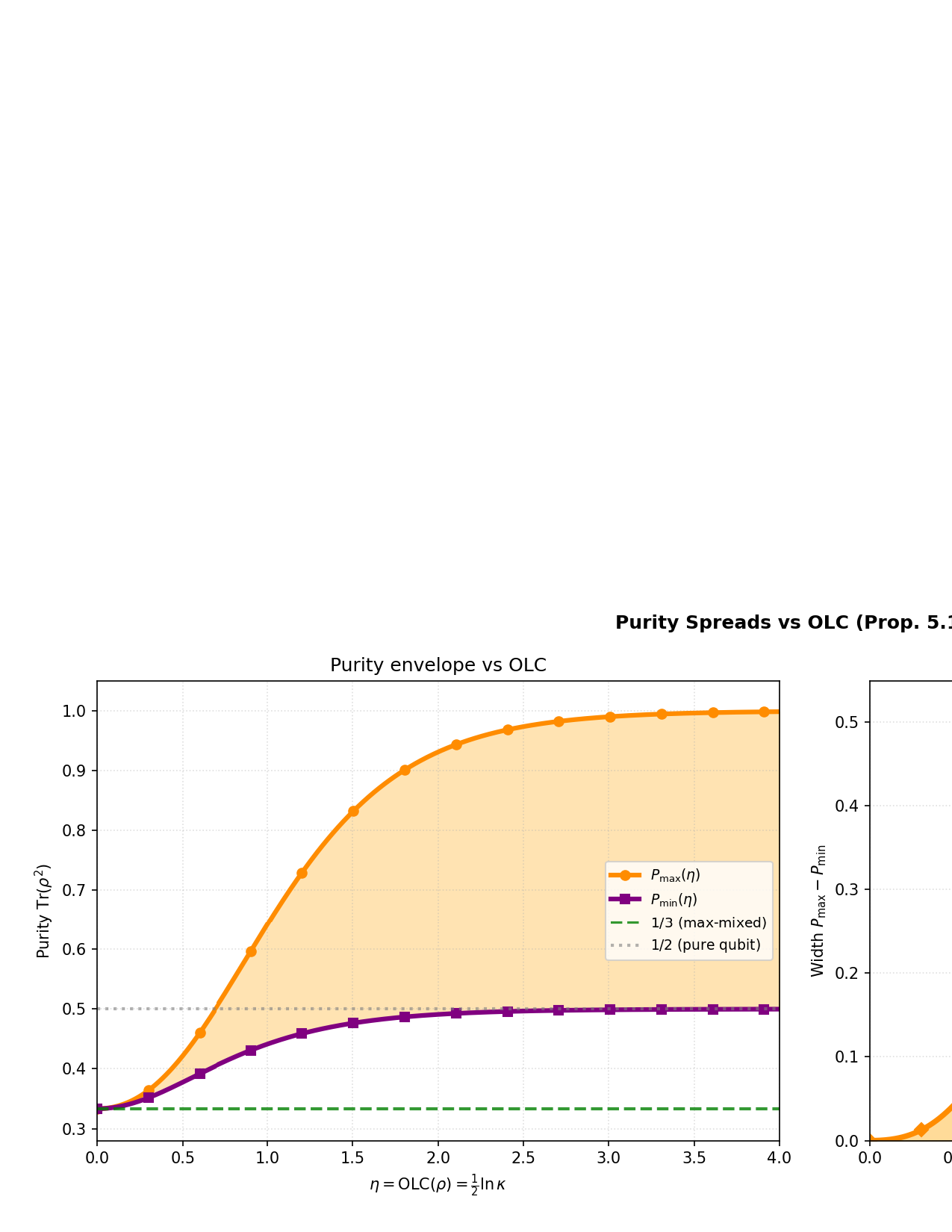}
\caption{\textbf{Purity envelope and spread width, vs OLC (Prop. 5.16).} Left: admissible range $[P_{\min}(\eta), P_{\max}(\eta)]$ for three-level systems with convex structure, plotted against $\eta=\mathrm{OLC}(\rho)$. Right: width of the purity spread $P_{\max}-P_{\min}$, which is monotone decreasing toward pure states. The opposite concavity structure compared to entropy (Fig. 7a) reflects the dual nature of purity versus entropy as state measures.}
\label{fig:purity_envelope}
\end{figure}

\begin{figure}[htbp]
\centering
\includegraphics[width=0.95\linewidth]{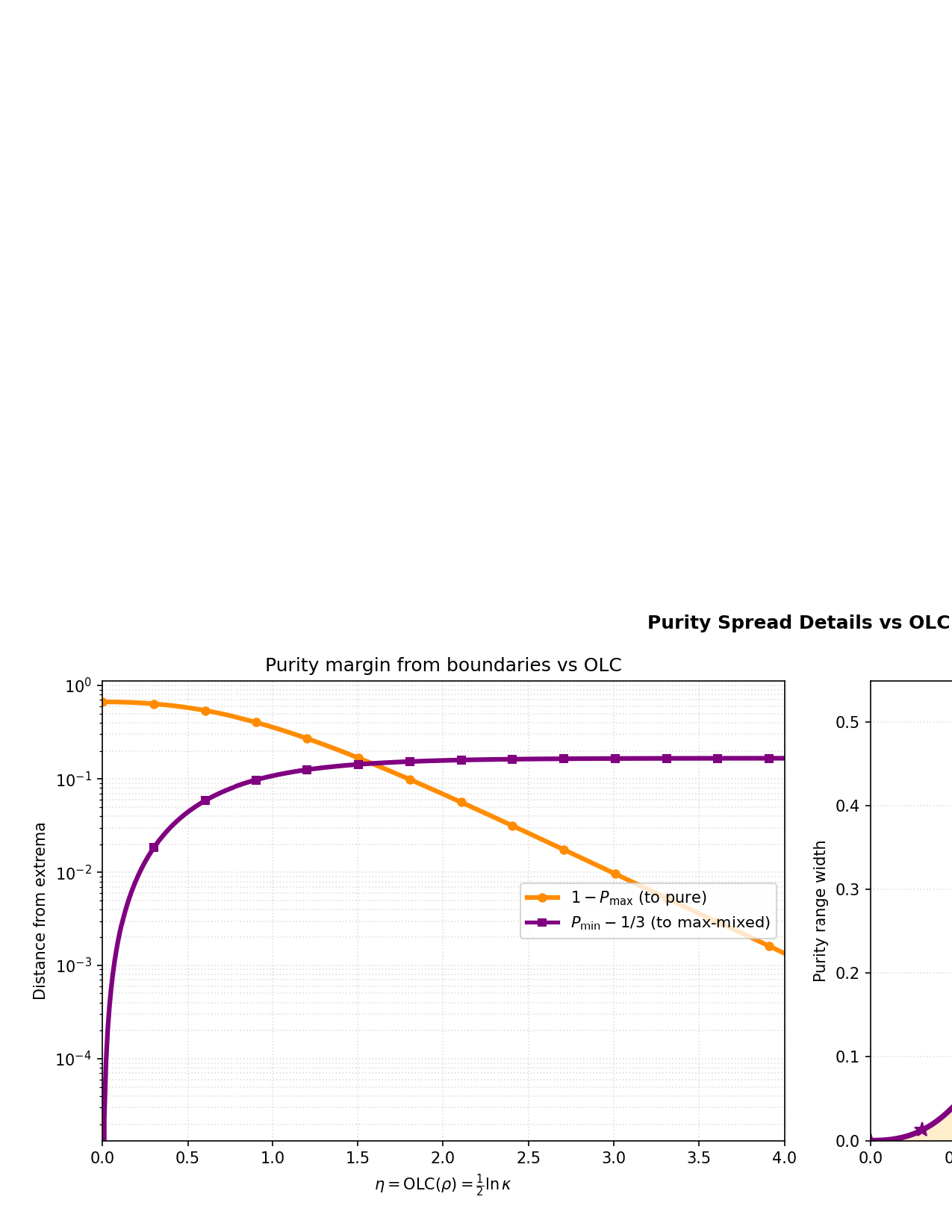}
\caption{\textbf{Purity margin and range evolution, vs OLC (Prop. 5.16).} Left: distance of the purity envelope from its natural boundaries ($P=1$ for pure states, $P=1/3$ for maximally mixed), both decaying exponentially with different rates as $\eta=\mathrm{OLC}(\rho)$ increases. Right: purity range width evolution showing the shrinking admissible region as $\eta$ grows. \emph{Key insight:} Entropy and purity spreads coexist precisely because multiple spectral configurations with identical $\kappa$ (and thus identical OLC) yield different entropy and purity values — the root of structural non-bijectivity for $d \ge 3$.}
\label{fig:purity_margins}
\end{figure}

\subsubsection{Why $d\ge3$ admits multiple spectra per OLC}

For $d\ge3$, multiple spectra with the same condition number $\kappa$ (equivalently, the same OLC $q$) can yield different density matrices with different entropies and purities. This is the fundamental obstruction to a bijection between OLC and entropy (or purity) for $d\ge3$. The explanation lies in the log-spectral Fourier transform $\Phi_A(t)=\tfrac1d\operatorname{Tr}(e^{it\log A})$ (Section 5.5):

For $d=2$ (qubits), every density matrix with condition number $\kappa$ is determined up to unitary by the pair $(r, \text{arbitrary phase})$, because the log-spectrum is forced to be symmetric $[-q, +q]$ with $q=\QLC(\rho)$. Hence $\Phi_A(t)=\cos(tq)$ is entirely OLC-determined, and entropy is a function of OLC alone (Proposition 5.4).

For $d\ge3$ (qutrits and higher), the spectrum can have many different internal geometric configurations while maintaining the same $\kappa$ and thus the same OLC. Different configurations yield different log-spectral measures and hence different Fourier transforms $\Phi_A(t)$, even though they have identical condition numbers. This multiplicity of spectral geometries is the root cause of the entropy and purity spreads (Propositions 5.12 and 5.16) for $d=3$.

\begin{corollary}[Near-pure tail of the purity spread]\label{cor:purity-tail}
As $\kappa\to\infty$ (equivalently $\eta:=\tfrac12\ln\kappa=\QLC\to\infty$),
\[
1-P_{\max}(\eta) = 4e^{-2\eta}\big(1+O(e^{-2\eta})\big), \qquad
\tfrac12-P_{\min}(\eta) = \tfrac12e^{-2\eta}\big(1+O(e^{-2\eta})\big),
\]
equivalently $\ln\big(1-P_{\max}(\eta)\big)=\ln4-2\eta+O(e^{-2\eta})$ and $\ln\big(\tfrac12-P_{\min}(\eta)\big)=-\ln2-2\eta+O(e^{-2\eta})$.
\end{corollary}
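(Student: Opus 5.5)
The plan is to work directly from the closed forms in Proposition \ref{prop:purity-spread}, substitute $\kappa=e^{2\eta}$, and expand in the small parameter $\varepsilon:=1/\kappa=e^{-2\eta}$. Every quantity involved is a rational function of $\kappa$, so the expansions are geometric-series expansions of rational functions in $\varepsilon$. The $O(e^{-2\eta})$ error terms are then just $O(\varepsilon)$ relative corrections. No transcendental equation appears, unlike Corollary \ref{cor:entropy-asymp}(b).

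\emph{Step 1 (maximum).} Since $P_{\max}=P_Q=(\kappa^2+2)/(\kappa+2)^2$, compute exactly
\[
1-P_Q(\kappa)=\frac{(\kappa+2)^2-(\kappa^2+2)}{(\kappa+2)^2}=\frac{4\kappa+2}{(\kappa+2)^2}.
\]
Dividing numerator and denominator by $\kappa^2$ gives
\[
1-P_Q=\varepsilon\,\frac{4+2\varepsilon}{(1+2\varepsilon)^2}=4\varepsilon\,\frac{1+\varepsilon/2}{(1+2\varepsilon)^2}=4\varepsilon\bigl(1+O(\varepsilon)\bigr).
\]
The rational factor is analytic at $\varepsilon=0$ with value $1$. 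This is $4e^{-2\eta}(1+O(e^{-2\eta}))$.

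\emph{Step 2 (minimum).} Using $P_{\min}=(\kappa^2+1)/(2(\kappa^2+\kappa+1))$, compute
\[
\tfrac12-P_{\min}=\frac{(\kappa^2+\kappa+1)-(\kappa^2+1)}{2(\kappa^2+\kappa+1)}=\frac{\kappa}{2(\kappa^2+\kappa+1)}=\frac{\varepsilon}{2(1+\varepsilon+\varepsilon^2)}=\tfrac12\varepsilon\bigl(1+O(\varepsilon)\bigr).
\]

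\emph{Step 3 (log forms).} Take logarithms of both expansions, using $\ln(1+O(\varepsilon))=O(\varepsilon)$ and $\ln\varepsilon=-2\eta$. This yields $\ln(1-P_{\max})=\ln4-2\eta+O(e^{-2\eta})$ and $\ln(\tfrac12-P_{\min})=-\ln2-2\eta+O(e^{-2\eta})$.

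The only point needing care, which I regard as the (mild) main obstacle, is making sure the right closed forms are used for all large $\kappa$. Proposition \ref{prop:purity-spread} gives $P_{\max}=P_Q$ for every $\kappa>1$, and it places the vertex $t_*$ strictly inside the interval for every $\kappa>1$, so $P_{\min}$ is the vertex value throughout. Both facts are already established, so no case split at large $\kappa$ is needed. Beyond that, the argument reduces to checking the two numerator simplifications $4\kappa+2$ and $\kappa$.
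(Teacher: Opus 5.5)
Your proposal is correct and is essentially the paper's own proof. The paper takes the closed forms $P_{\max}=P_Q$ and $P_{\min}$ from Proposition~\ref{prop:purity-spread}, simplifies $1-P_Q=(4\kappa+2)/(\kappa+2)^2$ and $\tfrac12-P_{\min}=\kappa/\bigl(2(\kappa^2+\kappa+1)\bigr)$ exactly, expands in $1/\kappa=e^{-2\eta}$, and takes logarithms — the same steps you give, with your explicit $\varepsilon$-rational factors only making the $O(e^{-2\eta})$ terms slightly more transparent.
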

\begin{proof}
With $\kappa=e^{2\eta}$, $1-P_Q(\kappa)=\big[(\kappa+2)^2-(\kappa^2+2)\big]/(\kappa+2)^2=(4\kappa+2)/(\kappa+2)^2=(4/\kappa)\big(1+O(1/\kappa)\big)=4e^{-2\eta}\big(1+O(e^{-2\eta})\big)$. Similarly $\tfrac12-P_{\min}(\kappa)=\big[(\kappa^2+\kappa+1)-(\kappa^2+1)\big]/\big(2(\kappa^2+\kappa+1)\big)=\kappa/\big(2(\kappa^2+\kappa+1)\big)=\big(1/(2\kappa)\big)\big(1+O(1/\kappa)\big)=\tfrac12e^{-2\eta}\big(1+O(e^{-2\eta})\big)$. Taking logarithms gives the stated asymptotics.
\end{proof}

\begin{remark}\label{rem:purity-slope}
The slope $-2$ in Corollary \ref{cor:purity-tail} matches the slope $-2$ found for the entropy tail in Corollary \ref{cor:entropy-asymp}(b). This is not a coincidence: in both boundary families, the smallest eigenvalue $\lambda_3=t/\kappa$ scales as $\Theta(1/\kappa)=\Theta(e^{-2\eta})$ near the relevant boundary, and it is this common exponential rate -- not any special feature of entropy or purity individually -- that fixes the slope. We do not attempt to establish this as a general statement for arbitrary spectral functionals $\rho\mapsto-\operatorname{Tr}f(\rho)$ (e.g.\ R\'enyi entropies of general order); we record it as a natural question in Section \ref{sec:open}.
\end{remark}

\subsection{Entanglement Remarks}\label{rem:entanglement-section}

The noncommuting equality example from Theorem \ref{thm:threshold}, after normalisation to unit trace, gives two density matrices $\rho_X,\rho_Y$ with additive OLC but noncommuting. For $\rho_Y=Y/9$ with $Y$ as in the example, the partial transpose over the second qubit yields the matrix
\[
\rho_Y^{T_2} = \frac19\begin{pmatrix}4&0&0&1\\0&2&0&0\\0&0&2&0\\1&0&0&1\end{pmatrix}.
\]
Its eigenvalues are $\tfrac{5+\sqrt{13}}{18},\tfrac29,\tfrac29,\tfrac{5-\sqrt{13}}{18}$, all strictly positive. Hence the PPT criterion is satisfied; for two-qubit states this implies separability. Thus saturation of subadditivity does not require entanglement.

By Remark \ref{rem:tensor-irred} below, we can say more precisely where a genuinely noncommuting saturating example must live: it cannot be a pure tensor product of factors of dimension $\le3$, since Proposition \ref{prop:tensor-transfer} rules those out regardless of total dimension.

\begin{corollary}\label{rem:tensor-irred}
Any noncommuting equality-saturating pair of density matrices $\rho,\sigma$ -- entangled or not -- must be tensor-irreducible, in the sense that it cannot be written as $\rho=\rho_1\otimes\cdots\otimes\rho_k$, $\sigma=\sigma_1\otimes\cdots\otimes\sigma_k$ with every factor of dimension $\le3$; it must be built (as in Theorem \ref{thm:threshold}) from a block that does not itself decompose this way.
\end{corollary}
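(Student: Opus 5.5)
The plan is to argue by contraposition, reducing the statement to Proposition \ref{prop:tensor-transfer}. Let $\rho,\sigma$ be density matrices with $\QLC(\rho\sigma)=\QLC(\rho)+\QLC(\sigma)$ and $\rho\sigma\neq\sigma\rho$. Suppose, for contradiction, that $\rho=\rho_1\otimes\cdots\otimes\rho_k$ and $\sigma=\sigma_1\otimes\cdots\otimes\sigma_k$ with every factor of dimension $n_i\le3$, where $\rho_i$ and $\sigma_i$ act on the same $n_i$-dimensional leg.

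The first step is to put the factors in the setting of Proposition \ref{prop:tensor-transfer}, which requires $\rho_i,\sigma_i\in M_{n_i}(\mathbb{C})_{++}$. For invertibility: since $\rho$ has finite OLC (otherwise the equality would be the degenerate $\infty=\infty$ case, which I would exclude by restricting to invertible density matrices, as elsewhere in the section), the eigenvalues of $\rho$ are the products of eigenvalues of the $\rho_i$, so every $\rho_i$ is invertible. For positivity: each $\rho_i$ is only determined up to a scalar, so I will fix the scalars. For a positive product of Kronecker factors, the factors can be simultaneously rescaled to be positive. Concretely, each $\rho_i$ is a scalar multiple of a positive matrix, via the standard fact that a Hermitian-or-positive tensor product has factors proportional to Hermitian ones. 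The phases can then be absorbed, because $\lambda A\otimes\mu B=(\lambda\mu)A\otimes B$ and positive scaling leaves $\kappa$, $\QLC$ and commutativity unchanged. This normalisation is the only step needing care, and I expect it to be the main, though minor, obstacle. I would handle it by noting that $\rho_i\otimes(\text{rest})=\rho$ is positive, which forces $\rho_i^*\otimes(\text{rest})^*=\rho_i\otimes(\text{rest})$, so $\rho_i^*=c\rho_i$ with $|c|=1$; after a phase rotation $\rho_i$ is Hermitian, and a sign choice makes it positive, since its spectrum cannot contain both signs without producing mixed signs in $\rho$ (the other factors being fixed and nonzero).

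With the factors normalised, Proposition \ref{prop:tensor-transfer} applies directly. Equality in subadditivity for $\rho,\sigma$ gives factorwise equality $\kappa(\rho_i\sigma_i)=\kappa(\rho_i)\kappa(\sigma_i)$ through Lemma \ref{lem:tight-product}. Since every $n_i\le3$, Theorem \ref{thm:threshold} then gives $\rho_i\sigma_i=\sigma_i\rho_i$ for each $i$. Hence $\rho\sigma=\bigotimes_i\rho_i\sigma_i=\bigotimes_i\sigma_i\rho_i=\sigma\rho$, which contradicts noncommutativity. So no such decomposition exists, and the pair is tensor-irreducible in the stated sense.

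The argument never uses separability or entanglement, so the conclusion holds for entangled and unentangled pairs alike. The final clause, that the pair must be built from a block that does not decompose this way, is just a restatement of this contrapositive. It is illustrated by the $4\times4$ block of Theorem \ref{thm:threshold}, where $Y$ mixes $e_2,e_3$ and is therefore not a Kronecker product of two $2\times2$ factors compatible with $X$; I would not attempt a finer structural characterisation.
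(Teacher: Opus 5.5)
Your proposal is correct and follows the paper's own proof, which is just the contrapositive of Proposition \ref{prop:tensor-transfer}: a decomposition with every $n_i\le3$ forces $\rho_i\sigma_i=\sigma_i\rho_i$ for all $i$, hence $\rho\sigma=\sigma\rho$. The paper leaves implicit the two hypotheses you make explicit, and your argument for the normalisation is sound. The first is invertibility, which is genuinely needed: under the $+\infty$ convention, a pure qubit state and a noncommuting qubit state would trivially ``saturate'' the law. The second is the phase/sign normalisation of each Kronecker factor, so that $\rho_i,\sigma_i\in M_{n_i}(\mathbb{C})_{++}$ as that proposition requires.
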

\begin{proof}
Immediate from Proposition \ref{prop:tensor-transfer}: any such decomposition with all $n_i\le3$ would force $\rho_i\sigma_i=\sigma_i\rho_i$ for every $i$, hence $\rho\sigma=\sigma\rho$.
\end{proof}

\begin{remark}\label{rem:entanglement}
As noted in Remark \ref{rem:polar}, it is tempting to try to extend Theorem \ref{thm:threshold} to general (not necessarily positive) invertible $X,Y$ by passing to absolute values $|X|,|Y|$, since $\kappa(X)=\kappa(|X|)$ and $\QLC(X)=\QLC(|X|)$. This is valid for statements purely about $\kappa$ or OLC, but not for the commutativity conclusion itself: $|X|$ and $|Y|$ commuting says nothing about whether $X$ and $Y$ commute (Pauli matrices $X=\sigma_x$, $Y=\sigma_y$ give $|X|=|Y|=I$ trivially commuting while $XY=-YX$). Theorem \ref{thm:threshold} genuinely requires $X,Y$ positive from the outset, since Lemma \ref{lem:align} uses the eigenspaces $\Emax,\Emin$ of $X,Y$ themselves. One can engineer entangled examples by choosing the common extremal eigenspaces to be entangled; we leave the explicit construction, constrained now by Proposition \ref{prop:tensor-transfer} and Corollary \ref{rem:tensor-irred}, for future work.
\end{remark}

\subsection{The Holevo Quantity and the Quantum Jensen--Shannon Divergence}\label{sec:qjsd}

For an ensemble $\{(p_i,\rho_i)\}_{i=1}^N$ of quantum states with average state $\bar\rho=\sum_i p_i\rho_i$, the Holevo $\chi$-quantity $\chi=S(\bar\rho)-\sum_i p_iS(\rho_i)$ upper-bounds the classically accessible information about the ensemble label under any measurement. For the equiprobable binary case $\{(\tfrac12,\rho_1),(\tfrac12,\rho_2)\}$, $\chi$ coincides with the quantum Jensen--Shannon divergence,
\[
\operatorname{QJSD}(\rho_1,\rho_2) := S(\bar\rho) - \tfrac12\big[S(\rho_1)+S(\rho_2)\big], \qquad \bar\rho=\tfrac12(\rho_1+\rho_2),
\]
a quantity already flagged, but not developed, as a keyword of our earlier paper \cite{abed2024}. We record here the exact qubit formula this connection yields, together with a genuinely dimension-free structural fact about $\bar\rho$; we also record, as an explicit warning to a future reader, why the most natural attempt to extend the qubit formula into a nontrivial higher-dimensional \emph{bound} on QJSD fails.

\begin{figure}[htbp]
\centering
\includegraphics[width=0.95\linewidth]{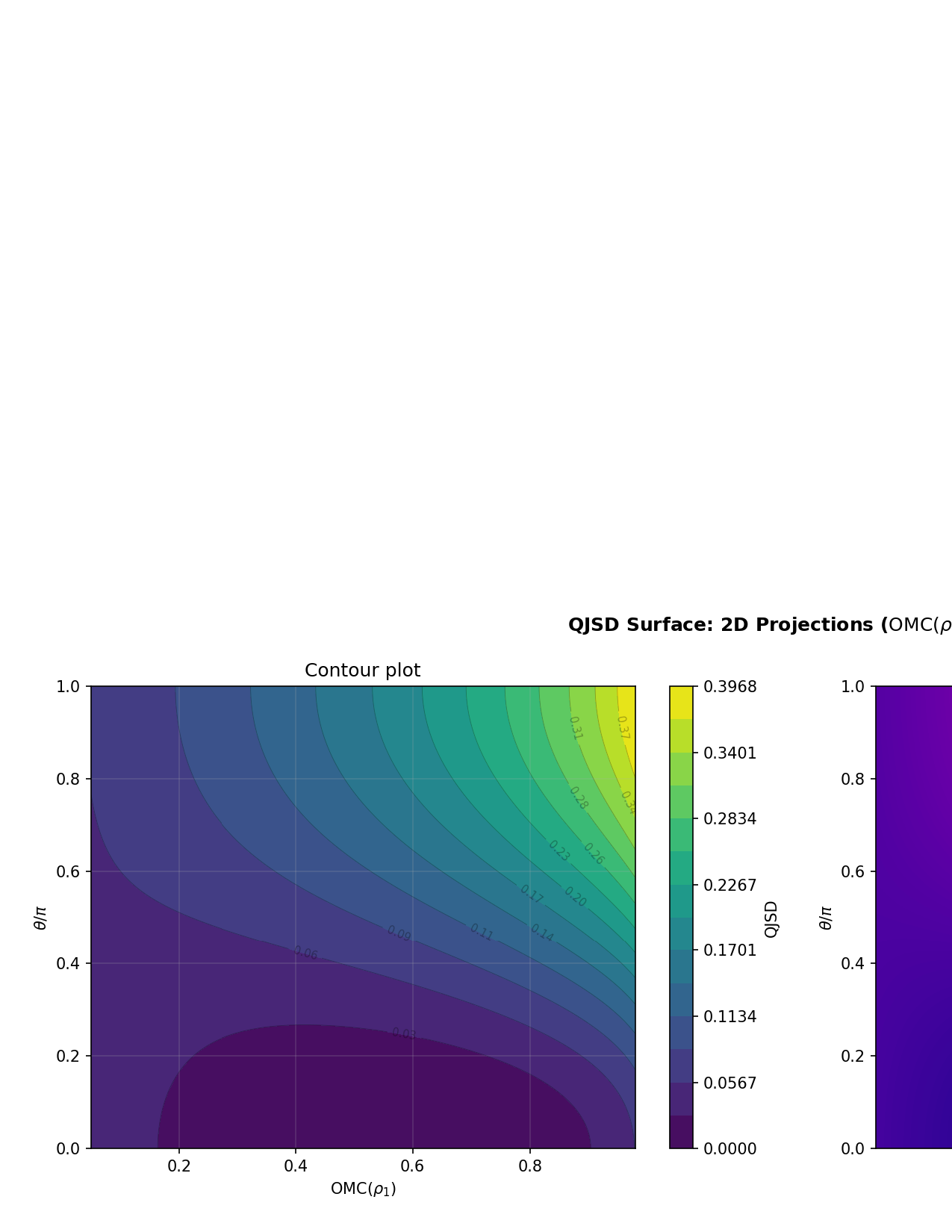}
\caption{\textbf{QJSD surface: 2D projections (Prop. 5.24).} Two complementary planar views of QJSD as a function of $\mathrm{OMC}(\rho_1)$ and the relative angle $\theta$, at fixed $\mathrm{OMC}(\rho_2)=0.6$. Left: contour plot with labeled levels, revealing non-uniform level spacing (denser near maximum where structure is richest). Right: heat map with alternative colormap (plasma) to emphasize the zone of high QJSD. Both panels show the same underlying landscape but highlight different structural features.}
\label{fig:qjsd_contours}
\end{figure}

\begin{figure}[htbp]
\centering
\includegraphics[width=0.95\linewidth]{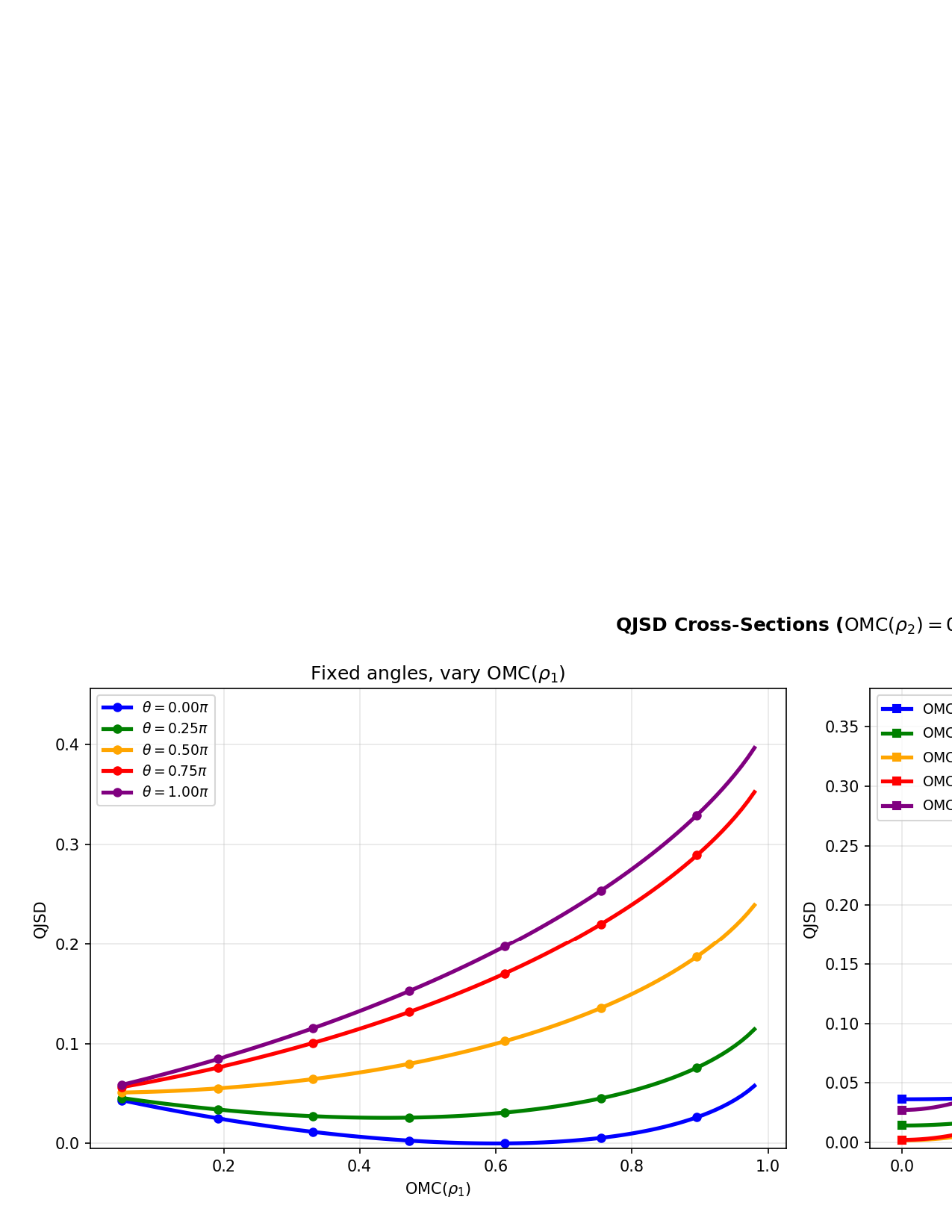}
\caption{\textbf{QJSD cross-sections (Prop. 5.24).} Slices through the QJSD surface, at fixed $\mathrm{OMC}(\rho_2)=0.6$. Left: QJSD as a function of $\mathrm{OMC}(\rho_1)$ for five fixed angles ($\theta = 0, \pi/4, \pi/2, 3\pi/4, \pi$), showing monotone increase with $\mathrm{OMC}(\rho_1)$ and $\theta=\pi$ (orthogonal) always dominating. Right: QJSD as a function of $\theta$ for five fixed values of $\mathrm{OMC}(\rho_1)$ ($0.1, 0.3, 0.5, 0.7, 0.9$), showing dramatic rise toward $\theta=\pi$ (orthogonal states) and vanishing at $\theta=0$ (commuting states).}
\label{fig:qjsd_slices}
\end{figure}

\begin{figure}[htbp]
\centering
\includegraphics[width=0.95\linewidth]{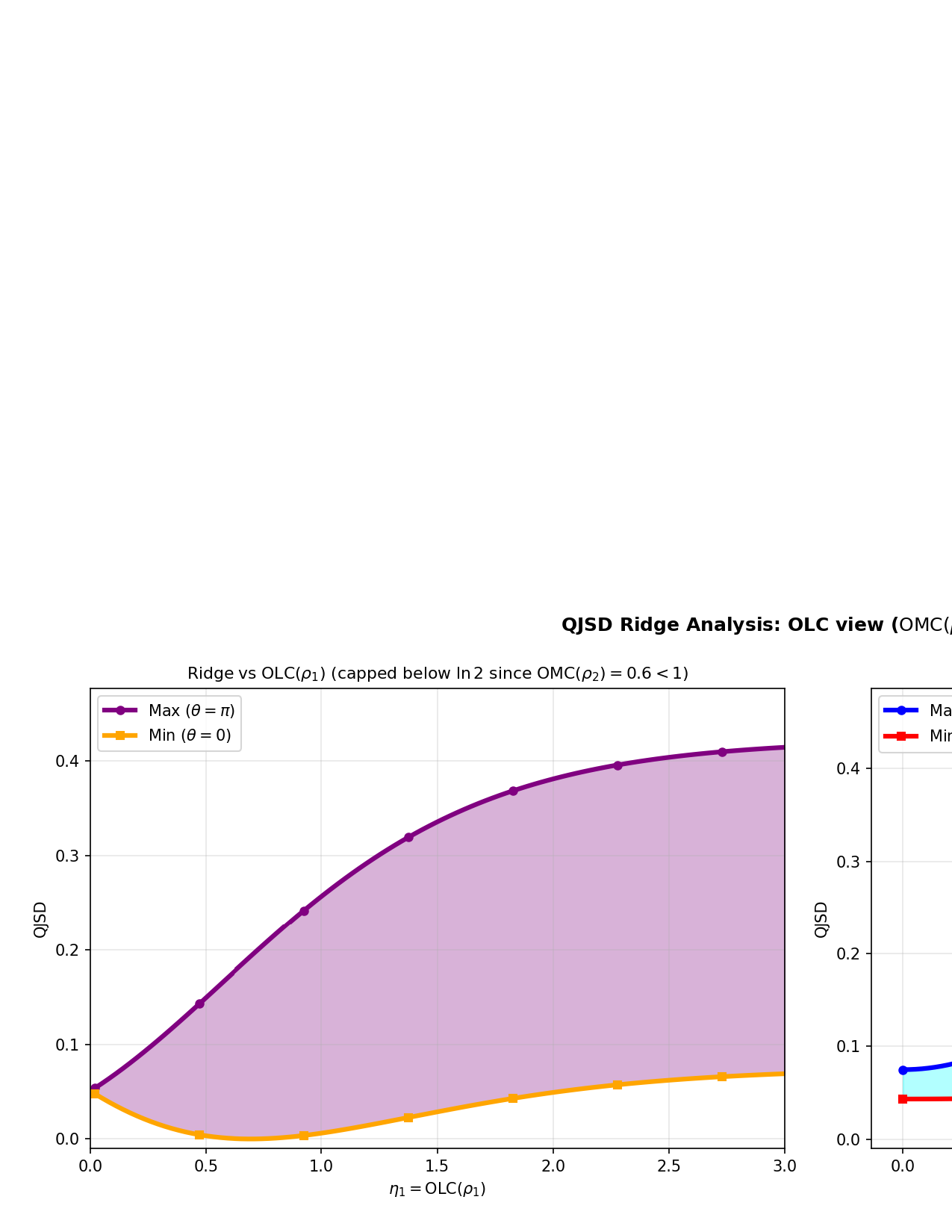}
\caption{\textbf{QJSD ridge analysis: max/min traces (Prop. 5.24), OLC view.} At fixed $\mathrm{OMC}(\rho_2)=0.6$. Left: along $\eta_1=\mathrm{OLC}(\rho_1)$ (the unbounded coordinate, as opposed to the bounded $\mathrm{OMC}(\rho_1)\in[0,1)$ used in Fig. 5c), the maximum QJSD (at $\theta=\pi$) grows toward $\ln 2$ as $\eta_1\to\infty$; the minimum (at $\theta=0$) grows more slowly, showing that orthogonal orientation dominates even at modest OLC. Right: along $\theta$, the maximum saturates at $\ln 2$ as $\theta \to \pi$ with $\mathrm{OMC}(\rho_1) \to 1$ (orthogonal pure states); minimum structure near $\theta=0$ scales with state mixture. Shaded regions show the admissible range at each parameter value.}
\label{fig:qjsd_ridges}
\end{figure}

\begin{proposition}[Exact qubit QJSD formula]\label{prop:qjsd-qubit}
Let $\rho_1,\rho_2$ be qubit density matrices with Bloch vectors $\vec r_1,\vec r_2$ of lengths $r_1,r_2\in[0,1)$ and angle $\theta$ between them, and write $H(x):=\ln2-\tfrac12\big[(1+x)\ln(1+x)+(1-x)\ln(1-x)\big]$ for the qubit entropy-vs-OMC function of Proposition \ref{prop:qubit} (extended by $H(1):=0$). Then
\[
\bar r := \QMC(\bar\rho) = \tfrac12\sqrt{r_1^2+r_2^2+2r_1r_2\cos\theta}, \qquad
\operatorname{QJSD}(\rho_1,\rho_2) = H(\bar r) - \tfrac12\big[H(r_1)+H(r_2)\big].
\]
\end{proposition}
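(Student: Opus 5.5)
The plan is to reduce everything to Bloch-vector arithmetic and then apply Proposition \ref{prop:bloch} and Proposition \ref{prop:qubit} three times, once to each of $\rho_1$, $\rho_2$, $\bar\rho$.

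\emph{Step 1: the averaged state.} Write $\rho_j=\tfrac12(I+\vec r_j\cdot\vec\sigma)$ for $j=1,2$. Since the Bloch parametrisation is affine,
\[
\bar\rho=\tfrac12(\rho_1+\rho_2)=\tfrac12\bigl(I+\bar{\vec r}\cdot\vec\sigma\bigr),\qquad \bar{\vec r}:=\tfrac12(\vec r_1+\vec r_2).
\]
The law of cosines gives $|\bar{\vec r}|=\tfrac12\sqrt{r_1^2+r_2^2+2r_1r_2\cos\theta}$. By the triangle inequality this length is at most $\tfrac12(r_1+r_2)<1$, so $\bar\rho$ is again an invertible qubit state. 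Proposition \ref{prop:bloch} applied to $\bar\rho$ then gives $\QMC(\bar\rho)=|\bar{\vec r}|=\bar r$, which is the first formula.

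\emph{Step 2: entropies.} Proposition \ref{prop:bloch} gives $\QMC(\rho_j)=r_j$. Proposition \ref{prop:qubit}, in the form (16), then gives $S(\rho_j)=H(r_j)$ and $S(\bar\rho)=H(\bar r)$. Substituting these three values into the definition of $\operatorname{QJSD}$ yields the second formula directly.

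\emph{Degenerate case.} The only point needing care is $\theta$ being undefined when $r_1=0$ or $r_2=0$. In that case the cosine term carries the factor $r_1r_2=0$, so the formula holds for any choice of $\theta$.

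\emph{Extension $H(1):=0$.} This value is never needed, since $r_j<1$ and hence $\bar r<1$. It is consistent with the limit of (16) as $x\to1^-$, namely $S\to0$ for pure states.

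I expect no real obstacle. The one step that must not be skipped is checking $\bar r<1$, since that is what licenses applying Proposition \ref{prop:qubit} (stated for $x\in[0,1)$) to $\bar\rho$.
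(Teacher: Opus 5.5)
Your proposal is correct and follows the paper's proof essentially verbatim: average the Bloch vectors, get $\bar r$ from the law of cosines, and then apply Propositions \ref{prop:bloch} and \ref{prop:qubit} to $\rho_1$, $\rho_2$ and $\bar\rho$. Your explicit check that $\bar r\le\tfrac12(r_1+r_2)<1$ is a small addition the paper leaves implicit, and it is exactly what licenses applying those two propositions to $\bar\rho$.
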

\begin{proof}
The Bloch vector of $\bar\rho=\tfrac12(\rho_1+\rho_2)$ is $\tfrac12(\vec r_1+\vec r_2)$, so
\[
\bar r=\big|\tfrac12(\vec r_1+\vec r_2)\big|=\tfrac12\sqrt{|\vec r_1|^2+|\vec r_2|^2+2\vec r_1\cdot\vec r_2}=\tfrac12\sqrt{r_1^2+r_2^2+2r_1r_2\cos\theta}.
\]
By Proposition \ref{prop:qubit}, $S(\rho)=H(\QMC(\rho))$ for any qubit density matrix, and by Proposition \ref{prop:bloch}, $\QMC(\rho_1)=r_1$, $\QMC(\rho_2)=r_2$, $\QMC(\bar\rho)=\bar r$; substituting into the definition of QJSD gives the stated formula.
\end{proof}

\begin{remark}[Consistency checks]
Two extreme cases recover known values. If $\theta=0$ ($\rho_1,\rho_2$ commute, i.e.\ diagonal in a common basis), $\bar r=(r_1+r_2)/2$ and the formula reduces to the classical Jensen--Shannon divergence between the two Bernoulli distributions $(1\pm r_i)/2$, as it must for commuting states. If $\theta=\pi$ and $r_1=r_2=1$ (orthogonal pure states, e.g.\ $|0\rangle\langle0|$ and $|1\rangle\langle1|$), then $\bar r=0$ and $\operatorname{QJSD}=H(0)-\tfrac12[H(1)+H(1)]=\ln2-0=\ln2$, the known maximal value of QJSD for perfectly distinguishable states.
\end{remark}

\begin{remark}[Relation to the classical Michelson--JSD equivalence, and why it does not lift past $d=2$]\label{rem:bruni}
Bruni, Rossi, and Vitulano \cite{bruni2012} prove a formal equivalence, for classical two-level (binary) distributions, between the Michelson contrast of an image distortion and the Jensen--Shannon divergence it induces. Proposition \ref{prop:qjsd-qubit} is exactly the operator lift of that equivalence to $d=2$: $r_i=\QMC(\rho_i)$ plays the role of the Michelson contrast, and QJSD is recovered as a closed-form function of $(r_1,r_2,\theta)$ alone. For $d=2$ this is a genuine, checkable correspondence, not a cosmetic analogy.

It does not extend to $d\ge3$, and the reason is exactly Proposition \ref{prop:spread}: for $d\ge3$, OLC (hence OMC) no longer determines the spectrum, so it cannot determine QJSD either. Concretely, fix $\kappa_1=5$ and take the two extremal $d=3$ spectra of Proposition \ref{prop:spread} sharing that condition number, $\lambda^{\min\text{-entropy}}=(5,1,1)/7$ and $\lambda^{\max\text{-entropy}}\approx(0.596,0.331,0.072)$ (both diagonal in the same basis, so no angle ambiguity enters); pair each against a fixed third state $\rho_2$ with spectrum $(3,1.7,1)/5.7$. Both choices of $\rho_1$ have identical $\QMC(\rho_1)$, identical $\QMC(\rho_2)$, and identical (trivial, commuting) relative orientation with $\rho_2$ -- yet direct computation gives $\operatorname{QJSD}=0.0220$ for the min-entropy spectrum against $0.0080$ for the max-entropy spectrum, a $\sim\!170\%$ relative difference. So already the most favorable case for a closed-form generalization -- commuting states, where there is no orientation angle to worry about -- rules one out: no function of $(\QMC(\rho_1),\QMC(\rho_2),\theta)$ alone can equal QJSD once $d\ge3$. (A structural reason sits underneath the numerics too: a $d$-level Bloch-type representation carries $d^2-2$ independent relative-orientation parameters between two states, not one, so $\theta$ alone is the wrong-shaped object to parametrize QJSD once $d\ge3$, quite apart from whether OMC pins down the spectrum.) The Bruni-type equivalence is therefore a $d=2$ phenomenon, not a defect of QJSD or of OMC individually -- it is a direct corollary of the same non-bijectivity already established in Proposition \ref{prop:spread}.
\end{remark}

\begin{figure}[htbp]
\centering
\includegraphics[width=0.95\linewidth]{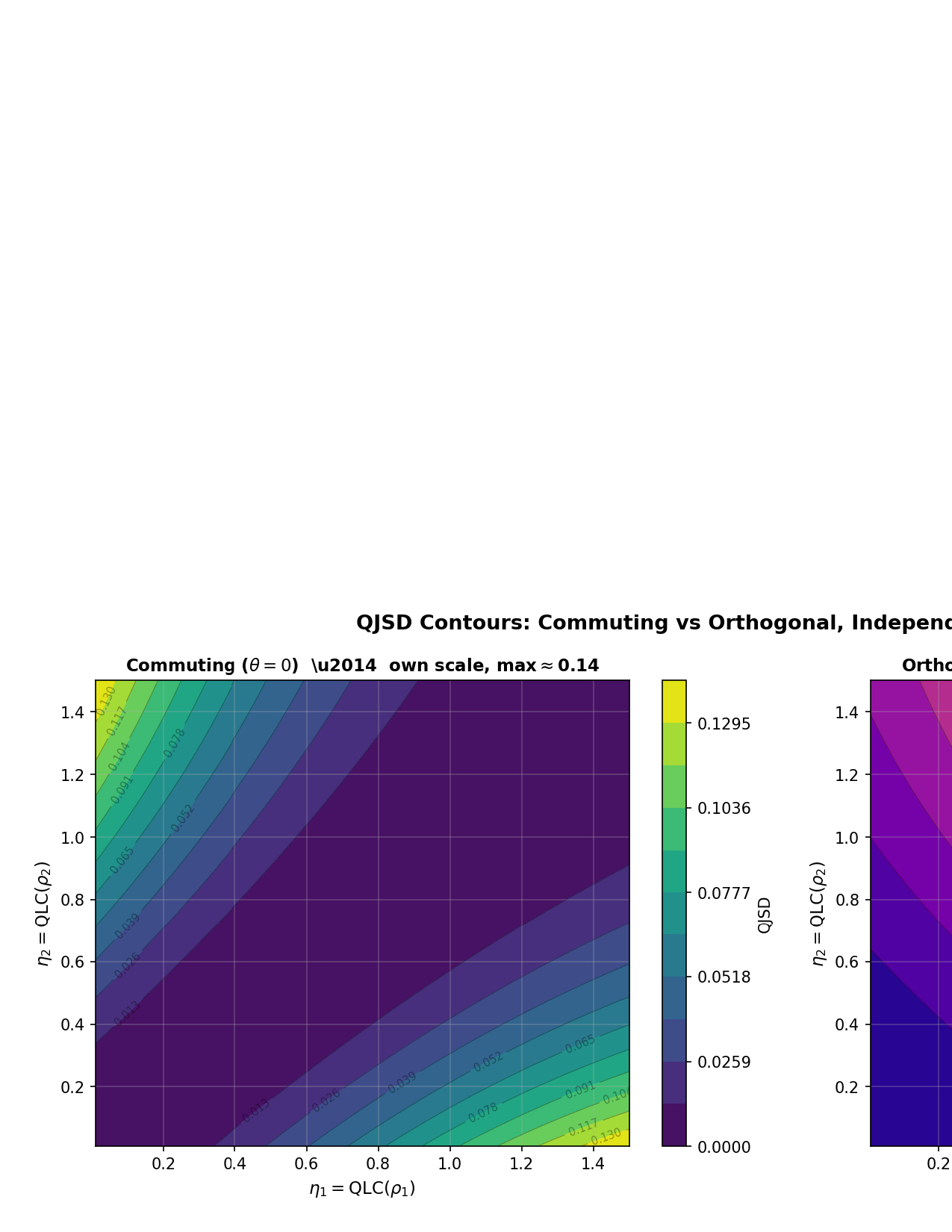}
\caption{\textbf{QJSD contours: commuting vs orthogonal (Prop. 5.26).} Contour plots of QJSD over $\eta_1,\eta_2\in[0,1.5]$, the range where both cases actually vary (a shared $[0,\ln 2]$ scale would make the left panel look flat, since it never exceeds $0.14$ while the right panel reaches $0.50$ -- so each panel uses its own scale, noted in the axis label). Left: commuting case ($\theta=0$), true maximum $\approx0.14$ over this window. Right: orthogonal case ($\theta=\pi$), true maximum $\approx0.50$, approaching $\ln 2$ only as both $\eta_i\to\infty$.}
\label{fig:qjsd-contours}
\end{figure}

\begin{figure}[htbp]
\centering
\includegraphics[width=0.95\linewidth]{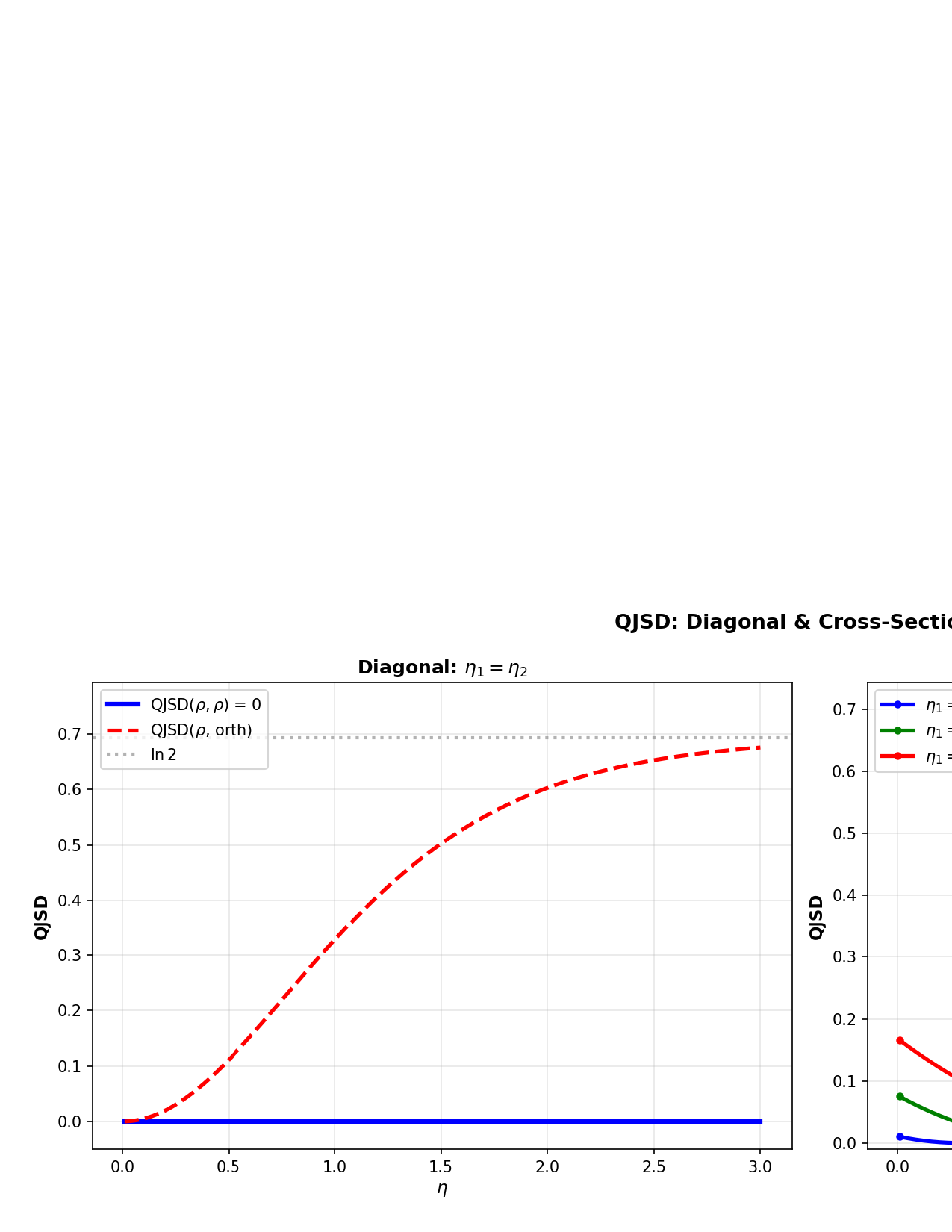}
\caption{\textbf{QJSD diagonal and cross-sections (Prop. 5.26).} Left: diagonal analysis ($\eta_1=\eta_2$) confirms QJSD($\rho,\rho$) = 0 by construction, and shows the maximum possible divergence when comparing a state to an orthogonal one. Right: cross-sections with fixed $\eta_1$ values (0.3, 0.9, 1.8) show QJSD increases monotonically with $\eta_2$, demonstrating how purity of the second state increases distinguishability.}
\label{fig:qjsd-slices}
\end{figure}

\begin{figure}[htbp]
\centering
\includegraphics[width=0.95\linewidth]{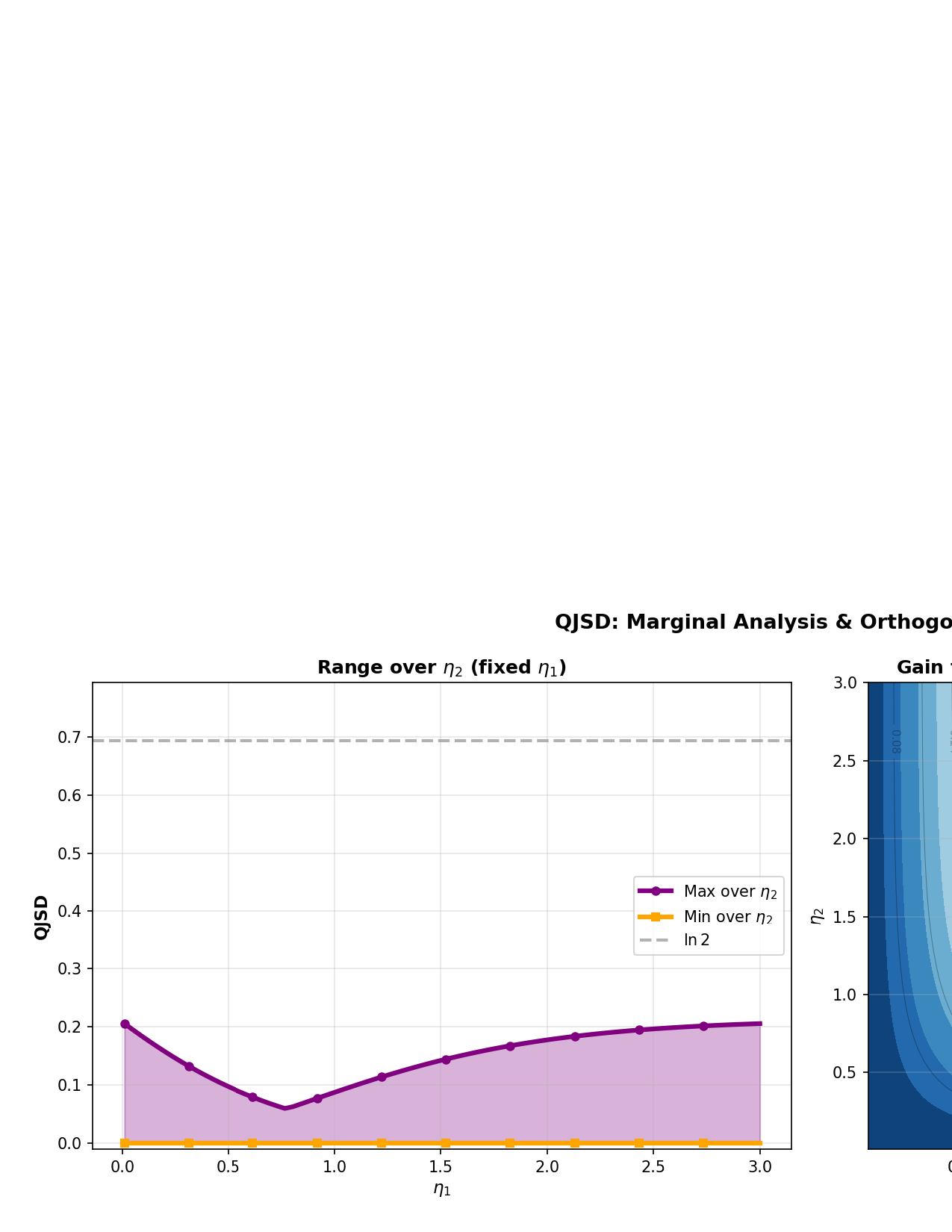}
\caption{\textbf{QJSD marginal analysis and orthogonal gain (Prop. 5.26).} Left: range of QJSD over $\eta_2$ for each fixed $\eta_1$, showing growing dispersion as state 1 becomes purer — purer states are more sensitive to the properties of state 2. Right: difference map quantifying the gain in QJSD from orthogonal ($\theta=\pi$) versus commuting ($\theta=0$) orientation as a function of both condition numbers. \emph{Key insight:} QJSD is not determined by OLC values alone; it requires knowledge of both condition numbers and their relative orientation.}
\label{fig:qjsd-marginals}
\end{figure}

\begin{proposition}[Ensemble-averaging never worsens conditioning]\label{prop:ensemble-contraction}
Let $\{(p_i,\rho_i)\}_{i=1}^N$ be a finite ensemble of positive invertible operators $\rho_i\in B(H)_{++}$ with weights $p_i>0$, and $\bar\rho:=\sum_{i=1}^N p_i\rho_i$. Then
\[
\QLC(\bar\rho) \le \max_{1\le i\le N} \QLC(\rho_i).
\]
\end{proposition}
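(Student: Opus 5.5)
The plan is a direct two-line spectral argument rather than an appeal to the sum inequality (8)--(9), since the weights $p_i$ need not sum to $1$ and the bound should be self-contained. Set $m_i:=\inf\sigma(\rho_i)=\|\rho_i^{-1}\|^{-1}>0$ and $M_i:=\sup\sigma(\rho_i)=\|\rho_i\|$, so that $\kappa(\rho_i)=M_i/m_i$. Also set $K:=\max_i\kappa(\rho_i)$. The target is $\kappa(\bar\rho)\le K$, after which taking $\tfrac12\ln$ gives the claim, since $\tfrac12\ln$ is increasing.

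\textbf{Step 1 (operator sandwich).} For each $i$ we have $m_iI\le\rho_i\le M_iI$. Multiplying by $p_i>0$ and summing gives
\[
\Big(\sum_i p_im_i\Big)I\le\bar\rho\le\Big(\sum_i p_iM_i\Big)I .
\]
In particular $\bar\rho$ is positive and invertible, because $\sum_ip_im_i>0$. This holds verbatim in infinite dimensions, using only the order structure of $B(H)_{+}$.

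\textbf{Step 2 (spectral bounds).} From the sandwich we read off
\[
\inf\sigma(\bar\rho)\ge\sum_ip_im_i,\qquad \sup\sigma(\bar\rho)\le\sum_ip_iM_i .
\]
Hence
\[
\kappa(\bar\rho)\le\frac{\sum_ip_iM_i}{\sum_ip_im_i}.
\]

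\textbf{Step 3 (mediant inequality).} We have $M_i=\kappa(\rho_i)m_i\le Km_i$ for every $i$. Therefore $\sum_ip_iM_i\le K\sum_ip_im_i$, and the ratio above is at most $K$. This is the only place where a little care is needed: the mediant of the ratios $M_i/m_i$ with positive weights never exceeds their maximum. With the substitution $M_i\le Km_i$ it is immediate.

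There is no real obstacle. The one point worth flagging is that the extremal spectral values of $\bar\rho$ are not simply the weighted averages of the $M_i$ and $m_i$, only bounded by them. The inequality direction from Step 1 is exactly the one needed, so no alignment of eigenspaces is required.

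Finally, I would note that this is a restatement of (9) applied to the operators $p_i\rho_i$, because of scale invariance: $\QLC(p_i\rho_i)=\QLC(\rho_i)$. The direct argument above avoids depending on the external citation for (8).
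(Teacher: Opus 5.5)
Your proof is correct. It reaches the result by a somewhat different route from the paper's.

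The paper's proof is the reduction you mention in your last paragraph. It applies the sum inequality (9) to $A_i:=p_i\rho_i$ and then uses scale invariance, $\QLC(p_i\rho_i)=\QLC(\rho_i)$. That takes two lines, but it rests on (8), which the paper imports from the authors' earlier work without proof.

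Your main argument proves the weighted statement from scratch, via the order sandwich $(\sum_i p_im_i)I\le\bar\rho\le(\sum_i p_iM_i)I$ and the mediant bound $\sum_i p_iM_i\le K\sum_i p_im_i$. This buys three things:
\begin{enumerate}
\item Self-containedness: taking all $p_i=1$, it is a proof of (8)--(9) themselves for invertible summands, valid verbatim in any Hilbert space.
\item An explicit check that $\bar\rho$ is invertible, which the paper leaves implicit.
\item The slightly sharper intermediate estimate $\kappa(\bar\rho)\le\sum_i p_iM_i/\sum_i p_im_i$.
\end{enumerate}

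One small correction to your stated motivation: unnormalised weights are not an obstacle to using (9). Inequality (9) has no normalisation constraint, and the weights are absorbed by scale invariance. The real reason to prefer your route is independence from the cited result.
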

\begin{proof}
Apply the sum inequality (9) to $A_i:=p_i\rho_i>0$: $\QLC\big(\sum_iA_i\big)\le\max_i\QLC(A_i)$. Since OLC is invariant under positive scaling (Section 2.2), $\QLC(A_i)=\QLC(p_i\rho_i)=\QLC(\rho_i)$, and $\sum_iA_i=\bar\rho$.
\end{proof}

\begin{remark}[A bound that does not work, recorded to save a future attempt]\label{rem:qjsd-vacuous}
It is natural to try to combine Proposition \ref{prop:ensemble-contraction} with the entropy bound (15) to obtain a higher-dimensional bound on QJSD: for the binary ensemble, (15) gives $S(\bar\rho)\le\ln d+2\QLC(\bar\rho)$, and Proposition \ref{prop:ensemble-contraction} gives $\QLC(\bar\rho)\le\max\{\QLC(\rho_1),\QLC(\rho_2)\}$, so
\[
\operatorname{QJSD}(\rho_1,\rho_2) \le S(\bar\rho) \le \ln d + 2\max\{\QLC(\rho_1),\QLC(\rho_2)\}.
\]
Since $\QLC\ge0$ always, the right-hand side is at least $\ln d\ge\ln2$ for $d\ge2$, so this bound is never better than the already-known dimension-free bound $\operatorname{QJSD}\le\ln2$ -- it is vacuous for every ensemble, in every dimension. We record this explicitly so the natural-looking combination is not rediscovered as if it were new content; the genuine content of this section is the exact formula of Proposition \ref{prop:qjsd-qubit} for $d=2$ and the dimension-free structural fact of Proposition \ref{prop:ensemble-contraction}, not a nontrivial bound on QJSD for $d\ge3$. Figure \ref{fig:vacuous-gap} shows this vacuity by plotting the gap between the naive bound and the true bound: it is strictly positive for all $\eta\ge0$ and all $d\ge2$.
\end{remark}

\begin{figure}[htbp]
\centering
\includegraphics[width=0.95\linewidth]{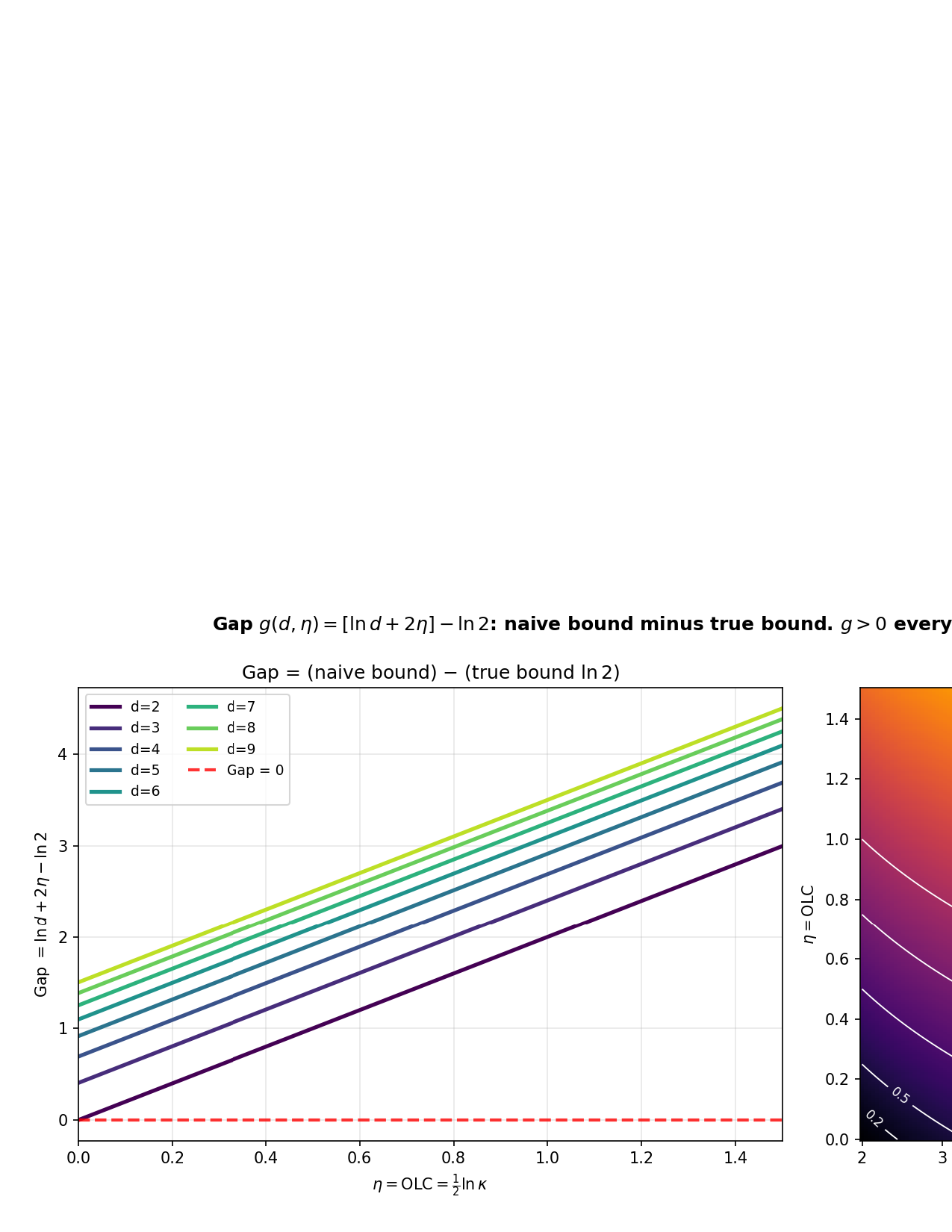}
\caption{\textbf{Vacuity of the QJSD bound (Remark 5.29).} We define the gap $g(d,\eta):=[\ln d+2\eta]-\ln 2$ between the naive bound $\ln d+2\eta$ (obtained by combining Proposition \ref{prop:ensemble-contraction} with the entropy bound (15)) and the true, dimension-free bound $\ln 2$: $g>0$ means the naive bound is strictly worse than what was already known. Left: $g$ for each dimension $d=2,\ldots,9$ over $\eta\in[0,1.5]$; every curve stays strictly positive, so the naive bound is never competitive on this range (and $g$ only grows for larger $\eta$). Right: the same gap over the $(d,\eta)$ plane, linear color scale, restricted to $\eta\in[0,1.5]$ -- this is the window where $g$ is small enough for the gradient to be informative; for $\eta\gtrsim2$, $g$ already exceeds $3$ for every $d$ shown, and a wider window would just look uniformly saturated.}
\label{fig:vacuous-gap}
\end{figure}

\section{Structural Interpretations, Outlook, and Conclusion}

\subsection{Rapidity and Hyperbolic-Geometric Interpretation}

The relation $\QMC(A)=\tanh(\QLC(A))$ also reveals a structural connection with the hyperbolic parametrisation underlying the notion of rapidity in special relativity. In one spatial dimension, a future-directed unit timelike vector in Minkowski space can be written as $u(\eta)=(\cosh\eta,\sinh\eta)$, where $\eta$ is the rapidity, and $\beta=v/c=\tanh\eta$. For collinear boosts, rapidities are additive: $\eta_{12}=\eta_1+\eta_2$.

A formally analogous bounded-to-unbounded parametrisation occurs for OMC: $\QMC(A)=\tanh(\QLC(A))$, $\QLC(A)=\operatorname{arctanh}(\QMC(A))$. Thus OLC can be viewed as a rapidity-type coordinate for the bounded contrast variable OMC. It is important, however, to distinguish this rapidity analogy from the operator-geometric interpretation established earlier. For positive invertible operators, $\QLC(A)=\delta_T([I],[A])$, so OLC is exactly the projective Thompson distance from the identity ray -- a genuine metric identification on the positive cone. By contrast, the connection with rapidity arises from the scalar hyperbolic parametrisation $x=\tanh t$, $t=\operatorname{arctanh} x$, and from the analogy between the corresponding composition structures.

Consequently, the Thompson geometry and the rapidity geometry are distinct geometric structures. The former is associated with the order and spectral structure of the positive operator cone, whereas the latter is associated with Lorentzian geometry and hyperbolic angles in Minkowski space. The common appearance of $\tanh$ does not identify these geometries. This distinction is particularly relevant for the composition law: rapidity is additive for collinear Lorentz boosts, $\eta_{12}=\eta_1+\eta_2$, whereas OLC satisfies only the subadditivity estimate $\QLC(AB)\le\QLC(A)+\QLC(B)$. Exact rapidity additivity is a consequence of the one-dimensional Lorentz boost group, whereas OLC subadditivity follows from the submultiplicativity of the operator condition number; noncommutativity and possible misalignment of extremal spectral directions prevent a general additive law \cite{rindler2006}.

\subsection{Structural Remarks: Thompson Geometry and Harmonic Analysis}

The geometric identification of Theorem \ref{thm:thompson} (OLC as Thompson distance) and the Fourier-theoretic explanation of Corollary \ref{cor:fourier-nonbij} (Section 5.5) together situate OLC within two complementary analytical frameworks. The Thompson metric governs the contraction and ordering properties of OLC on the positive cone, while the log-spectral Fourier perspective (Section 5.5) explains why functionals of the spectrum (entropy, purity, divergences) are constrained but not determined by OLC in dimensions $\ge3$. These perspectives may deepen through future work connecting them via harmonic analysis on convex cones of log-spectral measures, but the structural foundations are already visible in the present results.

\subsection{Open Questions}\label{sec:open}
\begin{itemize}
\item Whether the common slope $-2$ found for both the entropy tail (Corollary \ref{cor:entropy-asymp}(b)) and the purity tail (Corollary \ref{cor:purity-tail}) near a pure qubit state extends to R\'enyi entropies $S_\alpha$ of every order $\alpha$, as the mechanism of Remark \ref{rem:purity-slope} suggests.
\item Connection between the tensor-irreducibility structure of Theorem \ref{thm:threshold} and the representation theory of SU(2) and SU(3): whether the $n=4$ threshold is linked to the finite-dimensionality of irreducible representations, and whether analogous thresholds exist for higher groups or other operator systems.
\item Explicit construction of entangled density matrices with additive OLC, constrained by Proposition \ref{prop:tensor-transfer} to be tensor-irreducible (Corollary \ref{rem:tensor-irred}).
\item Relationship between the equality condition of Theorem \ref{thm:threshold} and Knabe-type local-to-global gap criteria for 1D spin chains, given the structural analogy between the $n\le3$ vs.\ $n\ge4$ threshold and block-size criteria.
\item Rigorous treatment of Theorem \ref{thm:threshold} for the intermediate infinite-dimensional regime (compact perturbations of the identity, or operators whose spectrum accumulates only away from the extremal points), as sketched in Remark \ref{rem:infinite-dim}.
\end{itemize}

\subsection{Speculative Directions}

\subsubsection{Optical Systems and Channel Structure}

The dimensional threshold $n\le3$ vs.\ $n\ge4$ has an intriguing parallel in optical systems. The classical Michelson contrast was originally defined for optical interferometry with single-wavelength light. In modern imaging and sensing, several levels of channel multiplexing are routinely used:

\begin{enumerate}
\item **Color imaging (d=3)**: RGB channels are standard. The operator Michelson contrast applies directly to the color-intensity matrix $M_\mathbf{I}:=\operatorname{diag}(R,G,B)$, as noted in Remark 5.2. The $d=3$ threshold of Theorem \ref{thm:threshold} thus sits exactly at the dimensionality of human color vision.

\item **Multispectral and hyperspectral imaging (d >> 3)**: Modern satellite and aerial imaging systems often acquire $d=8$–$d=224$ spectral bands \cite{goetz1985}. For these high-dimensional systems, our finite-dimensional threshold predicts that noncommuting equality cases in subadditivity should exist generically, reflecting more complex spectral-structure phenomena than in the $d=3$ regime. The threshold between ``typical'' commutativity ($d\le3$) and ``anomalous'' noncommutativity ($d\ge4$) may thus have operational significance: for RGB systems, aligned extremal spectra are forced; for hyperspectral systems, they are not.

\item **Polarization and quantum optics (d=4, 6)**: Fully-general polarization states of light lie in a 4-dimensional space (Stokes parameters, or equivalently the coherency matrix). Higher-order optical systems (orbital angular momentum, spatial modes) can involve $d\ge4$ degrees of freedom. Multimode quantum optics with entangled photons naturally involves tensor products of mode spaces, making Proposition \ref{prop:tensor-transfer} (tensor-irreducibility of the threshold) directly applicable: even a 2-photon system with correlated modes in $\mathbb{C}^2\otimes\mathbb{C}^2$ (d=4 total) will exhibit the full $d=4$ noncommuting behavior if the entanglement structure is not a pure product of two $d=2$ subsystems \cite{raussendorf2003,o2003}.
\end{enumerate}

The observation that our mathematical threshold coincides with the dimensionality of practical optical channel counts (3 for color, 4+ for advanced systems) invites the question: does the commutativity-forcing property of $d\le3$ have a physical origin in the geometry of optical state spaces? This remains open. At minimum, the coincidence suggests that the dimensional threshold is not a mathematical curiosity but potentially reflects real structure in signal processing and optics.

\subsubsection{Representation Theory and Phase Structure}

The recurrence of the dimensions $2$ and $3$ as the exact threshold for forced commutativity (Theorem \ref{thm:threshold}) is intriguing, given their role as the dimensions of the fundamental representations of $SU(2)$ and $SU(3)$. However, we have identified no representation-theoretic mechanism rigorously linking this numerology to Theorem \ref{thm:threshold} or Proposition \ref{prop:tensor-transfer}. A key obstacle is that the \textbf{condition number is blind to phases}: for any unitary $U$, $\kappa(U)=1$ (spectrum on the unit circle), hence $\MC(U)=0$ and $\QLC(U)=0$. This means OLC and the subadditivity analysis capture only magnitude, not phase information. 

To probe the $n\le3$ vs.\ $n\ge4$ dimensional threshold through representation-theoretic properties, one would need to extend the analysis beyond condition numbers to the **phase structure of the spectrum**. This naturally suggests a Fourier-analytic approach: the log-spectral Fourier transform $\Phi_A(t)=\frac1d\operatorname{Tr}(e^{it\log A})$ (Section 5.5) encodes both magnitude (through the spectral radius $\sup|\Phi_A(t)|$) and phase (through oscillations and winding as $t$ varies). A representation-theoretic interpretation of the dimensional threshold would likely emerge from analyzing the phase structure of $\Phi_A(t)$ for basis representations of $SU(2), SU(3),\ldots$ and understanding how this phase structure changes at $n=4$. This remains an open direction.

\subsubsection{Computational Analogy}

The identity $\QMC(A)=\tanh(\QLC(A))$ admits a computational reading: with $z=\QLC(A)$, $\QMC(A)=\tanh z$ is formally the output of the hyperbolic tangent applied to an unbounded pre-activation coordinate, in the sense familiar from neural-network activation functions \cite{goodfellow2016}. This should be regarded purely as a naming analogy, not as a proposed architecture or as evidence of any deeper connection.

\subsection{Conclusion}
We have studied the logarithmic parametrisation of the operator Michelson contrast, obtaining the subadditive functional $\QLC(A)=\tfrac12\ln\kappa(A)$, which we call the Operator Logarithmic Contrast. This logarithmic lift is the operator analogue of the classical logarithmic contrast and fits into a commutative diagram with the Michelson contrast. We proved its subadditivity, geometric interpretation as a projective Thompson distance, an accompanying Lipschitz-continuity property that also pins down the correct limiting behaviour at singular operators, and analysed the equality case, showing that $n=4$ is the smallest dimension in which noncommuting equality cases can occur -- and that this threshold is fundamentally about tensor-irreducibility rather than raw ambient dimension: equality forces commutativity for any total dimension, provided the operators factor as pure tensor products of blocks of size at most $3$, via a fully spelled-out inductive argument. Additional properties include contraction, sum inequalities, trace estimates, and estimates for double-sided products $X^{1/2}YX^{1/2}$. Applied to quantum information, these results yield an exact closed-form bijection between OMC and von Neumann entropy for qubits, purity bounds at fixed condition number for qutrits, and a proof -- established by complete analytic arguments rather than numerical checks -- that no bijection between OMC and entropy exists for $d\ge3$, together with explicit envelopes on the resulting ambiguity. The logarithmic parametrisation reveals a rich structure inherent in the operator Michelson contrast: not only as a geometric quantity (Thompson distance, Theorem \ref{thm:thompson}), but also as a spectral bandwidth determined by log-spectral distributions (Proposition \ref{prop:fourier-char} and Corollary \ref{cor:fourier-nonbij}, Section 5.5). These foundational perspectives invite future development of finer Fourier-theoretic properties, connections to commutator bounds and Baker--Campbell--Hausdorff theory, and applications to related operator-theoretic problems where the interplay between condition numbers and spectral structure plays a central role.

\appendix
\section{Reproducing the Figures}\label{app:code}

Every figure in this paper is generated by the single self-contained script below. It has no dependencies beyond \texttt{numpy} and \texttt{matplotlib}, and writes each PNG to the current working directory under the filename referenced by the corresponding \verb|\includegraphics| command in the source, so it can be run as-is to regenerate the figure set. The script is organised into six parts: shared helper functions (binary entropy, the extremal $d=3$ spectra of Propositions \ref{prop:spread} and \ref{prop:purity-spread}, and the qubit QJSD formula of Proposition \ref{prop:qjsd-qubit}); Figures 1--4 (the qubit entropy--contrast relation and the $d=3$ entropy spread); Figures 5b--5d (the QJSD surface as a function of Bloch radius and angle); Figures 6a--6c (QJSD as a function of the two condition numbers $\eta_1,\eta_2$); Figures 7a--7d (entropy and purity spreads for $d=3$); and Figure 10a (the vacuity of the naive QJSD bound, Remark \ref{rem:qjsd-vacuous}). A final function reproduces the numerical counterexample of Remark \ref{rem:bruni} in isolation, printing the two QJSD values and their relative difference.

\textbf{Note:} The Python script that generated the figures is not included in the arXiv submission due to its length, but it is available from the authors upon request.


\end{document}